\definecolor{black}{rgb}{0.0, 0.0, 0.0}
\definecolor{red}{rgb}{1.0, 0.5, 0.5}
\title[   ]{Criteria on contractions for entropic discontinuities of  systems of conservation laws}
\author[Kang]{Moon-Jin Kang}
\address[Moon-Jin Kang]{\newline Laboratoire Jacques-Louis Lions, 
\newline UPMC F-75005 Paris, France
\newline Department of Mathematics, \newline The University of Texas at Austin, Austin, TX 78712, USA}
\email{moonjinkang@math.utexas.edu}
\author[Vasseur]{Alexis F. Vasseur}
\address[Alexis F. Vasseur]{\newline Department of Mathematics, \newline The University of Texas at Austin, Austin, TX 78712, USA}
\email{vasseur@math.utexas.edu}
\newtheorem{theorem}{Theorem}[section]
\newtheorem{lemma}{Lemma}[section]
\newtheorem{remark}{Remark}[section]
\newtheorem{definition}{Definition}[section]
\newcommand{\bbr}{\mathbb R}
\numberwithin{figure}{section}
\newcommand{\beq}{\begin{equation}}
\newcommand{\eeq}{\end{equation}}
\newcommand{\bsp}{\begin{split}}
\newcommand{\esp}{\end{split}}
\def\eps{\varepsilon }
\newcommand\adots{\mathinner{\mkern2mu\raise1pt\hbox{.}
\mkern3mu\raise4pt\hbox{.}\mkern1mu\raise7pt\hbox{.}}}
\def\charf {\mbox{{\text 1}\kern-.30em {\text l}}}
\begin{document}
\bibliographystyle{plain}

\date{\today}

\subjclass{    } \keywords{contraction, entropic shock, contact discontinuity, rarefaction wave, relative entropy, magnetohydrodynamics, Euler system}

\thanks{\textbf{Acknowledgment.} The work of M.-J. Kang was partially supported by Basic Science Research Program through the National Research Foundation of Korea funded by the Ministry of Education, Science and Technology (NRF-2013R1A6A3A03020506), and by the Foundation Sciences Math$\acute{\mbox{e}}$matiques de Paris as a postdoctoral fellowship.  The work of A. F. Vasseur was partially supported by the NSF Grant DMS 1209420.
}

\begin{abstract}
We study the contraction properties (up to shift) for admissible Rankine-Hugoniot discontinuities of  $n\times n$ systems of conservation laws endowed with a convex entropy. We first generalize the criterion developed in \cite{Serre-Vasseur}, using the spatially inhomogeneous pseudo-distance introduced in \cite{Vasseur-2013}. Our generalized criterion  guarantees the contraction property  for  extremal shocks of a large class of   systems, including the Euler system. Moreover, we introduce necessary conditions for contraction, specifically targeted for   intermediate shocks.  As an application, we show that intermediate shocks of the two-dimensional isentropic magnetohydrodynamics do not verify any of our contraction properties. We also investigate the contraction properties, for contact discontinuities of the Euler system, for a certain range of contraction weights.
 All results do not involve any smallness condition on the initial perturbation, nor on the size of the shock.
\end{abstract}
\maketitle \centerline{\date}


\tableofcontents

\section{Introduction}
\setcounter{equation}{0}
In this paper, we develop criteria for  the existence of contraction properties  of  admissible Rankine-Hugoniot discontinuities (typically entropic shocks and contact discontinuities) of a wide class of systems of $n$ conservation laws endowed with a convex entropy. Consider a $n\times n$ system of conservation laws
\begin{align}
\begin{aligned} \label{main}
&\partial_t u + \partial_x f(u) = 0, \quad t>0,~x\in \bbr,\\
&u(0,x) = u_0(x),
\end{aligned}
\end{align}
endowed with a strictly convex entropy $\eta$.\\ 

For  the scalar case ($n=1$), Kru$\check{\mbox{z}}$kov's theory \cite{K1} shows that the semi-group associated with \eqref{main} is contractive for the $L^1$ norm. For the system case, under small BV perturbation, Bressan, Liu and Yang in \cite{Bressan2, Liu-Yang} constructed a $L^1$ semi-group of solutions.  However, the $L^1$-contraction property does not hold, generally, for  systems (See Temple \cite{Temple}). \\ 

The Kru$\check{\mbox{z}}$kov's semi-group is not contractive in $L^p$ for $p>1$, unless the flux is linear. However, Leger showed in \cite{Leger}, that any perturbation of an entropic shock wave is contractive in $L^2$, up to shift. More precisely, for a strictly convex flux function $f$, and an associated entropic shock $(u_l,u_r,\sigma)$ (i.e., $u_l>u_r$), and for any bounded entropy solution $u$ of this scalar conservation law, there exists a Lipschitz shift $t\mapsto h(t)$ such that 
\[
\int_{\bbr} |u(t,x+h(t))-S(t,x)|^2 dx 
\]
is not increasing in time, where $S(t,x)$ is the traveling wave associated to   $(u_l,u_r,\sigma)$ 
\begin{align}
\begin{aligned}\label{shock-form}
S(t,x)=\left\{ \begin{array}{ll}
         u_l & \mbox{if $  x < \sigma t$},\\
         u_r & \mbox{if $ x> \sigma t$}.\end{array} \right.
\end{aligned}
\end{align}
The shift $h(t)$ depends on the solution $u$. This contraction property can be extended from $L^2$, to any relative entropy $\eta(u|S)$ associated to a convex entropy $\eta$ (see next section). Some extension to $L^p$, for $1<p<\infty$, can be found in Adimurthi, Goshal and Veerappa Gowda \cite{AGV1}.
\vskip0.1cm

In the case of systems ($n>1$), this kind of contraction property has been studied  in \cite{Serre-Vasseur, Vasseur-2013}. In \cite{Serre-Vasseur} the authors developed, in the case of systems,  a criterion for contraction of admissible (Rankine-Hugoniot) discontinuities. Their criterion is  satisfied, for instance,  by the Keyfitz-Kranzer system with a rotationally symmetric flux. However, it  is not applicable to many cases, including the Euler system. In \cite{Vasseur-2013}, the notion of contraction was extended to a family of non-homogenous pseudo-norms, defined for a fixed $a>0$, as
\begin{align}
\begin{aligned}\label{V-distance}
d(u(t,x),S(t,x))=\left\{ \begin{array}{ll}
         \eta (u(t,x)|u_l) & \mbox{if $  x < \sigma t$},\\
         a\eta (u(t,x)|u_r) & \mbox{if $ x>\sigma t$},\end{array} \right.
\end{aligned}
\end{align}
where $S$ is the traveling wave associated to the studied entropic shock. Notice that the case of $a=1$ corresponds to the case studied in  \cite{Serre-Vasseur}.  In \cite{Vasseur-2013}, it is  shown, that any extremal shock (i.e. $1$-shock or $n$-shock) verifies a contraction property, up to a shift, for such pseudo-norms with suitable weights $a>0$.
This pseudo-distance \eqref{V-distance} (determined by the weight $a$) does not depend on on the solution $u$. It depends only on the system and the traveling wave $S$. 
\vskip0.1cm

The purpose of this article is to generalize the criterion developed in \cite{Serre-Vasseur} to the spatially inhomogeneous pseudo-distance introduced  in \cite{Vasseur-2013}. We first apply our generalized criterion to the case of   extremal shocks. Then, we develop criteria specific for intermediate shocks, and intermediate contact discontinuities. 
We present two applications of those criteria. 
\vskip0.1cm

First, we show that intermediate shocks of the two-dimensional isentropic MHD (which is a   $4\times 4$ system), do not verify the contraction property, for any weight $a>0$.  For inviscid and viscous stability issues for the  MHD, we refer to \cite{B-H-Zumbrun, B-L-Zumbrun, G-M-W-Zumbrun, M-Zumbrun}. 
\vskip0.1cm 
For the contact discontinuities of the Euler system, it is shown in \cite{Serre-Vasseur_2015}, that the contraction property holds for the specific value $a=\theta_r/\theta_l$, the ratio of temperatures on the right, and on the left of the contact discontinuity. We show that this cannot hold for a large range  of other weights $a$. 
\vskip0.1cm

Our criteria depend only on  the structure of the system \eqref{main}, and the fixed admissible discontinuity. Contrary to the analysis in  \cite{Vasseur-2013}, it does not involve the study of every solutions $u$, nor the construction of the shift. This simplifies a lot its applicability.  The main difficulty of the analysis of \cite{Vasseur-2013}, for the contraction, is due to the construction of  the suitable shift and weights $a$. 
\vskip0.1cm    

The theory of contraction, based on the relative entropy, is valid for large perturbation, without smallness conditions. We consider any  bounded entropy weak solutions $u$ to \eqref{main} verifying a $BV_{loc}$ property. The  $BV_{loc}$ property is stronger than the strong trace property used in \cite{LV, Vasseur-2013}. All our results still hold under the assumption of the strong trace property instead of $BV_{loc}$. However, following \cite{Serre-Vasseur}, we restrict ourselves to the  $BV_{loc}$ case to simplify the exposition.  Note that the existence theory of  entropy weak solutions to the system \eqref{main} (when $n\ge 3$) for large data is  open. Strong trace property in the case of scalar conservation laws have been widely studied \cite{Chen_trace, DeLellis, Kwon, KV, Panov, Panov2, Vasseur_trace}. However, in the case of systems, the validity of the strong trace property is mainly an open problem. This has been shown only for the particular case of isentropic gas dynamics with $\gamma=3$, for traces in time, in \cite{Vasseur_gamma3}. 
\vskip0.1cm

Our analysis is based on the relative entropy method. It has been first used by Dafermos \cite{Dafermos1} and DiPerna \cite{DiPerna} to show the weak-strong uniqueness and stability of Lipschitz regular solution to conservation laws. (See also \cite{Dafermos4, Dafermos2})
We refer to \cite{CV, Kang-V-1, Leger, LV, Serre-2015, Serre-Vasseur_2015} for  applications of the relative entropy method to the stability of large perturbation in various contexts. This method is also an important tool in the study of asymptotic limits to conservation laws. Applications of the relative entropy method in this context began with the work of Yau \cite{Yau} and have been studied in various context (See for instance \cite{Bardos_Levermore_Golse1, Bardos_Levermore_Golse2, BTV, BV, SaintRaymond1, Kang-V-2, Lions_Masmoudi, Saintraymond3, MV, Saintraymond4}). 
\vskip0.1cm

The rest of the paper is organized as follows. In Section \ref{sec-criteria}, we present the criteria for the contraction of admissible discontinuities, and then identify the necessary conditions for the contraction. As an application of our criteria, in Section \ref{cont-shock}, we prove the contraction property of extremal shocks. In Section \ref{no-cont-shock}, using the necessary condition developed in section \ref{sec-criteria}, we construct two kinds of criteria preventing the contraction property for intermediate entropic shocks, and then present an application to MHD. It turns out that one of the criteria still hold true for intermediate contact discontinuities. In Section \ref{no-cont-inter}, as an application of this criterion to gas dynamics, we find a range of weights, for which there is no contraction for  2-contact discontinuities of the full Euler system.

\section{Preliminaries}
In this section, we present our framework, and basic concepts and properties, needed for the analysis in the following sections. 
\subsection{General framework}
We consider a $n\times n$ system of conservation laws:
\begin{align}
\begin{aligned}\label{main-1}
\partial_t u + \partial_x f(u) = 0, \quad t>0,~x\in \bbr,
\end{aligned}
\end{align}
which is endowed with a strictly convex entropy $\eta$, thus the system is hyperbolic on the state space where $\eta$ exists. Here, the flux $f$, the entropy $\eta$ and associated entropy flux $q$ are assumed to be all defined on an open convex state space $\mathcal{V}\subset \bbr^n$ and of class $\mathcal{C}^2(\mathcal{V})$, and the following compatibility relation holds on $\mathcal{V}$:
\[
\partial_j q =\sum_{i=1}^n \partial_i \eta \partial_j f_i,\quad 1\le j\le n.
\]
which is conventionally rewritten as 
\[\nabla q=\nabla\eta \nabla f, \]
where the matrix $\nabla f$ denotes $(\partial_j f_i)_{i,j}$.\\

As already mentioned, we have in mind the application of our criteria to the gas dynamics. 
For this reason, we need to extend the phase space $\mathcal{V}$ to a suitable subset of the boundary of $\mathcal{V}$, to handle the points corresponding to vacuum states. Thus, we introduce as in \cite{Vasseur_Book}:
\[
\mathcal{U}=\{ u\in \bbr^n~|~\exists u_k\in\mathcal{V},\quad \lim_{k\to\infty} u_k =u,\quad \limsup_{k\to\infty} \eta(u_k) <\infty  \},
\]     
and extend the entropy functional $\eta$ on $\mathcal{U}$ by
\[
\eta(\bar{u})=\liminf_{u\in\mathcal{V}, ~u\to\bar{u}} \eta(u).
\]
In $3\times3$ full Euler system, $\mathcal{V}=(0,\infty)\times \bbr\times (0,\infty)$ denotes a set of non-vacuum states of density, momentum and energy, while $\mathcal{U}=\mathcal{V}\cup \{(0,0,0)\}$ includes the vacuum state $(0,0,0)$. In general case, $\mathcal{U}$ is still convex and $\eta$ is convex on $\mathcal{U}$ (See \cite{Vasseur_Book}). We here restrict our study to bounded entropy solutions $u$ to \eqref{main-1}, whose values are in a convex bounded subset $\mathcal{U}_K\subset \mathcal{U}$, on which the functions $f$, $\eta$ and $q$ are continuous.

\subsection{Relative entropy}
For the strictly convex entropy $\eta$ of $\eqref{main-1}$, we define the relative entropy function by
\[
\eta(u|v)=\eta(u)-\eta(v) -\nabla\eta(v)\cdot(u-v),
\]
for any $u\in \mathcal{U}, v\in \mathcal{V}$.\\
Since $\eta$ is convex on $\mathcal{U}$ and strictly convex in $\mathcal{V}$, we have (see \cite{Vasseur_Book})
\[
\eta(u|v)\ge 0,\quad u\in \mathcal{U}, v\in \mathcal{V},
\]
and
\[
\eta(u|v) =0\quad \Longleftrightarrow \quad u=v.
\]
Thus, the relative entropy $\eta(u|v)$ is positive-definite and convex in the first variable $u$. However it looses the symmetry unless $\eta(u)=|u|^2$. Nevertheless the relative entropy is comparable to the square of $L^2$ distance on any bounded subset of $\mathcal{U}$ as follows.
\begin{lemma}\label{lem-L2}
For any bounded set $B\subset \mathcal{U}$ and compact set $\Omega\subset  \mathcal{V}$, there exists $C_1, C_2>0$ depending on $B$ and $\Omega$ such that for any $u\in B$ with $v\in\Omega$,
\[
C_1|u-v|^2 \le \eta(u|v) \le C_2 |u-v|^2.
\]
\end{lemma}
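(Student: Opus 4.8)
The plan is to prove the two-sided bound by splitting according to whether $u$ is close to or far from $v$, so as to keep the Hessian argument away from $\partial\mathcal{V}$ (the vacuum states), where $\nabla^2\eta$ may degenerate or blow up.

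First I would handle the regime near the diagonal. Since $\Omega$ is a compact subset of the open set $\mathcal{V}$, fix $\delta_0>0$ so small that the closed neighborhood $\Omega_{\delta_0}:=\{w:\mathrm{dist}(w,\Omega)\le \delta_0\}$ is a compact subset of $\mathcal{V}$. For $v\in\Omega$ and $u$ with $|u-v|\le\delta_0$, the point $u$ and the whole segment $\{v+s(u-v):s\in[0,1]\}$ then lie in $\Omega_{\delta_0}\subset\mathcal{V}$, so $u$ is automatically a non-vacuum state and Taylor's formula with integral remainder gives
\[
\eta(u|v)=\int_0^1 (1-s)\,(u-v)^{\top}\nabla^2\eta\big(v+s(u-v)\big)(u-v)\,ds.
\]
Because $\eta\in\mathcal{C}^2(\mathcal{V})$ is strictly convex, $\nabla^2\eta$ is continuous and positive definite on the compact set $\Omega_{\delta_0}$, so its eigenvalues are bounded by constants $0<\lambda_-\le\lambda_+<\infty$ depending only on $\Omega$ and $\delta_0$. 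Using $\int_0^1(1-s)\,ds=\tfrac12$, this yields $\tfrac{\lambda_-}{2}|u-v|^2\le \eta(u|v)\le \tfrac{\lambda_+}{2}|u-v|^2$ whenever $|u-v|\le\delta_0$, which already gives both inequalities in this range.

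Next I would treat the complementary, far-from-diagonal regime on the compact set $K:=\{(u,v)\in\overline{B}\times\Omega:\ |u-v|\ge\delta_0\}$. On $K$ the map $(u,v)\mapsto \eta(u|v)=\eta(u)-\eta(v)-\nabla\eta(v)\cdot(u-v)$ is continuous, using that $\eta$ is continuous on $\overline{B}\subset\mathcal{U}$ (the standing restriction to a set $\mathcal{U}_K$ on which $\eta$ is continuous) and that $\eta,\nabla\eta$ are continuous on the compact set $\Omega\subset\mathcal{V}$. Hence $\eta(u|v)$ attains a maximum $M$ and a minimum $m$ on $K$; since $|u-v|\ge\delta_0>0$ forces $u\ne v$, the positive-definiteness recalled before the lemma gives $m>0$. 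With the diameter bound $D:=\sup\{|u-v|^2:u\in\overline{B},\,v\in\Omega\}<\infty$ (finite because $B$ and $\Omega$ are bounded) and $|u-v|^2\ge\delta_0^2$ on $K$, I obtain $\tfrac{m}{D}|u-v|^2\le \eta(u|v)\le \tfrac{M}{\delta_0^2}|u-v|^2$ there. Taking $C_1=\min(\lambda_-/2,\,m/D)$ and $C_2=\max(\lambda_+/2,\,M/\delta_0^2)$ then proves the claim for all $u\in B$, $v\in\Omega$.

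The main obstacle is precisely the behaviour of $\eta$ near $\partial\mathcal{V}$: the Hessian $\nabla^2\eta$ need not be controlled, from above or below, up to the vacuum boundary, so a single global Taylor estimate is unavailable. The splitting is what resolves this — it confines the quantitative Hessian argument to the compact interior set $\Omega_{\delta_0}\subset\mathcal{V}$ (where, in that regime, both $v$ and $u$ stay strictly away from the boundary) and replaces it, away from the diagonal, by a soft compactness-and-positivity argument that needs only the continuity of $\eta$ on $\overline{B}$ and the strict positivity $\eta(u|v)>0$ for $u\ne v$. I expect the one point demanding genuine care to be the continuity of $\eta(u|v)$ up to $\overline{B}$, i.e.\ including boundary/vacuum states; this is immediate under the paper's standing assumption that solution values lie in a set $\mathcal{U}_K$ on which $f,\eta,q$ are continuous.
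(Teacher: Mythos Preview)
The paper does not actually prove this lemma; it simply states that ``The proof of this lemma can be found in \cite{LV, Vasseur_Book}.'' Your argument is therefore the only proof on the table, and it is essentially correct and follows the standard route: a Taylor-with-remainder Hessian estimate on a compact thickening $\Omega_{\delta_0}\subset\mathcal{V}$ handles the near-diagonal regime, and a compactness/strict-positivity argument handles the far-from-diagonal regime. This is exactly the decomposition one finds in the references the paper cites.

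Two small points of care, which you already flagged in part. First, your far-from-diagonal step passes to $\overline{B}$; the lemma only assumes $B\subset\mathcal{U}$ is bounded, and in general $\overline{B}$ need not lie in $\mathcal{U}$, nor need $\eta$ be continuous up to $\overline{B}$. You resolve this by invoking the paper's standing convention that one works inside a set $\mathcal{U}_K$ on which $\eta$ is continuous; that is the correct reading, and it matches how the lemma is actually applied in the paper (always with $B$ a compact subset of $\mathcal{V}$, e.g.\ $B_{\varepsilon_0}(u_l)$ or $R_a$). Second, for the lower bound on $K$ you only need lower semicontinuity of $(u,v)\mapsto\eta(u|v)$, which already follows from the $\liminf$ extension of $\eta$ and the smoothness of $\eta,\nabla\eta$ on $\Omega$; it is only the upper bound that genuinely needs continuity of $\eta$ in $u$. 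Making that asymmetry explicit would sharpen the write-up.
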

The proof of this lemma can be found in \cite{LV, Vasseur_Book}. Notice that this lemma also holds for all $(u,v)\in \Omega^2$, for any compact set $\Omega\in \mathcal{V}$.\\

As mentioned in Introduction, we are interested in studying the contraction property of a bounded entropy weak solution as any perturbation of admissible Rankine-Hugoniot discontinuity.\\
We say that $u$ is an entropy (weak) solution of \eqref{main} if a weak solution $u$ to \eqref{main} satisfies the entropy inequality
\beq\label{e-ineq}
\partial_t \eta(u) + \partial_x q(u) \le 0,
\eeq
in the sense of distributions. On the other hand, the equality above holds when $u$ is a Lipschitz solution to \eqref{main}.\\

For given $u_l\neq u_r$, we say that $(u_l, u_r, \sigma)$ is an admissible Rankine-Hugoniot discontinuity if there exists $\sigma\in\bbr$ such that
\begin{align}
\begin{aligned} \label{f-q}
&f(u_r)-f(u_l) = \sigma (u_r - u_l)\\
& q(u_r) - q(u_l) \le \sigma (\eta(u_r)- \eta(u_l)). 
\end{aligned}
\end{align}
Equivalently, this means that the discontinuous function $u(t,x)$ defined by
\begin{align*}
\begin{aligned}
u(t,x)=\left\{ \begin{array}{ll}
         u_l & \mbox{if $  x < \sigma t$},\\
         u_r & \mbox{if $ x>\sigma t$},\end{array} \right.
\end{aligned}
\end{align*}
is an entropy weak solution to \eqref{main}.\\

For any constant vector $v\in \bbr^n$, if $u$ is an entropic weak solution of \eqref{main}, then $\eta(u|v)$ is a solution in the sense of distributions to
\beq\label{relative-ineq}
\partial_t \eta(u|v) + \partial_x q(u,v) \le 0,
\eeq
where $q(u,v)$ is the relative entropy flux defined by
\[
q(u,v)=q(u)-q(v) -\nabla\eta(v)\cdot(f(u)-f(v)).
\]
This can be derived directly from \eqref{main} and \eqref{e-ineq}.\\

\subsection{Spatially inhomogeneous pseudo-distance}
For a given weight $a>0$, using the relative entropy, we consider the pseudo-distance $d$ by
\begin{align*}
\begin{aligned}
d(u(t,x),S(t,x))=\left\{ \begin{array}{ll}
         \eta (u(t,x)|u_l) & \mbox{if $  x < \sigma t$},\\
         a\eta (u(t,x)|u_r) & \mbox{if $ x>\sigma t$},\end{array} \right.
\end{aligned}
\end{align*}
where $S(t,x)$ denotes the fixed shock $(u_l,u_r,\sigma)$. This pseudo-distance is spatially inhomogeneous for $a\neq 1$.
Based on this pseudo-distance, it has been shown in \cite{Vasseur-2013} that there exists suitable weight $a>0$ such that
contraction of extremal shocks holds up to Lipschitz shift $\alpha(t)$ in the spatially inhomogeneous pseudo-distance:
\[
 \int_{-\infty}^{\infty} d(u(t,x+\alpha(t)),S(t,x)) dx,
\]
which is equal to
\[
\int_{-\infty}^{\alpha(t)+\sigma t} \eta(u(t,x)|u_l) dx + a\int_{\alpha(t)+\sigma t}^{\infty} \eta(u(t,x)|u_r) dx.
\]
From now on, we study the contraction properties by using this pseudo-distance denoted by \eqref{E_a} as 
\beq\label{E_a}
E_a(u(t),h(t)):= \int_{-\infty}^{h(t)} \eta(u(t,x)|u_l) dx + a\int_{h(t)}^{\infty} \eta(u(t,x)|u_r) dx.
\eeq
This pseudo-distance \eqref{E_a} (determined by the weight $a$) does not depend on $\mathcal{U}_K$. That is, it does not depend on any quantitative property of bounded entropy weak solution perturbed from admissible discontinuities. On the other hand, the shift $h(t)$ depends on the perturbation and is estimated by 
\[
|h^{\prime}(t)|\le C_K,\quad |h(t)-\sigma t|\le C_K \sqrt{t} \|u_0 -S\|_{L^2},
\]
where $C_K$ is a constant depending on $\mathcal{U}_K$.\\

\section{Entropy criteria for contractions}\label{sec-criteria}
In this section, we present a general theory for contraction of admissible discontinuity for any characteristic fields. First of all, we generalize the criteria developed in \cite{Serre-Vasseur} via the spatially inhomogeneous pseudo-distance \eqref{E_a}. We then give the necessary condition for the contraction property in Theorem \ref{thm-cont-2}. Following the heuristic observation of Serre and Vasseur in \cite{Serre-Vasseur}, we consider the generalized conditions for contraction as follows.   
\begin{definition}\label{def-res}
For a positive constant $a>0$, we say that an entropic Rankine-Hugoniot discontinuity $(u_l, u_r)$ is relative entropy stable with respect to weight a (in short, a-RES) if $(u_l, u_r)$ satisfies the following entropy conditions:
\begin{align*}
\begin{aligned} 
&\bullet (\mathcal{H}1):~\mbox{For any}~ u ~\mbox{in}~ \Sigma_a:=\{ u ~|~ \eta(u|u_l) = a\eta(u|u_r) \},\\
&\hspace{1.5cm} D_{sm}(u_{l,r};u):= a q(u,u_r) -q(u,u_l) \le 0.\\
&\bullet (\mathcal{H}2):~\mbox{For any entropic discontinuity} ~(u_-, u_+) ~\mbox{of speed}~ \sigma_{\pm}~ \mbox{satisfying}\\
&\hspace{1.5cm} \eta(u_-|u_l) < a \eta(u_-|u_r) ~\mbox{and}~ \eta(u_+|u_l) > a \eta(u_+|u_r),\\
&\hspace{1.5cm} D_{RH}(u_{l,r},u_{\pm}):= a q(u_+,u_r) - q(u_-,u_l) - \sigma_{\pm} ( a\eta(u_+|u_r) - \eta(u_-|u_l))\le 0.  
\end{aligned}
\end{align*}
As a variant of {\it{a-RES}}, we say that  an entropic Rankine-Hugoniot discontinuity $(u_l, u_r)$ is {\it{strongly relative entropy stable with respect to weight a}} (in short, a-SRES) if $(u_l, u_r)$ satisfies $(\mathcal{H}1)$ and a slightly stronger condition $(\mathcal{H}2^{*})$ than $(\mathcal{H}2)$ as follows:
\begin{align*}
\begin{aligned} 
&\bullet (\mathcal{H}2^*):~\mbox{For any entropic discontinuity} ~(u_-, u_+) ~\mbox{of speed}~ \sigma_{\pm}~ \mbox{such that}\\
&\hspace{1.5cm} \eta(u_-|u_l) - a \eta(u_-|u_r)~\mbox{and}~ a \eta(u_+|u_r)-\eta(u_+|u_l)~\mbox{have the same sign},\\
&\hspace{1.5cm} D_{RH}(u_{l,r},u_{\pm})\le 0.  
\end{aligned}
\end{align*}
\end{definition}

\begin{remark}
1. For the unit weight $a=1$, the meaning of {\it{1-SRES}} is the exactly same as one of the terminology {\it{`RES'}} used in \cite{Serre-Vasseur}. Thus, the above definition of {\it{a-SRES}} is a generalization of {\it{`RES'}}.\\
2. The condition that $\eta(u_-|u_l) - a \eta(u_-|u_r)$ and $a \eta(u_+|u_r)-\eta(u_+|u_l)$ have the same sign has the same meaning as that $u_-$ and $u_+$ are separated by the $(n-1)$-dimensional surface $\Sigma_a$. Note that the convexity and $C^2$-regularity of $\eta$ implies the convexity and $C^2$-regularity of 
\[
R:=\{ u ~|~ \eta(u|u_l) \le a\eta(u|u_r) \},
\]
thus $C^2$-regularity of surface $\Sigma_a = \partial R$.
\end{remark}
\begin{definition}
For a positive constant $a>0$, we say that an entropic Rankine-Hugoniot discontinuity $(u_l, u_r, \sigma)$ satisfies $a$-contraction if for any bounded convex subset $\mathcal{U}_K$ of $\mathcal{U}$, there exists a constance $C_K$ (depending on $\mathcal{U}_K$) such that the following holds true. For any entropy weak solution $u\in \mathcal{U}_K \cap BV_{loc}((0,T)\times\bbr)^n$ (with possibly $T=\infty$) of the system \eqref{main} with initial data $u_0$ satisfying $E(u_0,0)<\infty$, there exists a Lipschitz function $h(t)$ with $h(0)=0$ such that the pseudo distance $E_a(u(t),h(t))$ is non increasing in time, i.e.,
\beq\label{cont}
E_a(u(t),h(t)) \le E_a(u(s),h(s)), \quad \mbox{a.e.}~ t>s>0,
\eeq
moreover, for every $0<t<T$,
\beq\label{h_est}
|h^{\prime}(t)|\le C_K,\quad |h(t)-\sigma t|\le C_K \sqrt{t} \|u_0 -S\|_{L^2},
\eeq
where $S(x)=u_l$ for $x<0$ and $S(x)=u_r$ for $x>0$. 
\end{definition}
\begin{remark}
As already mentioned, the weight $a>0$ defines the pseudo distance $E_a(u(t),h(t))$ for the contraction, which does not depend on any quantitative property of the perturbation, i.e., $\mathcal{U}_K$, but only depends on the system \eqref{main} and the fixed discontinuity $(u_l, u_r, \sigma)$. On the other hand, the boundedness of the set $\mathcal{U}_K$ is for the control of Lipschitz shift $h(t)$ such as \eqref{h_est}. Indeed by Lemma \ref{lem-L2}, $E_a(u(t),h(t))$ is equivalent to $\|u(t,\cdot+h(t)) -S\|_{L^2}$, thus the contraction property \eqref{cont} implies 
\[
\|u(t,\cdot+h(t)) -S\|_{L^2} \le C_K  \|u_0 -S\|_{L^2}.
\]
This induces the estimate \eqref{h_est}. Its proof can be found in \cite{Vasseur-2013}. Therefore, we will not mention about \eqref{h_est} in the sequel.
\end{remark}

\subsection{The {\it{a-SRES}} implies the $a$-contraction} 
We here show that for a given weight $a>0$, the {\it{a-SRES}} is a sufficient condition for the $a$-contraction. This is a generalization of the main theorem (corresponding to $a=1$) in \cite{Serre-Vasseur}.
\begin{theorem}\label{thm-cont-1}
For $a>0$, if the entropic discontinuity $(u_l, u_r)$ is {\it{a-SRES}}, then $(u_l, u_r)$ satisfies $a$-contraction.

\end{theorem}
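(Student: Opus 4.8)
The goal is to show that if the discontinuity $(u_l,u_r,\sigma)$ is \emph{a-SRES}, then the quantity $E_a(u(t),h(t))$ is non-increasing in time for a suitable Lipschitz shift $h$. I would follow the scheme of Leger and Serre--Vasseur, adapted to the weighted pseudo-distance $E_a$. The starting point is the relative entropy inequality: since $u$ is an entropy weak solution and $u_l,u_r$ are constants, $\eta(u|u_l)$ and $\eta(u|u_r)$ each satisfy \eqref{relative-ineq} with the relative entropy fluxes $q(u,u_l)$ and $q(u,u_r)$. Formally differentiating $E_a(u(t),h(t))$ in time and using these inequalities together with the Rankine-Hugoniot jumps of $u$ across $x=h(t)$, one finds that
\[
\frac{d}{dt}E_a(u(t),h(t)) \le h'(t)\bigl(a\,\eta(u_+|u_r)-\eta(u_-|u_l)\bigr) - \bigl(a\,q(u_+,u_r)-q(u_-,u_l)\bigr),
\]
where $u_\pm$ denote the one-sided traces of $u$ at $x=h(t)$; the $BV_{loc}$ assumption is exactly what gives these traces a classical meaning. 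The task is to choose $h'(t)$ so that the right-hand side is $\le 0$.

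**The dichotomy and the shift.** The core of the argument is a case analysis on the position of the traces $u_\pm$ relative to the surface $\Sigma_a$. First I would dispose of the simplest situation: if the traces do not straddle $\Sigma_a$ --- that is, if $\eta(u_-|u_l)-a\eta(u_-|u_r)$ and $a\eta(u_+|u_r)-\eta(u_+|u_l)$ have the same sign --- then by $(\mathcal{H}2^*)$ the Rankine-Hugoniot dissipation $D_{RH}(u_{l,r},u_\pm)\le 0$, and one may simply keep $h'(t)=\sigma_\pm$ (the local shock speed), which makes the first term equal to $D_{RH}$ minus the flux term and hence the whole right-hand side equal to $D_{RH}\le 0$. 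The delicate case is when the traces lie on opposite sides of $\Sigma_a$; here I would define $h'(t)$ by a selection mechanism --- choosing the speed that balances the boundary terms, solving $h'(t)\bigl(a\eta(u_+|u_r)-\eta(u_-|u_l)\bigr)=a q(u_+,u_r)-q(u_-,u_l)$ when the bracket is nonzero --- and then invoke the continuous trace $u_-=u_+=u_*\in\Sigma_a$, so that $(\mathcal{H}1)$ supplies $D_{sm}(u_{l,r};u_*)=aq(u_*,u_r)-q(u_*,u_l)\le 0$. To make this precise one constructs $h$ as the solution of an ODE whose right-hand side is a bounded measurable (Filippov-type) velocity field, guaranteeing a Lipschitz $h$ with $|h'|\le C_K$; this is where one borrows the construction from \cite{Leger, Vasseur-2013}.

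**Making the formal computation rigorous.** The step requiring the most care is turning the heuristic time-derivative computation into a rigorous statement, since $u$ is only $BV_{loc}$ and the shift $h$ is merely Lipschitz. I would proceed by testing the distributional inequalities for $\eta(u|u_l)$ and $\eta(u|u_r)$ against approximations of the characteristic functions of the half-lines $\{x<h(t)\}$ and $\{x>h(t)\}$, then passing to the limit using the strong (one-sided) traces provided by $BV_{loc}$. This yields the inequality on $\frac{d}{dt}E_a$ in the sense of distributions in $t$, from which \eqref{cont} follows after integration. The shift estimates \eqref{h_est} follow from $|h'|\le C_K$ and the $L^2$-equivalence of Lemma \ref{lem-L2}, exactly as in \cite{Vasseur-2013}, so I would only cite them.

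**Main obstacle.** I expect the principal difficulty to be the construction and measurability of the shift $h$ in the straddling case, together with verifying that the chosen velocity is bounded uniformly (giving $|h'|\le C_K$). The algebraic dissipation bounds are handed to us by $(\mathcal{H}1)$ and $(\mathcal{H}2^*)$; the real work is the Filippov-style selection of $h'(t)$ that makes the boundary term non-positive \emph{measurably in time} while respecting the two regimes, and proving that the resulting pseudo-distance is genuinely monotone rather than merely formally so.
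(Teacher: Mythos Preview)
Your overall architecture is right --- differentiate $E_a$, reduce to a boundary term at $x=h(t)$, and construct $h$ via a Filippov ODE --- and it matches the paper's approach. But the case analysis is garbled, and one case is genuinely missing.

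First a labeling issue: the condition ``$\eta(u_-|u_l)-a\eta(u_-|u_r)$ and $a\eta(u_+|u_r)-\eta(u_+|u_l)$ have the same sign'' is precisely the hypothesis of $(\mathcal{H}2^*)$, and geometrically it means $u_-$ and $u_+$ lie on \emph{opposite} sides of $\Sigma_a$, i.e.\ they \emph{do} straddle. You have this backwards, though you apply $(\mathcal{H}2^*)$ correctly to it. The complementary case is therefore the one where $u_-$ and $u_+$ lie on the \emph{same} side of $\Sigma_a$, and here your treatment breaks down: you immediately ``invoke the continuous trace $u_-=u_+=u_*\in\Sigma_a$'', but nothing forces $u_-=u_+$, nor that either trace lies on $\Sigma_a$. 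When $u_-\neq u_+$ are both strictly on the same side, $(\mathcal{H}1)$ is inapplicable and $(\mathcal{H}2^*)$ gives nothing; the paper handles this case separately by combining the entropy inequality for the discontinuity $(u_-,u_+,\dot h)$ with the Filippov inclusion $\dot h\in I(V_\varepsilon(u_-),V_\varepsilon(u_+))$ to reduce to a single trace, and then uses the explicit form of $V_\varepsilon$.

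Second, and related: the velocity you implicitly propose, $V(u)=\dfrac{aq(u,u_r)-q(u,u_l)}{a\eta(u|u_r)-\eta(u|u_l)}$, is not bounded near $\Sigma_a$. Hypothesis $(\mathcal{H}1)$ only says the numerator is $\le 0$ on $\Sigma_a$, not that it vanishes there, so $V$ can blow up and the Filippov construction fails. The paper repairs this with an $\varepsilon$-regularization $V_\varepsilon(u)=\dfrac{[\,aq(u,u_r)-q(u,u_l)-\varepsilon\,]_+}{a\eta(u|u_r)-\eta(u|u_l)}$, which vanishes in a neighborhood of $\Sigma_a$ (precisely because of $(\mathcal{H}1)$) and is therefore Lipschitz; this produces a contraction up to $\varepsilon t$, and one passes to the limit $\varepsilon\to 0$. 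This regularization is not a cosmetic device: it is exactly where $(\mathcal{H}1)$ enters the construction of $h$, not merely the verification at points of $\Sigma_a$. Finally, note that one does not ``choose $h'=\sigma_\pm$''; once $h$ solves the Filippov ODE, the fact that $(u_-,u_+,\dot h)$ is an entropic discontinuity whenever $u_-\neq u_+$ is an automatic consequence (the paper's Lemma~\ref{lem-h}), not an input.
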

\begin{proof}
The proof is almost same as that of the main theorem in \cite{Serre-Vasseur}. For the reader's convenience, we give a variant of the proof of Theorem 2.1 in \cite{Serre-Vasseur}. For given $\eps>0$, we define a function $V_{\eps}:\bbr^n \rightarrow \bbr$ by
\begin{align}
\begin{aligned}\label{V-def}
V_{\eps}(u)=\left\{ \begin{array}{ll}
        \displaystyle\frac{[a q(u|u_r) - q(u|u_l) -\eps ]_+}{a \eta(u|u_r) - \eta(u|u_l)} & \mbox{if $u \notin \Sigma_a$},\\
         0 & \mbox{if $u \in \Sigma_a$},\end{array} \right.
\end{aligned}
\end{align}  
where $\Sigma_a=\{ u ~|~ \eta(u|u_l) = a\eta(u|u_r) \}$.\\
Since $(u_{l}, u_{r})$ is {\it{a-SRES}}, $V_{\eps}$ is Lipschitz on $\bbr^n$. Indeed, that is true by the continuity of $q$ and the definition of {\it{a-SRES}}. More precisely, since $a q(u|u_r) - q(u|u_l) -\eps \le -\eps$ for $u\in \Sigma_a$, $V_{\eps} = 0$ on a neighborhood of $\Sigma_a$.\\
We now consider an entropic weak solution $u\in L^{\infty}((0,\infty)\times\bbr)^n \cap BV_{loc}((0,\infty)\times\bbr)^n$ with $E(u_0,0)<\infty$. Then we define an approximated curve $h_{\eps}$ as a solution to the ODE
\begin{align}
\begin{aligned}\label{ode} 
& \dot{h}_{\eps}(t) = V_{\eps} (u(t,h_{\eps}(t))),\\
& h_{\eps}(0)=0,
\end{aligned}
\end{align} 
in the Filippov sense \cite{Filippov}.\\
For solvability of \eqref{ode}, we have the following lemma as in \cite{Serre-Vasseur}:
\begin{lemma}\label{lem-h}
There exists a Lipschitz solution $h_{\eps}$ to \eqref{ode} such that
\begin{align}
\begin{aligned}\label{Set-v} 
& \|\dot{h}_{\eps}\|_{L^{\infty}} \le \|V_{\eps}\|_{L^{\infty}},\\
& \dot{h}_{\eps}(t) \in I(V_{\eps} (u_-) ,V_{\eps} (u_+)),\quad \mbox{a.e.}~t>0,
\end{aligned}
\end{align}
where $u_{\pm}:=u(t,h_{\eps}(t)\pm)$, and $I(a,b)$ denotes the interval with endpoints $a$ and $b$. Moreover, if $u_- \neq u_+$, the $(u_-, u_+, \dot{h}_{\eps})$ is an admissible entropic discontinuity, that is, 
\begin{align}
\begin{aligned}\label{add-ent} 
&f(u_+)-f(u_-) = \dot{h}_{\eps} (u_+ - u_-)\\
& q(u_+) - q(u_-) \le \dot{h}_{\eps} (\eta(u_+)- \eta(u_-)),\quad \mbox{a.e.}~t>0.
\end{aligned}
\end{align}
\end{lemma} 
The proof of this lemma is the exactly same as that in \cite{Serre-Vasseur}, because its proof only need the regularity of $u$ and $V_{\eps}$, but not the definition of $V_{\eps}$ itself. Indeed in \cite{Leger, LV, Serre-Vasseur}, it is shown that \eqref{ode} has a solution satisfying \eqref{Set-v}. We refer to \cite{Serre-Vasseur} for details of its proof.\\
We use \eqref{relative-ineq} to derive that the entropic weak solution $u\in BV_{loc}((0,\infty)\times\bbr)^n$ satisfies
\begin{align*}
\begin{aligned} 
\frac{d}{dt}E_a(u(t),h_{\eps}(t))&= \int_{-\infty}^{h_{\eps}(t)} \partial_t\eta(u(t,x)|u_l) dx + a\int_{h_{\eps}(t)}^{\infty} \partial_t \eta(u(t,x)|u_r) dx\\
&\quad +\dot{h}_{\eps}(t) \Big( \eta(u(t,h(t)-)|u_l) - a \eta(u(t,h(t)+)|u_r) \Big)\\
&\le -\int_{-\infty}^{h_{\eps}(t)} \partial_x q(u(t,x)|u_l) dx - a\int_{h_{\eps}(t)}^{\infty} \partial_x q(u(t,x)|u_r) dx\\
&\quad +\dot{h}_{\eps}(t) \Big( \eta(u(t,h(t)-)|u_l) - a \eta(u(t,h(t)+)|u_r) \Big)\\
&\le aq(u_+|u_r) - q(u_-|u_l) -\dot{h}_{\eps}(t) (a\eta(u_+ | u_r)-\eta(u_-|u_l))\\
& =: D_{\eps},
\end{aligned}
\end{align*}
where $u_{\pm}:=u(t,h_{\eps}(t)\pm)$. Let us show that $D_{\eps} \le 0$.\\
For a.e. $t>0$ such that $u_-=u_+$, by \eqref{Set-v}, we have
\[
\dot{h}_{\eps}(t) = V_{\eps} (u_{\pm}), 
\]
which implies together with \eqref{V-def} that
\begin{align*}
\begin{aligned} 
D_{\eps}&=aq(u_+|u_r) - q(u_-|u_l) -V_{\eps} (u_{\pm}) (a\eta(u_+ | u_r)-\eta(u_-|u_l))\\
& = aq(u_+|u_r) - q(u_-|u_l) -[aq(u_+|u_r) - q(u_-|u_l) - \eps]_+ \\
&\le \eps.
\end{aligned}
\end{align*}
On the other hand, for a.e. $t>0$ such that $u_-\neq u_+$, there are two cases as follows:
\begin{align*}
\begin{aligned} 
&i)~\eta(u_-|u_l) - a \eta(u_-|u_r)~\mbox{and}~ a \eta(u_+|u_r)-\eta(u_+|u_l) ~\mbox{have the same sign,}\\
&ii)~a \eta(u_-|u_r)-\eta(u_-|u_l) ~\mbox{and}~ a \eta(u_+|u_r)-\eta(u_+|u_l) ~\mbox{have the same sign.}
\end{aligned}
\end{align*}
Concerning the first case $i)$, we use the fact that $(u_l, u_r)$ satisfies $(\mathcal{H}2^*)$ and $(u_-, u_+, \dot{h}_{\eps})$ is the entropic discontinuity by \eqref{add-ent}, then it follows from $(\mathcal{H}2^*)$ that
\[
D_{\eps} = D_{RH}(u_{l,r},u_{\pm}) \le 0,
\] 
where note that $\sigma_{\pm}=\dot{h}_{\eps}$.\\
For the second case $ii)$, since $a \eta(u_-|u_r)-\eta(u_-|u_l)$, $a \eta(u_+|u_r)-\eta(u_+|u_l)$, $ V_{\eps} (u_-)$ and $ V_{\eps} (u_+)$ have the same sign, thus $\dot{h}_{\eps} \in I(V_{\eps} (u_-) ,V_{\eps} (u_+))$ also has the same sign. This implies together with \eqref{add-ent} that for both $v=u_-$ and $v=u_+$,
\begin{align*}
\begin{aligned} 
D_{\eps}&\le aq(v|u_r) - q(v|u_l) -\dot{h}_{\eps} (a\eta(v | u_r)-\eta(v|u_l))\\
&=aq(v|u_r) - q(v|u_l) -|\dot{h}_{\eps}| |a\eta(v | u_r)-\eta(v|u_l)|.
\end{aligned}
\end{align*}
If we consider $v$ satisfying $| V_{\eps} (v)|=\mbox{inf}(| V_{\eps} (u_-)|, | V_{\eps} (u_+)|)$, since $|\dot{h}_{\eps}|\ge| V_{\eps} (v)|$ by \eqref{Set-v}, we have
\begin{align*}
\begin{aligned} 
D_{\eps}&\le aq(v|u_r) - q(v|u_l) -| V_{\eps} (v)|  |a\eta(v | u_r)-\eta(v|u_l)|\\
&= aq(v|u_r) - q(v|u_l) - V_{\eps} (v) (a\eta(v | u_r)-\eta(v|u_l))\\
&= aq(v|u_r) - q(v|u_l) - [aq(v|u_r) - q(v|u_l) -\eps]_+\\
&\le \eps.
\end{aligned}
\end{align*}
Therefore, it follows from the estimates above that for a.e. $t>s>0$,
\[
E_a(u(t),h_{\eps}(t)) \le E_a(u(s),h_{\eps}(s))  + (t-s)\eps.
\]
Since $\|V_{\eps}\|_{L^{\infty}}$ is uniformly bounded with respect to $\eps$ thanks to $u\in L^{\infty}((0,\infty)\times\bbr)^n$ and \eqref{V-def}, by \eqref{Set-v}, $\dot{h}_{\eps}$ is also uniformly bounded with respect to $\eps$. Thus up to a subsequence, $h_{\eps}$ uniformly converges to a Lipschitz function $h$. Hence we conclude that a.e. $t>s>0$,
\[
E_a(u(t),h(t)) \le E_a(u(s),h(s)). 
\]
\end{proof}

\subsection{The $a$-contraction implies the {\it{a-RES}}} 
The following theorem says that the {\it{a-RES}} is the necessary condition for the $a$-contraction. 
\begin{theorem}\label{thm-cont-2}
For $a>0$, if the entropic discontinuity $(u_l, u_r)$ satisfies the $a$-contraction, then $(u_l, u_r)$ is the {\it{a-RES}} discontinuity.
\end{theorem}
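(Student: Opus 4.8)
The plan is to establish the two defining conditions of $a$-RES, namely $(\mathcal{H}1)$ and $(\mathcal{H}2)$, separately and by contradiction: assuming one of them fails, I will build an explicit entropy weak solution whose pseudo-distance $E_a$ strictly increases along \emph{every} admissible shift, which contradicts the $a$-contraction. The engine behind both parts is the elementary identity, valid whenever $u$ is a classical solution on each side of a (possibly trivial) discontinuity located at the shift $h(t)$,
\[
\frac{d}{dt}E_a(u(t),h(t)) = \dot h(t)\big(\eta(u(t,h(t)-)|u_l) - a\eta(u(t,h(t)+)|u_r)\big) + a q(u(t,h(t)+),u_r) - q(u(t,h(t)-),u_l),
\]
obtained exactly as in the proof of Theorem~\ref{thm-cont-1} by using the relative-entropy equality $\partial_t\eta(u|v) = -\partial_x q(u,v)$ on the smooth pieces and the vanishing of the fluxes $q(u_l,u_l)=q(u_r,u_r)=0$ at $\pm\infty$.

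For $(\mathcal{H}1)$, fix $u^*\in\Sigma_a$ and suppose $D_{sm}(u_{l,r};u^*)>0$. I take smooth, compactly perturbed initial data equal to $u_l$ for $x\le -2L$, to $u_r$ for $x\ge 2L$, and identically $u^*$ on a neighborhood $(-\delta,\delta)$ of the origin, so that $E_a(u_0,0)<\infty$. Since the strictly convex entropy makes the system symmetrizable, a classical solution exists on a short time interval, and finite propagation speed keeps $u\equiv u^*$ on a slightly smaller interval $(-\delta',\delta')$; as the contraction shift satisfies $|h(t)|\le C_K t$, it stays inside this plateau for small $t$, whence $u(t,h(t))=u^*$. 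Because $u^*\in\Sigma_a$ annihilates the $\dot h$-term in the identity, its right-hand side reduces to $D_{sm}(u_{l,r};u^*)$ independently of $\dot h$, so $E_a(u(t),h(t))$ grows at the strictly positive rate $D_{sm}(u_{l,r};u^*)$, contradicting \eqref{cont}. Hence $D_{sm}\le 0$ and $(\mathcal{H}1)$ holds.

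For $(\mathcal{H}2)$, fix an entropic discontinuity $(u_-,u_+,\sigma_\pm)$ with $u_-\in\mathrm{int}\,R$ and $u_+\notin R$ (these are precisely the two sign conditions, since $R$ is convex as observed after Definition~\ref{def-res}), and suppose $D_{RH}(u_{l,r},u_\pm)>0$. The decisive structural fact is that $h\mapsto E_a(u(t),h)$ has derivative $\eta(u(t,h)|u_l)-a\eta(u(t,h)|u_r)$, which is negative wherever the solution lies in $\mathrm{int}\,R$ and positive wherever it lies in $R^c$. I therefore build a test solution consisting of the single admissible shock $u_-\mid u_+$ along a front $X(t)$ with $X(0)=0$ and $\dot X(0)=\sigma_\pm$, glued to smooth far-field transitions connecting $u_-$ to $u_l$ on the left and $u_+$ to $u_r$ on the right; using convexity of $R$ these transitions are chosen to remain in $\mathrm{int}\,R$ and in $R^c$ respectively, and for short time the three wave patterns do not interact. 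Then $X(t)$ is the unique global minimizer of $E_a(u(t),\cdot)$, so $E_a(u(t),h(t))\ge E_a(u(t),X(t))$ for \emph{any} shift $h(t)$, while the identity gives $\frac{d}{dt}E_a(u(t),X(t))\big|_{t=0}=D_{RH}(u_{l,r},u_\pm)>0$. Combining, $E_a(u(t),h(t))\ge E_a(u(t),X(t))>E_a(u(0),0)$ for small $t>0$, again contradicting \eqref{cont}; thus $D_{RH}\le 0$ and $(\mathcal{H}2)$ holds.

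The routine verifications — the identity on the smooth pieces, the vanishing of the far-field fluxes, and the finiteness of $E_a(u_0,0)$ — are immediate. The genuine difficulties, which I would treat carefully, are the two existence/structure claims: short-time existence of the classical solution for $(\mathcal{H}1)$, and, more delicately, of the piecewise-smooth single-shock solution for $(\mathcal{H}2)$ (relying on the stability structure of the shock together with non-interaction of the waves on a short time interval), as well as the verification that these solutions remain strictly on the prescribed sides of $\Sigma_a$, which is exactly what makes $X(t)$ a genuine minimizer. It is worth stressing that this minimizer mechanism requires $u_-$ and $u_+$ to lie on opposite sides of $\Sigma_a$; when they lie on the same side the front is no longer a minimizer of $E_a(u(t),\cdot)$, which is precisely why the necessary condition yields $(\mathcal{H}2)$ but not the stronger $(\mathcal{H}2^*)$.
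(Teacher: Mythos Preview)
Your argument for $(\mathcal{H}1)$ is essentially identical to the paper's: both build a smooth solution with a plateau at $u^*\in\Sigma_a$ near the origin and observe that the $\dot h$-term drops out, leaving the strictly positive $D_{sm}(u_{l,r};u^*)$.

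For $(\mathcal{H}2)$, your route is genuinely different from the paper's and worth comparing. The paper changes the flux to $A(u)=f(u)-\sigma_\pm u$ so as to \emph{freeze} the shock at $x=0$; then, for an arbitrary Lipschitz shift $h$, it approximates $h$ by polynomials $p_n$ (so each $p_n$ has finitely many zeros), case-splits on the sign of $p_n(t)$, and uses the two sign conditions $\eta(u_-|u_l)<a\eta(u_-|u_r)$, $\eta(u_+|u_l)>a\eta(u_+|u_r)$ to check that the $\dot p_n$-term always has the favourable sign. Your approach avoids this polynomial approximation entirely by the clean observation that the shock front $X(t)$ is the \emph{global} minimizer of $h\mapsto E_a(u(t),h)$, so it suffices to compute the time derivative along the single curve $X(t)$. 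This is more conceptual and explains transparently why the strict inequalities in $(\mathcal{H}2)$ (and not $(\mathcal{H}2^*)$) are exactly what is needed.

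What your approach costs is the additional construction requirement that the solution stay in $\mathrm{int}\,R$ to the left of $X(t)$ and in $R^c$ to the right, for short time. Two remarks on this: first, the left transition from $u_l$ to $u_-$ lies in the convex open set $\mathrm{int}\,R$ and can be taken along a segment, but the right transition from $u_+$ to $u_r$ lies in $R^c$, which is not convex; you should invoke path-connectedness of the complement of a convex set (valid for $n\ge 2$) rather than ``convexity of $R$'' there. Second, persistence of these sign conditions for positive time is a uniform-continuity argument (the initial profile lies in a compact subset of $\mathrm{int}\,R$, resp.\ of the open set $R^c$, on each side), and the persistence of the single shock follows, as in the paper, from finite speed of propagation reducing the picture near $X(t)$ to the Riemann problem. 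The paper's approach, by contrast, needs no global sign information on the solution---only the values $u_\pm$ on the plateaus---at the price of the heavier polynomial/zero-crossing machinery.
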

\begin{proof}
Suppose that the entropic discontinuity $(u_{l}, u_{r})$ is not {\it{a-RES}}, by the definition of {\it{a-RES}}, at least one of $(\mathcal{H}1)$ and $(\mathcal{H}2)$ dose not hold. That is, we assume that one of the following conditions holds.
\begin{align*}
\begin{aligned} 
&\bullet (\sim\mathcal{H}1): ~\exists~ \bar{u} \quad \mbox{such that}\quad \eta(\bar{u}|u_l) = a\eta(\bar{u}|u_r)~\mbox{and}~ D_{sm}(u_{l,r};\bar{u}) > 0.\\
&\bullet (\sim\mathcal{H}2): ~\exists~\mbox{entropic discontinuity}~(u_-,u_+,\sigma_{\pm})\quad \mbox{such that}\\
&\hspace{1.5cm} \eta(u_-|u_l) < a \eta(u_-|u_r),~ \eta(u_+|u_l) > a \eta(u_+|u_r)~\mbox{and}~  D_{RH}(u_{l,r},u_{\pm})>0.
\end{aligned}
\end{align*}
We may show that both cases above provide a contradiction with the contractivity \eqref{cont}.\\   
$\bullet$ {\bf Case of $(\sim\mathcal{H}1)$} : Consider a smooth initial data $u_0$ defined by
\begin{align*}
\begin{aligned}
u_0(x)=\left\{ \begin{array}{ll}
        \bar{u} & \mbox{if $x\in (-R,R)$ for some $R>0$},\\
         u_l & \mbox{if $x \in (-\infty, -2R)$},\\
         u_r & \mbox{if $x \in (2R, \infty)$},\end{array} \right.
\end{aligned}
\end{align*} 
where $\bar{u}$ is a constant vector appeared in $(\sim\mathcal{H}1)$.\\
Then, $E_a(u_0,0)<\infty$ and the system \eqref{main} admits the smooth solution $u$ for some small time $t< T_0$,
which satisfies
\begin{align*}
\begin{aligned}
\frac{d}{dt}E_a(u(t),h(t)) &= \int_{-\infty}^{h(t)} \partial_t\eta(u(t,x)|u_l) dx + a\int_{h(t)}^{\infty} \partial_t \eta(u(t,x)|u_r) dx\\
&\quad +\dot{h}(t) \Big( \eta(u(t,h(t))|u_l) - a \eta(u(t,h(t))|u_r) \Big).
\end{aligned}
\end{align*} 
Since $u$ is smooth for such short time, $u$ satisfies the entropy equality
\[
\partial_t \eta(u) + \partial_x q(u)=0.
\]
Thus, for any constant vector $v\in \bbr^n$, the smooth solution $u$ verifies 
\beq\label{relative-eq}
\partial_t \eta(u|v) + \partial_x q(u|v)=0.
\eeq
We use \eqref{relative-eq} to get
\begin{align*}
\begin{aligned}
\frac{d}{dt}E_a(u(t),h(t)) &= -\int_{-\infty}^{h(t)} \partial_x q(u(t,x)|u_l) dx - a\int_{h(t)}^{\infty} \partial_x q(u(t,x)|u_r) dx\\
&\quad +\dot{h}(t) \Big( \eta(u(t,h(t))|u_l) - a \eta(u(t,h(t))|u_r) \Big)\\
&= a q(u(t,h(t))|u_r) - q(u(t,h(t))|u_l) \\
&\quad +\dot{h}(t) \Big( \eta(u(t,h(t))|u_l) - a \eta(u(t,h(t))|u_r) \Big).
\end{aligned}
\end{align*} 
For any Lipschitz curve $h(t)$ with $h(0)=0$, since we can choose $T_0$ small enough such that
\[
u(t,h(t))=\bar{u}\quad\mbox{for all}~t \in [0,T_0],
\]
it follows from $(\sim\mathcal{H}1)$ that for all $t \in [0,T_0],$
\begin{align*}
\begin{aligned}
\frac{d}{dt}E_a(u(t),h(t)) &= a q(\bar{u}|u_r) - q(\bar{u}|u_l) +\dot{h}(t) \Big( \eta(\bar{u}|u_l) - a \eta(\bar{u}|u_r) \Big)\\
&= a q(\bar{u}|u_r) - q(\bar{u}|u_l) \\
&=D_{sm}(u_{l,r};\bar{u}) > 0,
\end{aligned}
\end{align*} 
which contradicts with \eqref{cont}.\\
$\bullet$ {\bf Case of $(\sim\mathcal{H}2)$} : For the entropic discontinuity $(u_-,u_+,\sigma_{\pm})$ in $(\sim\mathcal{H}2)$, we consider a initial data $u_0$ that is discontinuous at $x=0$ and smooth on $(-\infty,0)$ and $(0,\infty)$, and satisfies that  for some $R>0$,
\begin{align*}
\begin{aligned}
u_0(x)=\left\{ \begin{array}{ll}
         u_- & \mbox{if $x\in (-R,0)$} ,\\
         u_+ & \mbox{if $x\in (0,R)$},\\
         u_l & \mbox{if $x \in (-\infty, -2R)$},\\
         u_r & \mbox{if $x \in (2R, \infty)$},\end{array} \right.
\end{aligned}
\end{align*} 
In order to freeze the shock speed $\sigma_{\pm}$, we consider a new flux $A$ defined by
\beq\label{new-flux}
A(u)=f(u)-\sigma_{\pm} u.
\eeq
Indeed, by Rankine-Hugoniot condition, we have
\[
A(u_+)-A(u_-)= f(u_+)-f(u_-) -\sigma_{\pm} (u_+ -u_-)=0,
\]
which means that the speed of entropic shock $(u_-,u_+)$ to the system 
\beq\label{new-eq}
\partial_t u + \partial_x A(u) = 0
\eeq
is zero. Let $G$ be a $C^2$-entropy flux such that $G^{\prime}=\eta^{\prime} A^{\prime}$. Then, since
\[
G^{\prime}=\eta^{\prime} (f^{\prime}-\sigma_{\pm})=q^{\prime} -\sigma_{\pm}\eta^{\prime},
\]
we have 
\beq\label{G-q}
G=q-\sigma_{\pm}\eta + C\quad \mbox{for some constant}~C.
\eeq
Let us consider a weak entropic solution $w\in L^{\infty}((0,\infty)\times\bbr)^n \cap BV_{loc}((0,\infty)\times\bbr)^n$ to the system \eqref{new-eq} with initial data $u_0$. Then, there exists small time $T_*>0$ such that the weak entropic solution $w$ is smooth on both $(-\infty,0)$ and $(0,\infty)$, and has a shock $(u_-,u_+)$ with zero speed  at $x=0$, and satisfies that for all $t\le T_*$,
\begin{align}\label{u-0}
\begin{aligned}
w(t,x)=\left\{ \begin{array}{ll}
         u_- & \mbox{if $x\in (-R/2,0)$},\\
         u_+ & \mbox{if $x\in (0,R/2)$},\\
          u_l & \mbox{if $x \in (-\infty, -3R)$},\\
         u_r & \mbox{if $x \in (3R, \infty)$}. \end{array} \right. 
\end{aligned}
\end{align} 
Let $h(t)$ be any Lipschitz curve with $h(0)=0$. Then, we can choose $T_*>0$ small enough such that 
\[
|h(t)|<\frac{R}{4}\quad t\in[0,T_*].
\]
By the continuity of $h$, we use the Weierstrass approximation theorem to choose a sequence $(p_n)$ of polynomials such that
\beq\label{weier}
\|p_n-h\|_{L^{\infty}([0,T_*])} ~\rightarrow 0, \quad n\rightarrow \infty,
\eeq
which yields that for sufficiently large $N$,
\beq\label{p-n}
|p_n(t)|<\frac{R}{4},\quad n\ge N,\quad t\in[0,T_*].
\eeq
We first show that
\beq\label{claim-p}
E_a(w(t),p_n(t))\ge E_a(u_0,p_n(0)) + t D_{RH}(u_{l,r},u_{\pm}) ,\quad t\in(0,T_*].
\eeq
Let us begin by noticing the fact that 
\beq\label{w-0}
w(t,x)~\mbox{is constant in time}~ t\in (0,T_*],\quad\mbox{for}~x\in (-\frac{R}{2},\frac{R}{2}),
\eeq 
thus $w(t,w)$ is differentiable with respect to $t\in (0,T_*]$ for all $x\in \bbr$. This yields
\begin{align*}
\begin{aligned}
\frac{d}{dt}E_a(w(t),p_n(t)) &= \underbrace{\int_{-\infty}^{p_n(t)} \partial_t\eta(w(t,x)|u_l) dx}_{I_1} + \underbrace{a\int_{p_n(t)}^{\infty} \partial_t \eta(w(t,x)|u_r) dx}_{I_2}\\
&\quad +\underbrace{\dot{p}_n(t) \Big( \eta(w(t,p_n(t)-)|u_l) - a \eta(w(t,p_n(t)+)|u_r) \Big)}_{I_3}.
\end{aligned}
\end{align*}
{\bf (Case of non-constant polynomial $p_n$)} : If the polynomial $p_n$ is not constant for some $n\ge N$, $p_n$ has a finite number of zeros as $t_1, t_2\cdots,t_k$ with
\[
0\le t_1<t_2<\cdots<t_k\le T_*.
\]
Then $p_n$ is either negative or positive on each interval $(t_i,t_{i+1})$, where $i$ belongs to one of the following classes
\begin{align*}
\begin{aligned}
&1\le i\le k-1 \quad \mbox{if $t_1=0$ and $t_k=T_*$},\\
&0\le i\le k-1 \quad \mbox{if $t_1>0$ and $t_k=T_*$ where $t_0:=0$},\\
&1\le i\le k \quad \mbox{if $t_1=0$ and $t_k<T_*$ where $t_{k+1}:=T_*$},\\
&0\le i\le k \quad \mbox{if $t_1>0$ and $t_k<T_*$}.
\end{aligned}
\end{align*} 
First of all, let us consider the case that $p_n(t)<0$ for all $t\in (t_l,t_{l+1})$ for some $l$. Then the solution $w$ is smooth on $(0,T_*]\times(-\infty, p_n(t)]$. Thus, we apply \eqref{relative-eq} to the system \eqref{new-eq} to get
\[
I_1=-\int_{-\infty}^{p_n(t)} \partial_x G(w(t,x)|u_l) dx.
\]
Since by \eqref{u-0} and \eqref{p-n},
\beq\label{w-l}
w(t,p_n(t))=u_-\quad\mbox{ for all} ~t\in (t_l,t_{l+1}), 
\eeq
we have
\[
I_1= -G(u_- |u_l).
\]
Since $w$ is discontinuous at $x=0$ for $0<t\le T_*$, we rewrite $I_2$ as
\[
I_2 =a\int_{p_n(t)}^{\frac{R}{4}} \partial_t \eta(w(t,x)|u_r) dx +a\int_{\frac{R}{4}}^{\infty} \partial_t \eta(w(t,x)|u_r) dx.
\]
By \eqref{w-0}, we have 
\[
\partial_t \eta(w(t,x)|u_r)=0,\quad (t,x)\in (0,T_*]\times [p_n(t),\frac{R}{4}].
\]
Since $w$ is smooth on $(0,T_*]\times[\frac{R}{4},\infty)$ and $w(t,\frac{R}{4})=u_+$ for $0<t\le T_*$, we have
\[
I_2 =-a\int_{\frac{R}{4}}^{\infty}\partial_x G(w(t,x)|u_r) dx = a G(u_+| u_r).
\]
For $I_3$, we use \eqref{w-l} to get
\[
I_3 =\dot{p}_n(t) \Big( \eta(u_-|u_l) - a \eta(u_-|u_r) \Big).
\]
Thus, we have shown
\begin{align*}
\begin{aligned}
\frac{d}{dt}E_a(w(t),p_n(t)) = a G(u_+| u_r) -G(u_- |u_l)+\dot{p}_n(t) \Big( \eta(u_-|u_l) - a \eta(u_-|u_r) \Big),\quad t\in (t_l,t_{l+1}).
\end{aligned}
\end{align*}
We here use \eqref{G-q} to reduce 
\begin{align*}
\begin{aligned}
&a G(u_+| u_r) -G(u_- |u_l)\\
&\quad = a\Big[ q(u_+) -\sigma_{\pm} \eta(u_+) - q(u_r) +\sigma_{\pm} \eta(u_r) - d\eta (u_r) (f(u_+)-\sigma_{\pm} u_+ -f(u_r) -\sigma_{\pm} u_r )   \Big]\\
&\qquad   -q(u_-) +\sigma_{\pm} \eta(u_-) + q(u_l) -\sigma_{\pm} \eta(u_l) + d\eta (u_l) (f(u_-)-\sigma_{\pm} u_- -f(u_l) -\sigma_{\pm} u_l ) \\
&\quad = a q(u_+| u_r) -q(u_- |u_l) - \sigma_{\pm} \Big(a\eta(u_+| u_r) -\eta(u_- |u_l)\Big)\\
&\quad =D_{RH}(u_{l,r},u_{\pm}),
\end{aligned}
\end{align*} 
which provides
\begin{align*}
\begin{aligned}
\frac{d}{dt}E_a(w(t),p_n(t)) = D_{RH}(u_{l,r},u_{\pm})+\dot{p}_n(t) \Big( \eta(u_-|u_l) - a \eta(u_-|u_r) \Big),\quad t\in (t_l,t_{l+1}).
\end{aligned}
\end{align*}
For any $\tau,t \in (t_l,t_{l+1})$ with $\tau<t$, integrating it over $[\tau,t]$, we have
\begin{align*}
\begin{aligned}
E_a(w(t),{p}_n(t))&=E_a(w(\tau),{p}_n(\tau)) +(t-\tau) D_{RH}(u_{l,r},u_{\pm})\\
&\quad+({p}_n(t)-{p}_n(\tau)) \Big( \eta(u_-|u_l) - a \eta(u_-|u_r) \Big).
\end{aligned}
\end{align*}
Since $E_a(w(t),p_n(t))$ is continuous in time $t$ and ${p}_n(t_l)=0$, taking $\tau\to t_l$, we have
\begin{align*}
\begin{aligned}
E_a(w(t),p_n(t))&=E_a(w(t_{l}),p_n(t_{l})) +(t-t_{l}) D_{RH}(u_{l,r},u_{\pm})\\
&\quad+{p}_n(t) \Big( \eta(u_-|u_l) - a \eta(u_-|u_r) \Big).
\end{aligned}
\end{align*}
Since ${p}_n(t)<0$ for all $t\in (t_l,t_{l+1})$, we use the assumption $(\sim\mathcal{H}2)$ to get 
\begin{align}
\begin{aligned}\label{L-th}
E_a(w(t),p_n(t))&\ge E_a(w(t_{l}),p_n(t_{l})) +(t-t_{l}) D_{RH}(u_{l,r},u_{\pm}),\quad t\in (t_l,t_{l+1}],
\end{aligned}
\end{align}
where note that this inequality also holds at $t=t_{l+1}$ by the time-continuity of $E_a(w(t),p_n(t))$.

On the other hand, for the case where $p_n(t)>0$ for all $t\in (t_m,t_{m+1})$ for some $m$, we follow the same argument as above. More precisely for all $t\in (t_m,t_{m+1})$, we get
\begin{align*}
\begin{aligned}
I_1 &=\int_{-\infty}^{-\frac{R}{4}} \partial_t \eta(w(t,x)|u_l) dx + \int_{-\frac{R}{4}}^{p_n(t)} \partial_t \eta(w(t,x)|u_l) dx\\
&=-\int_{-\infty}^{-\frac{R}{4}}  \partial_x G(w(t,x)|u_l) dx = - G(u_-| u_l),\\
I_2&=-a\int_{p_n(t)}^{\infty} \partial_x G(w(t,x)|u_r) dx = a G(u_+|u_r),\\
I_3 &=\dot{p}_n(t) \Big( \eta(u_+|u_l) - a \eta(u_+|u_r) \Big),
\end{aligned}
\end{align*}
which provides that for all $t\in (t_m,t_{m+1})$,
\begin{align*}
\begin{aligned}
E_a(w(t),p_n(t))&=E_a(w(t_{m}),p_n(t_{m})) +(t-t_{m}) D_{RH}(u_{l,r},u_{\pm})\\
&\quad+{p}_n(t) \Big( \eta(u_+|u_l) - a \eta(u_+|u_r) \Big).
\end{aligned}
\end{align*}
Since ${p}_n(t)>0$ for all $t\in (t_m,t_{m+1})$, we use the assumption $(\sim\mathcal{H}2)$ to get 
\begin{align}
\begin{aligned}\label{m-th}
E_a(w(t),p_n(t))&\ge E_a(w(t_{m}),p_n(t_{m})) +(t-t_{m}) D_{RH}(u_{l,r},u_{\pm}),\quad t\in (t_m,t_{m+1}].
\end{aligned}
\end{align}
Therefore, combining \eqref{L-th} and \eqref{m-th}, we can conclude \eqref{claim-p}.\\
{\bf (Case of constant polynomial $p_n$)} : If polynomial $p_n$ is constant for some $n\ge N$, using \eqref{p-n}, we have
\[
p_n(t)=p_n(0)\in (-\frac{R}{4},\frac{R}{4})\quad \mbox{for all}~ t\in[0,T_*].
\]
First of all, for the case of $p_n(0)\neq 0$, the claim \eqref{claim-p} follows directly from the previous arguments.
If $p_n(0) = 0$, we also combine the previous arguments to have
\begin{align*}
\begin{aligned}
I_1 &=\int_{-\infty}^{-\frac{R}{4}} \partial_t \eta(w(t,x)|u_l) dx + \int_{-\frac{R}{4}}^{0} \partial_t \eta(w(t,x)|u_l) dx\\
&=-\int_{-\infty}^{-\frac{R}{4}}  \partial_x G(w(t,x)|u_l) dx = - G(u_-| u_l),\\
I_2&=a\int_{0}^{\frac{R}{4}} \partial_t \eta(w(t,x)|u_r) dx + a\int_{\frac{R}{4}}^{\infty} \partial_t \eta(w(t,x)|u_r) dx\\
&=-a\int_{\frac{R}{4}}^{\infty}  \partial_x G(w(t,x)|u_r) dx = a G(u_+|u_r),\\
I_3 &=0,
\end{aligned}
\end{align*}
which provides \eqref{claim-p}.

Since $t D_{RH}(u_{l,r},u_{\pm})$ in \eqref{claim-p} independent of $n$ and $(\sim\mathcal{H}2)$ implies
\[
t D_{RH}(u_{l,r},u_{\pm})>0\quad \mbox{for}~t>0,
\]
using \eqref{weier}, \eqref{claim-p} and $w\in L^{\infty}([0,\infty)\times\bbr)$, we show
\beq\label{final-0}
E_a(w(t),h(t)) > E_a(u_0,0), \quad t\in(0,T_*].
\eeq
Indeed, since
\begin{align*}
\begin{aligned}
&|E_a(w(t),h(t))-E_a(w(t),p_n(t))|\\
&\quad=\Big| \int_{p_n(t)}^{h(t)} \eta(w(t,x)|u_l) dx + a\int_{h(t)}^{p_n(t)} \eta(w(t,x)|u_r) dx \Big|\\
&\quad \le \|p_n-h\|_{L^{\infty}([0,T_*])} \Big( \|\eta(w|u_l)\|_{L^{\infty}([0,\infty)\times\bbr)} + a \|\eta(w|u_r)\|_{L^{\infty}([0,\infty)\times\bbr)} \Big)\\
&\quad \rightarrow 0\quad \mbox{as} ~n\to \infty,
\end{aligned}
\end{align*}
we have
\[
E_a(w(t),p_n(t)) \rightarrow  E_a(w(t),h(t))\quad\mbox{uniformly in}~t\in[0,T_*],
\]
which implies that for $t\in(0,T_*]$,
\begin{align*}
\begin{aligned}
E_a(w(t),h(t))\ge E_a(u_0,0) + \frac{t}{2} D_{RH}(u_{l,r},u_{\pm})>E_a(u_0,0).
\end{aligned}
\end{align*}
Since the Lipschitz shift $h$ is arbitrary, the weak entropic solution $u$ of \eqref{main} with $u_0$ also satisfies
\[
E_a(u(t),h(t))>E_a(u_0,0),\quad 0<t\le T_*.
\] 
which provides the contradiction with \eqref{cont}.\\
\end{proof}

\section{On $a$-contractions for extremal shocks}\label{cont-shock}
In this section, we are going to show that extremal shocks (i.e. $1$-shock or $n$-shock) is {\it{a-SRES}} for some $a$, which implies that they satisfies $a$-contraction by Theorem \ref{thm-cont-1}. Even though the $a$-contraction for extremal shocks has been shown by Vasseur in \cite{Vasseur-2013}, we intend to here give an alternate proof as a direct application of the criteria built in Theorem \ref{thm-cont-1}. This result goes beyond the known results valid in the class of $BV$ solutions under small perturbation in $BV$. In the case of small perturbation in $L^{\infty}\cap BV$, Bressan, Crasta and Piccoli in \cite {Bressan1} developed a powerful theory of $L^1$ stability for entropy solution obtained by either the Glimm scheme or the wave front-tracking method. The theory also works in some cases for small perturbation of large shock (See \cite{Lewicka, Bressan3}). On the other hand, Chen, Frid and Li in \cite{Chen1} use the relative entropy to establish the uniqueness and stability of solutions to the Riemann problem for the $3\times 3$ Euler system in a large $L^1\cap L^{\infty}\cap BV_{loc}$ perturbation (See also \cite{CF1, Frid1}).\\

\subsection{Hypotheses}
We suppose the same hypotheses for the system \eqref{main} as in \cite{Vasseur-2013}, which are especially applied to the isentropic Euler system and full Euler system. (See \cite{LV, Vasseur-2013})\\

The following hypotheses are related to the $1$-shock and other entropic discontinuities.
\bigskip

\begin{itemize}
\item $(\mathcal{H})$ :  
For any fixed $u_l\in \mathcal{V}$ and $1 \le i\le n$, there exists a neighborhood $B\subset \mathcal{V}$ of $u_l$ such that for any $u\in B$, there is a $i$-th Hugoniot curve $S_{u}^i(s) \in \mathcal{U}$ defined on an interval $[0,s_u)$ (possibly $s_u=\infty$), such that 
$S_{u}^i(0)=u$ and the Rankine-Hugoniot condition:
\[f(S_{u}^i(s)) -f(u) = \sigma_{u}^i(s) (S_{u}^i(s) - u),\]
where $\sigma_{u}^i(s)$ is a velocity function. Here, $u \rightarrow s_{u}$ is Lipschitz on $\mathcal{U}$, $(s,u) \rightarrow S^i_u(s)$ and 
$(s,u) \rightarrow \sigma^i_u(s)$ are both $C^1$ on $\{ (s,u) ~|~ s\in[0,s_u),~u\in \mathcal{U} \}$,
and the following conditions are satisfied.
\begin{align}
\begin{aligned}\label{Liu-0}
\mbox{(a)}~ \frac{d}{ds}\sigma_{u}^1(s) < 0,\quad\sigma_{u}^1(0)=\lambda_1(u),\quad \mbox{(b)}~ \frac{d}{ds} \eta(u|S_u^1(s)) >0,\quad \mbox{for all}~s>0,
\end{aligned}
\end{align} 
\beq\label{Lax-0}
\lambda_{1} (S_{u}^1(s)) < \sigma_{u}^1(s)< \lambda_{1} (u) \le\sigma_{u}^i(s),\quad 2\le i\le n,\quad s>0, 
\eeq
and
\begin{align}
\begin{aligned}\label{other-con}
\frac{d}{ds}\sigma_{u}^i(s) \le 0,\quad \sigma_{u}^i(0)=\lambda_i(u)\quad \mbox{for $2\le i\le n$}.
\end{aligned}
\end{align} 
\end{itemize}
Regarding the hypotheses for the $n$-shock, we just replace \eqref{Liu-0} and \eqref{Lax-0} by \eqref{Liu-0-1} and \eqref{Lax-0-1} as follows.

\begin{itemize}
\item $(\mathcal{H}^*)$ :  
For any fixed $u_r\in \mathcal{V}$ and $1 \le i\le n$, there exists a neighborhood $B\subset \mathcal{V}$ of $u_r$ such that for any $u\in B$, there is a $i$-th Hugoniot curve $S_{u}^i(s) \in \mathcal{U}$ defined on an interval $[0,s_u)$ (possibly $s_u=\infty$), such that 
$S_{u}^i(0)=u$ and the Rankine-Hugoniot condition:
\[f(S_{u}^i(s)) -f(u) = \sigma_{u}^i(s) (S_{u}^i(s) - u),\]
where $\sigma_{u}^i(s)$ is a velocity function. Here, $u \rightarrow s_{u}$ is Lipschitz on $\mathcal{U}$, $(s,u) \rightarrow S^i_u(s)$ and 
$(s,u) \rightarrow \sigma^i_u(s)$ are both $C^1$ on $\{ (s,u) ~|~ s\in[0,s_u),~u\in \mathcal{U} \}$,
and the following conditions are satisfied.
\begin{align}
\begin{aligned}\label{Liu-0-1}
 \frac{d}{ds}\sigma_{u}^n(s) > 0,\quad  \frac{d}{ds} \eta(u|S_u^n(s)) >0,\quad \mbox{for all}~s>0,
\end{aligned}
\end{align} 
\beq\label{Lax-0-1}
\lambda_{n} (S_{u}^n(s)) > \sigma_{u}^n(s)> \lambda_{n} (u) \ge\sigma_{u}^i(s),\quad 1\le i\le n-1,\quad s>0, 
\eeq
and
\begin{align*}
\begin{aligned}
\frac{d}{ds}\sigma_{u}^i(s) \le 0,\quad \sigma_{u}^i(0)=\lambda_i(u)\quad \mbox{for $1\le i\le n-1$}.
\end{aligned}
\end{align*} 
\end{itemize}
\begin{remark}\label{remark-dual}
1. Note that a system \eqref{main} verifies the hypotheses $(\mathcal{H})$ relative to $u_l\in \mathcal{V}$ if and only if the system 
\[
\partial_t u -\partial_x f(u)=0
\]
verifies $(\mathcal{H}^*)$ relative to $u_r\in \mathcal{V}$. In other words, $(\mathcal{H})$ and $(\mathcal{H}^*)$ are dual in this way. Thus it is enough to show the $a$-contraction for 1-shocks, because the result of $n$-shock is obtained by applying the case of $1$-shock to $\tilde{u}(t,x)= u(t,-x)$, which is also entropic solution to \eqref{main}. From now on, we will restrict our arguments to the case of a 1-shock.\\
2. The assumptions (a) in \eqref{Liu-0}, \eqref{Lax-0} and \eqref{other-con} are just due to Liu and Lax entropy conditions. The only additional requirement is (b) in \eqref{Liu-0}, which is a condition on the growth of the shock along $S_u^1(s)$ measured through the pseudo-distance. This condition arises naturally in the study of admissibility criteria for systems of conservation laws. In particular, it ensures that Liu admissible shocks are entropic even for moderate to strong shocks. Indeed, this fact follows
from the important formula : (See also \cite{Dafermos4, Lax, Ruggeri})
\[
q(S_u^1(s))   -  q(u) = \sigma_u^1(s) ( \eta(S_u^1(s)- \eta(u)) + \int_0^s \frac{d}{dt}\sigma^{1}_u(t)\eta(u|S_u^1(t))dt.
\]
In \cite{BFZ}, Barker, Freist\"uhler, and Zumbrun showed that the stability (and so the contraction as well) fails to hold for the full Euler system if hypothesis (b) in \eqref{Liu-0} is replaced by 
\[
 \frac{d}{ds} \eta(S_u^1(s)) >0, \quad s>0.
\]
This shows that the strength of the shock is better measured by the relative entropy rather
than the entropy itself. \\
3. In \cite{TZ}, Texier and Zumbrun showed that the hypotheses $(\mathcal{H})$ implies the Lopatinski condition of Majda.
\end{remark}

\subsection{Structural lemmas}
We first present the following structural Lemmas treated in \cite{Vasseur-2013}. The first lemma provides a kind of triangle inequality for the pseudo metric induced by $\eta(\cdot|\cdot)$ and its analogous inequalities, which are useful tools in the following proofs.
\begin{lemma}\label{lem-metric}
For any $u, v, w \in \bbr^n$, we have
\beq\label{eta-m}
\eta(u|w)+\eta(w|v) = \eta(u|v) + (\nabla\eta(w)-\nabla\eta(v)) \cdot (w-u),
\eeq
and 
\beq\label{q-m}
q(u|w)+q(w|v)= q(u|v) + (\nabla\eta(w)-\nabla\eta(v)) \cdot (f(w)-f(u)).
\eeq
Therefore, for any $\sigma\in\bbr$,
\begin{align}
\begin{aligned}\label{metric}
q(u|v) - \sigma \eta(u|v) &=  \Big( q(u|w) - \sigma \eta(u|w) \Big) + \Big(q(w|v) - \sigma \eta(w|v)\Big)\\
&\quad -  (\nabla\eta(w)-\nabla\eta(v)) \cdot \Big(f(w)-f(u) -\sigma (w-u) \Big)
\end{aligned}
\end{align} 
\end{lemma}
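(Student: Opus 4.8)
The plan is to prove all three identities by direct expansion of the definitions of the relative entropy $\eta(\cdot|\cdot)$ and the relative entropy flux $q(\cdot|\cdot)$, with no appeal to the compatibility relation $\nabla q = \nabla\eta\,\nabla f$ nor to convexity. The key structural observation is that \eqref{eta-m} and \eqref{q-m} have exactly the same algebraic form: \eqref{q-m} is obtained from \eqref{eta-m} by replacing each ``affine slot'' $u,v,w$ (the arguments appearing linearly, i.e. outside $\nabla\eta$) with $f(u),f(v),f(w)$, while leaving the gradient factors $\nabla\eta(w),\nabla\eta(v)$ untouched. Hence a single bookkeeping computation, performed with abstract symbols, proves both simultaneously, and \eqref{metric} then follows by a linear combination.

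\textbf{The two base identities.} First I would establish \eqref{eta-m}. Writing out $\eta(u|w)=\eta(u)-\eta(w)-\nabla\eta(w)\cdot(u-w)$ and $\eta(w|v)=\eta(w)-\eta(v)-\nabla\eta(v)\cdot(w-v)$, the $\pm\eta(w)$ terms cancel in the sum, leaving $\eta(u|w)+\eta(w|v)=\eta(u)-\eta(v)-\nabla\eta(w)\cdot(u-w)-\nabla\eta(v)\cdot(w-v)$. Comparing with $\eta(u|v)+(\nabla\eta(w)-\nabla\eta(v))\cdot(w-u)=\eta(u)-\eta(v)-\nabla\eta(v)\cdot(u-v)+(\nabla\eta(w)-\nabla\eta(v))\cdot(w-u)$, the identity reduces to the purely algebraic equality $-\nabla\eta(w)\cdot(u-w)-\nabla\eta(v)\cdot(w-v)=-\nabla\eta(v)\cdot(u-v)+(\nabla\eta(w)-\nabla\eta(v))\cdot(w-u)$, which one checks by expanding both sides into the four bilinear terms $\nabla\eta(w)\cdot u$, $\nabla\eta(w)\cdot w$, $\nabla\eta(v)\cdot w$, $\nabla\eta(v)\cdot v$; the $\nabla\eta(v)\cdot u$ contributions cancel and the remaining terms match. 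For \eqref{q-m}, the same expansion applies verbatim with $u,v,w$ replaced by $f(u),f(v),f(w)$ in every affine slot (and $q$ replacing $\eta$ in the scalar terms); the identical cancellation yields the claim. I would emphasize that no property of $q$ beyond its definition is used here, so the identity is valid as a formal consequence of the definition of $q(\cdot|\cdot)$.

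\textbf{Combining for \eqref{metric}.} Finally, I would subtract $\sigma$ times \eqref{eta-m} from \eqref{q-m} and regroup. On the left this produces $[q(u|w)-\sigma\eta(u|w)]+[q(w|v)-\sigma\eta(w|v)]$, and on the right it produces $[q(u|v)-\sigma\eta(u|v)]$ together with the single combined correction term $(\nabla\eta(w)-\nabla\eta(v))\cdot[(f(w)-f(u))-\sigma(w-u)]$, which is exactly \eqref{metric}. I do not expect any genuine obstacle: the entire proof is elementary algebra. The only point requiring care is the sign bookkeeping in matching the correction terms of \eqref{eta-m} and \eqref{q-m}, since the factor $(\nabla\eta(w)-\nabla\eta(v))$ multiplies $(w-u)$ in one identity and $(f(w)-f(u))$ in the other; keeping these orientations consistent is what makes the final regrouping in \eqref{metric} come out with the stated sign.
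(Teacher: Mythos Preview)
Your proposal is correct and follows essentially the same approach as the paper: both proceed by direct expansion of the definitions of $\eta(\cdot|\cdot)$ and $q(\cdot|\cdot)$, observe the telescoping cancellation of the $\eta(w)$ (resp.\ $q(w)$) terms, and then regroup. Your additional structural remark that \eqref{q-m} is obtained from \eqref{eta-m} by replacing the affine slots $u,v,w$ with $f(u),f(v),f(w)$ is a tidy way to avoid repeating the computation, but the underlying argument is identical to the paper's.
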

\begin{proof}
The proof follows directly from the definition of the relative entropy $\eta(\cdot|\cdot)$ and its flux $q(\cdot|\cdot)$. Indeed, the following computations hold: 
\begin{align*}
\begin{aligned}
\eta(u|w)+\eta(w|v)&= \Big( \eta({u})-\eta(w) -\nabla\eta(w)\cdot(u-w)\Big)\\
&\quad + \Big(\eta(w)-\eta(v) -\nabla\eta(v)\cdot(w-v) \Big) \\
&=  \eta(u)-\eta(v) -\nabla\eta(v)\cdot(u-v) + (\nabla\eta(w)-\nabla\eta(v)) \cdot (w-u) \\
&= \eta(u|v) + (\nabla\eta(w)-\nabla\eta(v)) \cdot (w-u),
\end{aligned}
\end{align*} 
\begin{align*}
\begin{aligned}
q(u|w)+q(w|v)&= \Big( q({u})-q(w) -\nabla\eta(w)\cdot(f(u)-f(w))\Big)\\
&\quad + \Big(q(w)-q(v) -\nabla\eta(v)\cdot(f(w)-f(v)) \Big) \\
&=  q(u)-q(v) -\nabla\eta(v)\cdot(f(u)-f(v)) + (\nabla\eta(w)-\nabla\eta(v)) \cdot (f(w)-f(u)) \\
&= q(u|v) + (\nabla\eta(w)-\nabla\eta(v)) \cdot (f(w)-f(u)).
\end{aligned}
\end{align*} 
\end{proof}

The following lemma gives an explicit formula concerning the entropy lost at an entropic discontinuity $(u,S^i_{u}(s))$ for any $i$-family.  
\begin{lemma}\label{lem-v1}
For any Rankine-Hugoniot discontinuity $(u,S^i_u(s),\sigma_u^i(s))$ and any vector $v$, we have
\beq\label{vasseur}
q(S_u^i(s),v)   -  \sigma_u^i(s)  \eta(S_u^i(s)|v) =  q(u,v)   -  \sigma_u^i(s)  \eta(u|v) + \int_0^s \frac{d}{dt}\sigma^{i}_u(t)\eta(u|S_u^i(t))dt.
\eeq
Therefore, for any $s\ge 0$, $s_0>0$, we have
\beq\label{vasseur-1}
q(S_u^i(s),S_u^i(s_0))   -  \sigma_u^i(s)  \eta(S_u^i(s)|S_u^i(s_0)) = \int^s_{s_0} \frac{d}{dt}\sigma^i_u(t)\Big( \eta(u|S_u^i(t))-\eta(u|S_u^i(s_0)) \Big) dt. 
\eeq
In particular, for any $u\in B$ as in hypothesis $(\mathcal{H})$, there exists $\delta\in (0,\frac{s_0}{2})$ and $k>0$ such that
\begin{align}
\begin{aligned}\label{delta-k}
&q(S_u^1(s),S_u^1(s_0))   -  \sigma_u^1(s)  \eta(S_u^1(s)|S_u^1(s_0)) \le -k |\sigma_u^1(s)- \sigma_u^1(s_0)|^2,\quad\mbox{for}~ |s-s_0|< \delta,\\
&q(S_u^1(s),S_u^1(s_0))   -  \sigma_u^1(s)  \eta(S_u^1(s)|S_u^1(s_0)) \le -k |\sigma_u^1(s)- \sigma_u^1(s_0)|,\quad\mbox{for}~ |s-s_0|\ge \delta.
\end{aligned}
\end{align}
\end{lemma}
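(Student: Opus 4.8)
\textbf{Proof plan for Lemma \ref{lem-v1}.}

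The plan is to establish \eqref{vasseur} first as the fundamental identity, then derive \eqref{vasseur-1} as an immediate consequence, and finally extract the quantitative bounds \eqref{delta-k} using the structural hypotheses in $(\mathcal{H})$. For \eqref{vasseur}, I would fix the vector $v$ and differentiate the left-hand side with respect to $s$, treating $S_u^i(s)$ and $\sigma_u^i(s)$ as $C^1$ functions of $s$ (which is guaranteed by $(\mathcal{H})$). Writing $g(s):= q(S_u^i(s),v) - \sigma_u^i(s)\,\eta(S_u^i(s)|v)$, the derivative $g'(s)$ splits into a term from differentiating through $S_u^i$ and a term from differentiating $\sigma_u^i(s)$. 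The key computation is that, using the compatibility relation $\nabla q = \nabla\eta\,\nabla f$ together with the Rankine–Hugoniot identity $f(S_u^i(s)) - f(u) = \sigma_u^i(s)(S_u^i(s)-u)$ and its $s$-derivative, the $\tfrac{d}{ds}S_u^i$ contributions should collapse, leaving only $g'(s) = \tfrac{d}{ds}\sigma_u^i(s)\,\eta(u|S_u^i(s))$. Integrating from $0$ to $s$ and noting that at $s=0$ we have $S_u^i(0)=u$, so $\eta(S_u^i(0)|v)=\eta(u|v)$ and $q(S_u^i(0),v)=q(u,v)$, yields \eqref{vasseur} directly.

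For \eqref{vasseur-1}, I would apply \eqref{vasseur} twice with the choice $v = S_u^i(s_0)$: once as stated, and once evaluated at $s=s_0$ (which gives the trivial identity $0 = q(u,S_u^i(s_0)) - \sigma_u^i(s_0)\eta(u|S_u^i(s_0)) + \int_0^{s_0}\frac{d}{dt}\sigma_u^i(t)\eta(u|S_u^i(t))\,dt$, since the left side vanishes when its two arguments coincide). Subtracting these, the $q(u,S_u^i(s_0))$ terms cancel and, after regrouping the $\sigma$ factors, the integral $\int_0^s$ combines with $\int_0^{s_0}$ to produce $\int_{s_0}^s \frac{d}{dt}\sigma_u^i(t)\big(\eta(u|S_u^i(t)) - \eta(u|S_u^i(s_0))\big)\,dt$, which is precisely \eqref{vasseur-1}. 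Some care is needed to track the factor $\sigma_u^i(s)$ versus $\sigma_u^i(s_0)$ in the subtraction, but this is a bookkeeping matter.

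The quantitative estimates \eqref{delta-k} are where the structural content of $(\mathcal{H})$ enters, and I expect this to be the main obstacle. Specializing \eqref{vasseur-1} to the $1$-family, the integrand carries the factor $\frac{d}{dt}\sigma_u^1(t)$, which is strictly negative by \eqref{Liu-0}(a), and the factor $\eta(u|S_u^1(t)) - \eta(u|S_u^1(s_0))$, which is increasing in $t$ by the monotonicity \eqref{Liu-0}(b). For $t$ between $s_0$ and $s$, these two factors conspire so that the product $\frac{d}{dt}\sigma_u^1(t)\big(\eta(u|S_u^1(t))-\eta(u|S_u^1(s_0))\big)$ is nonpositive on the whole interval of integration, giving the correct sign. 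To obtain the sharp quadratic bound for $|s-s_0|<\delta$, I would perform a local Taylor expansion: since $\frac{d}{ds}\sigma_u^1$ is continuous and strictly negative, it is bounded away from zero on a compact neighborhood of $s_0$, and $\eta(u|S_u^1(t))-\eta(u|S_u^1(s_0))$ vanishes to first order at $t=s_0$ with a positive derivative; integrating yields a bound of order $|s-s_0|^2$, which one then re-expresses in terms of $|\sigma_u^1(s)-\sigma_u^1(s_0)|^2$ using that $\frac{d}{ds}\sigma_u^1$ is bounded above and below in modulus near $s_0$. For the global regime $|s-s_0|\ge\delta$, the integral accumulates at least a fixed positive amount per unit length away from $s_0$, yielding the linear bound; the constant $k>0$ is chosen as the minimum over the two regimes, uniformly in $u\in B$ by compactness and the joint $C^1$-dependence asserted in $(\mathcal{H})$. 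The delicate point is ensuring $k$ can be taken uniform in both $s_0$ (fixed) and $u\in B$, which relies on the continuity statements in $(\mathcal{H})$ and a compactness argument.
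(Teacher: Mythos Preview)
Your plan is correct and mirrors the paper's approach: \eqref{vasseur} via differentiation in $s$ using the differentiated Rankine--Hugoniot relation (the Lax computation), \eqref{vasseur-1} by applying \eqref{vasseur} twice with $v=S_u^i(s_0)$ at $s$ and at $s_0$ and subtracting, and \eqref{delta-k} via exactly the local Taylor/global monotonicity splitting that the paper carries out in its Appendix. One small correction to your sketch: the derivative of $g(s)=q(S_u^i(s),v)-\sigma_u^i(s)\,\eta(S_u^i(s)|v)$ works out to $g'(s)=\tfrac{d}{ds}\sigma_u^i(s)\big(\eta(u|S_u^i(s))-\eta(u|v)\big)$ rather than $\tfrac{d}{ds}\sigma_u^i(s)\,\eta(u|S_u^i(s))$; the extra $-\eta(u|v)$ term is exactly what, upon integration, produces the factor $\sigma_u^i(s)$ (not $\sigma_u^i(0)$) in front of $\eta(u|v)$ on the right-hand side of \eqref{vasseur}.
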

 The proof of this lemma can be found in \cite{Vasseur-2013}. We refer to the work of Lax \cite{Lax} for the estimate \eqref{vasseur}. And the estimates \eqref{vasseur-1} and \eqref{delta-k} are variations on a crucial lemma of DiPerna \cite{DiPerna}. Note that the discontinuity $(u_l,S_u^i(s),\sigma_u^i(s))$ in \eqref{vasseur} and \eqref{vasseur-1} need not be extremal family (i.e. $1$-family or $n$-family) from the proof of the relation \eqref{vasseur} in \cite{Vasseur-2013}, which is obtained directly from the Rankine-Hugoniot condition, moreover, \eqref{vasseur-1} is obtained by using \eqref{vasseur} twice. We give the proof of \eqref{delta-k} in the Appendix for the reader's convenience.\\

For the proof of Theorem \ref{thm-ext}, we present the following lemma that states both Lemma 5 and Proposition 2 in \cite{Vasseur-2013} as a slightly improved version. In fact, Proposition 2 in \cite{Vasseur-2013} says that \eqref{lem-rh} holds for all $s>0$ satisfying $\sigma_{u}(s)\le \sigma_0$, which means that \eqref{lem-rh} holds for any strong shock $(u,S_{u}^1(s), \sigma_{u}^1(s))$ thanks to the assumption $(\mathcal{H})$ with \eqref{Liu-0}. It turns out that the constraint $\sigma_{u}(s)\le \sigma_0$ can be removed in the following lemma. 
\begin{lemma}\label{lem-v2}
Let $(u_l,u_r, \sigma_{l,r})$ be an entropic $1$-shock such that $u_r=S_{u_l}^1(s_0)$, $\sigma_{l,r}=\sigma_{u_l}^1(s_0)$ for some $s_0>0$, and the corresponding conditions in \eqref{Liu-0} are satisfied.
Then, there exists $\sigma_0\in (\sigma_{l,r}, \lambda_1(u_l))$, $\eps_0>0$, $\beta>0$ and $a_*>0$ verifying the following properties: 
\begin{itemize}
\item For any $u\in B_{\eps_0}(u_l)$,
\begin{align}
\begin{aligned}\label{lem-5}
&\sigma_0 \le \lambda_1(u),\\
&-q(u,u_l) +\sigma_0 \eta(u|u_l) \le -\beta  \eta(u|u_l),\\
&q(u,u_r) - \sigma_0 \eta(u|u_r) \le -\beta  \eta(u|u_r).
\end{aligned}
\end{align}
\item
For any $0<a<a_*$, 
\beq\label{domain-r}
\mbox{the ball $B_{\eps_0}(u_l)$ contains the convex set}~{R}_a:=\{ u ~|~ \eta(u|u_l) \le a\eta(u|u_r) \},
\eeq
and for any $u\in {R}_a$,
\beq\label{lem-rh}
a \Big(q(S_{u}^1(s),u_r) - \sigma_{u}^1(s) \eta(S_{u}^1(s)|u_r) \Big) - q(u|u_l) +\sigma_{u}^1(s) \eta(u|u_l) \le 0,\quad s>0.
\eeq
\end{itemize}

\end{lemma}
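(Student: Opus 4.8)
The plan is to treat the two bullet points separately, and the whole argument hinges on a good choice of the intermediate speed $\sigma_0$. To fix it, I would first observe that, since $\eta(u_l|u_r)>0$, the map $\sigma\mapsto q(u_l|u_r)-\sigma\,\eta(u_l|u_r)$ is affine and strictly decreasing. Evaluating \eqref{vasseur} at $u=u_l$, $i=1$, $s=s_0$ with base point $v=u_r=S_{u_l}^1(s_0)$ (so the left side vanishes) and using $\frac{d}{ds}\sigma_{u_l}^1<0$ from \eqref{Liu-0} gives $q(u_l|u_r)-\sigma_{l,r}\eta(u_l|u_r)=-\int_0^{s_0}\frac{d}{dt}\sigma_{u_l}^1(t)\eta(u_l|S_{u_l}^1(t))\,dt>0$. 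On the other hand, set $\psi(s):=q(u_l|S_{u_l}^1(s))-\lambda_1(u_l)\eta(u_l|S_{u_l}^1(s))$; then $\psi(0)=0$, and differentiating along the Hugoniot curve and substituting $f(u_l)-f(S_{u_l}^1(s))=\sigma_{u_l}^1(s)(u_l-S_{u_l}^1(s))$ yields $\psi'(s)=-(\lambda_1(u_l)-\sigma_{u_l}^1(s))\frac{d}{ds}\eta(u_l|S_{u_l}^1(s))<0$ by \eqref{Lax-0} and \eqref{Liu-0}(b). Hence $q(u_l|u_r)-\lambda_1(u_l)\eta(u_l|u_r)=\psi(s_0)<0$, so the unique root $\sigma_*$ of the affine map satisfies $\sigma_{l,r}<\sigma_*<\lambda_1(u_l)$, and I would fix any $\sigma_0\in(\sigma_*,\lambda_1(u_l))$.

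With $\sigma_0$ chosen, the three inequalities in \eqref{lem-5} follow for suitable $\eps_0,\beta>0$. The bound $\sigma_0\le\lambda_1(u)$ is immediate from continuity of $\lambda_1$ and $\sigma_0<\lambda_1(u_l)$. The second inequality is a local estimate about the base point $u_l$ itself: differentiating the compatibility relation $\nabla q=\nabla\eta\nabla f$ shows that the Hessian at $u_l$ of $q(\cdot|u_l)-\sigma_0\eta(\cdot|u_l)$ is $\nabla^2\eta(u_l)(\nabla f(u_l)-\sigma_0 I)$, whose associated quadratic form is $\sum_i(\lambda_i(u_l)-\sigma_0)|c_i|^2$ in the $\nabla^2\eta(u_l)$-orthonormal eigenbasis of $\nabla f(u_l)$; since $\sigma_0<\lambda_1(u_l)=\min_i\lambda_i(u_l)$ this form is positive definite with a gap, so on a small enough ball $-q(u|u_l)+\sigma_0\eta(u|u_l)\le-\beta\eta(u|u_l)$. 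The third inequality is not local at $u_r$ but a continuity statement centered at $u_l$: by $\sigma_0>\sigma_*$ one has $q(u_l|u_r)-\sigma_0\eta(u_l|u_r)=(\sigma_*-\sigma_0)\eta(u_l|u_r)<0$ strictly, and since $\eta(\cdot|u_r)$ is bounded below on $B_{\eps_0}(u_l)$ (as $u_l\neq u_r$), shrinking $\eps_0$ and $\beta$ yields $q(u|u_r)-\sigma_0\eta(u|u_r)\le-\beta\eta(u|u_r)$ there.

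For the second bullet, the containment \eqref{domain-r} follows from Lemma \ref{lem-L2}: on $\mathcal U_K$ one has $\eta(u|u_l)\ge C_1|u-u_l|^2$ and $\eta(u|u_r)\le M$, so $u\in R_a$ forces $|u-u_l|^2\le aM/C_1$, whence $R_a\subset B_{\eps_0}(u_l)$ once $a<a_*:=C_1\eps_0^2/M$. For \eqref{lem-rh} I would apply the triangle identity \eqref{metric} with $W=u$, $V=u_r$ to the shock $(u,S_u^1(s),\sigma_u^1(s))$: the correction term carries the factor $f(u)-f(S_u^1(s))-\sigma_u^1(s)(u-S_u^1(s))$, which vanishes by Rankine--Hugoniot, so that
\[
q(S_u^1(s)|u_r)-\sigma_u^1(s)\eta(S_u^1(s)|u_r)=\Big(q(S_u^1(s)|u)-\sigma_u^1(s)\eta(S_u^1(s)|u)\Big)+\Big(q(u|u_r)-\sigma_u^1(s)\eta(u|u_r)\Big).
\]
By \eqref{vasseur} with base $u$, the first bracket equals $\int_0^s\frac{d}{dt}\sigma_u^1(t)\,\eta(u|S_u^1(t))\,dt=:-D(u,s)\le0$ thanks to \eqref{Liu-0}(a). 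Writing $\sigma=\sigma_u^1(s)$, the left side of \eqref{lem-rh}, call it $\Phi(u,s)$, becomes $-aD(u,s)+\Xi$ with $\Xi:=a(q(u|u_r)-\sigma\eta(u|u_r))-(q(u|u_l)-\sigma\eta(u|u_l))$; re-centering $\Xi$ at $\sigma_0$ and inserting the second and third inequalities of \eqref{lem-5} gives $\Xi\le-a\beta\eta(u|u_r)-\beta\eta(u|u_l)+(\sigma_0-\sigma)(a\eta(u|u_r)-\eta(u|u_l))$.

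The final step splits on the sign of $\sigma_0-\sigma_u^1(s)$, and this is where the main difficulty lies. When $\sigma_u^1(s)\ge\sigma_0$ (weak shocks), the factor $a\eta(u|u_r)-\eta(u|u_l)\ge0$ on $R_a$ makes the cross term nonpositive, so $\Xi\le0$ and hence $\Phi\le0$ at once; this is the genuinely new, easy half, which is exactly what removes the constraint $\sigma_u^1(s)\le\sigma_0$. The hard half is $\sigma_u^1(s)<\sigma_0$ (strong shocks), where the cross term is positive and must be absorbed by the dissipation $-aD(u,s)$. The crude bound $D(u,s)\ge\eta(u|S_u^1(t_*))(\sigma_0-\sigma)$, with $\sigma_u^1(t_*)=\sigma_0$, is insufficient here, because for $u=u_l$ the inequality is in fact \emph{critical}: a direct computation from \eqref{vasseur-1} gives $\Phi(u_l,s)=a\int_{s_0}^s\frac{d}{dt}\sigma_{u_l}^1(t)\big(\eta(u_l|S_{u_l}^1(t))-\eta(u_l|u_r)\big)\,dt\le0$, with equality exactly at $s=s_0$. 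Controlling the large-$s$ regime uniformly in $u\in R_a$ therefore requires the sharp DiPerna-type dissipation estimate \eqref{delta-k} of Lemma \ref{lem-v1}, which bounds the lost entropy from below linearly in $|\sigma_u^1(s)-\sigma_0|$; this is precisely the content of Proposition 2 of \cite{Vasseur-2013}, and I would invoke it (with the constants $\sigma_0,\eps_0,\beta,a_*$ fixed above) to close this case and thereby establish \eqref{lem-rh} for all $s>0$.
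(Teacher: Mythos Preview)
Your plan for Step A (the three inequalities in \eqref{lem-5}) and Step B (the containment $R_a\subset B_{\eps_0}(u_l)$) is essentially what the paper does; your ODE computation $\psi'(s)=-(\lambda_1(u_l)-\sigma_{u_l}^1(s))\frac{d}{ds}\eta(u_l|S_{u_l}^1(s))$ is a clean substitute for the paper's appeal to \eqref{vasseur-1}. One small circularity to fix in Step B: you invoke the bounds of Lemma \ref{lem-L2} ``on $\mathcal U_K$'' to show $R_a$ is small, but $R_a$ is not a priori contained in any fixed bounded set. The paper avoids this by rewriting the constraint $\eta(u|u_l)\le a\eta(u|u_r)$ (for $a<1$) as $\eta(u|u_l)\le \frac{a}{1-a}(\text{affine in }u)$, then using convexity/connectedness to bootstrap from $R_a\cap B_{\eps_0}(u_l)$ to $R_a$. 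This is easy to insert.

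For Step C your route genuinely differs from the paper's. The paper decomposes $q(S_u^1(s)|u_r)-\sigma_u^1(s)\eta(S_u^1(s)|u_r)$ through the \emph{far} point $S_u^1(s_0)$ (so as to feed directly into the DiPerna estimate \eqref{delta-k}), and then splits the argument according to $s\gtrless s_0/2$; both halves use \eqref{delta-k}, and the constants $\sigma_0,\eps_0$ are further tightened (see \eqref{sigma-con}--\eqref{later-c}) precisely to handle the small-$s$ regime. You instead decompose through the \emph{near} point $u$, which kills the Rankine--Hugoniot correction term in \eqref{metric} and reduces the left side of \eqref{lem-rh} to $-aD(u,s)+\Xi$; you then split according to $\sigma_u^1(s)\gtrless\sigma_0$. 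Your weak-shock half ($\sigma_u^1(s)\ge\sigma_0$) is a genuinely cleaner argument than anything in the paper: it uses only \eqref{lem-5} and the sign $a\eta(u|u_r)\ge\eta(u|u_l)$ on $R_a$, and it isolates exactly the ``new'' content of the lemma (removal of the constraint $\sigma_u^1(s)\le\sigma_0$ from Proposition~2 of \cite{Vasseur-2013}). For the strong-shock half you correctly identify that the estimate is critical at $(u,s)=(u_l,s_0)$ and defer to \cite{Vasseur-2013}; that is legitimate, but be aware that the $\sigma_0,\eps_0,a_*$ required there are not arbitrary in $(\sigma_*,\lambda_1(u_l))$---they must be taken close enough to $\lambda_1(u_l)$ (respectively small enough) to satisfy constraints analogous to \eqref{sigma-con}--\eqref{later-c}. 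So the correct order is to fix those constants from the strong-shock argument first, and then observe that your weak-shock argument works for \emph{any} such choice; your phrase ``with the constants \dots\ fixed above'' should be read in that order.
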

\begin{proof}
{\bf Step A) proof for \eqref{lem-5}} : 
We begin by using \eqref{vasseur-1} with $u=u_l$ and $s=0$ to get 
\[
q(u_l,u_r) - \lambda_1(u_l) \eta(u_l|u_r)<0,
\]
where we have used the hypothesis \eqref{Liu-0}. Since this inequality is strict, we can choose $\sigma_0$ sufficiently close to $\lambda_1(u_l)$ with $\sigma_0\in (\sigma_{l,r}, \lambda_1(u_l))$ and $\beta>0$ sufficiently small such that
\[
q(u_l,u_r) - (\sigma_0-\beta) \eta(u_l|u_r)<0,
\]
which gives
\[
q(u_l,u_r) - \sigma_0\eta(u_l|u_r)<-\beta\eta(u_l|u_r).
\]
Then, using the continuity of $q(\cdot, u_r)$, $\eta(\cdot|u_r)$ and $\lambda_1(\cdot)$, we can choose $\eps_0$ sufficiently small such that for all $u\in B_{\eps_0}(u_l)$,
\[
q(u,u_r) - \sigma_0\eta(u|u_r)<-\beta\eta(u|u_r),\quad\mbox{and}\quad \sigma_0 \le \lambda_1(u).
\]
On the other hand, we use Taylor expansion at $u_l$ to get
\[
-q(u,u_l) +\sigma_0 \eta(u|u_l)= (u -u_l)^{T} \nabla^2 \eta(u_l) (\sigma_0 I -\nabla f(u_l)) (u -u_l) + \mathcal{O}(|u-u_l|^3).
\]
Here, since the entropy $\eta$ is strictly convex, $\nabla^2\eta(u_l)$ is symmetric and strictly positive, and $ \nabla^2 \eta(u_l)\nabla f(u_l))$ is symmetric. Thus those matrices are diagonalizable in the same basis, which gives
\[ 
\nabla^2 \eta(u_l)\nabla f(u_l)) \ge \lambda_1(u_l) \nabla^2 \eta(u_l).
\]
This and $\lambda_1(u_l)>\sigma_0$ imply that for all $u\in B_{\eps_0}(u_l)$,
\begin{align*}
\begin{aligned}
-q(u,u_l) +\sigma_0 \eta(u|u_l)&\le -(\lambda_1(u_l)-\sigma_0) (u -u_l)^{T} \nabla^2 \eta(u_l)  (u -u_l) + \mathcal{O}(|u-u_l|^3)\\
&\le -(\lambda_1(u_l)-\sigma_0)\eta(u|u_l)+ \mathcal{O}(|u-u_l|^3)\\
&\le -\frac{\lambda_1(u_l)-\sigma_0}{2}\eta(u|u_l),
\end{aligned}
\end{align*} 
where we have used the smallness of $\eps_0$ in the last inequality. \\
{\bf Step B) proof for \eqref{domain-r}} : Notice that
\begin{align}
\begin{aligned}\label{a-1}
&\mbox{for}~a<1,\quad\eta(u|u_l) \le a\eta(u|u_r) \quad\mbox{is equivalent to}\\
&\eta(u) \le \frac{1}{1-a} (\eta(u_l)-a\eta(u_r) -\nabla \eta(u_l)\cdot u_l + a\nabla \eta(u_r)\cdot u_r +(\nabla \eta(u_l)-a\nabla \eta(u_r))\cdot u),
\end{aligned} 
\end{align}
where the right hand side of the second inequality above is linear in $u$. Thus, the convexity of $\eta$ implies the convexity of ${R}_a=\{u~|~ \eta(u|u_l) \le a\eta(u|u_r)  \}$. We take $a_*<\frac{1}{2}$ to rewrite the second inequality above as
\begin{align*}
\begin{aligned}
\eta(u|u_l) &\le \frac{a}{1-a} (\eta(u_l)-\eta(u_r) -\nabla \eta(u_l)\cdot u_l + \nabla \eta(u_r)\cdot u_r +(\nabla \eta(u_l)-\nabla \eta(u_r))\cdot u)\\
&\le Ca(1+|u|),\quad\mbox{for all} ~0<a<a_*.
\end{aligned} 
\end{align*}
This yields together with Lemma \ref{lem-L2} that for all $u\in R_a\cap B_{\eps_0}(u_l)$,
\[
|u-u_l|^2 \le Ca(1+|u|)\le C_*a.
\]
Taking $a_*$ sufficiently small as $a_*<\frac{\eps_0^2}{2 C_*}$ such that for any $a<a_*$ and $u\in R_a\cap B_{\eps_0}(u_l)$,
\[
|u-u_l|^2 \le C_*a < \frac{\eps_0^2}{2},
\]
which implies that $B_{\eps_0}(u_l)$ strictly contains $R_a\cap B_{\eps_0}(u_l)$. Since $R_a$ is convex, so connected, thus we have
\[
R_a = R_a\cap B_{\eps_0}(u_l).
\]
Therefore, for any $a<a_*$,
\[
R_a \subset  B_{\eps_0}(u_l).
\]

{\bf Step C) proof of \eqref{lem-rh}} :  
First of all, we take $\sigma_0$ closer to $\lambda_1(u_l)$ and $\eps_0$ smaller than those chosen in Step A, such that for all $u\in  B_{\eps_0}(u_l)$
\beq\label{sigma-con}
\sigma^1_{u}(s)\le \sigma_0,\quad s\ge \frac{s_0}{2},
\eeq
and
\beq\label{later-c}
\lambda_1(u)-\sigma_0<\frac{k}{4 C_1} |\sigma_{u}^1(\frac{s_0}{2})-\sigma_{u}^1(s_0)|,
\eeq
where $k>0$ is the constant as in \eqref{delta-k} and $C_1$ is appeared in \eqref{C_1}.\\  
Indeed, \eqref{sigma-con} can be justified thanks to the assumption $(\mathcal{H})$, in which $(s,u) \rightarrow \sigma_u(s)$ is $C^1$-function, and $\frac{d}{ds}\sigma_{u}^1(s) < 0$ with $\sigma_{u}^1(0)=\lambda_1(u)$.\\
Let us first show \eqref{lem-rh} for all $s\ge \frac{s_0}{2}$. We use \eqref{metric} to get
\begin{align*}
\begin{aligned}
&q(S_{u}^1(s),u_r) - \sigma_{u}^1(s) \eta(S_{u}^1(s)|u_r) \\
&\quad = -\Big(q(u_r,S_{u}^1(s_0)) - \sigma_{u}^1(s) \eta(u_r|S_{u}^1(s_0))\Big)+\Big(q(S_{u}^1(s),S_{u}^1(s_0)) - \sigma_{u}^1(s) \eta(S_{u}^1(s)|S_{u}^1(s_0))\Big)\\
&\qquad + \Big(\nabla\eta (u_r) -\nabla\eta (S_{u}^1(s_0))  \Big) \Big(f(u_r)-f(S_{u}^1(s)) -\sigma_u^1 (s) (u_r-S_{u}^1(s)) \Big),
\end{aligned}
\end{align*} 
where $u_r=S_{u_l}^1(s_0)$. By using Rankine-Hugoniot conditions
\begin{align*}
\begin{aligned}
&f(u_r)-f(u_l)=\sigma_{u_l}^1(s_0) (u_r-u_l),\\
&f(S_{u}^1(s))-f(u)= \sigma_u^1 (s) (S_{u}^1(s)-u),\\  
\end{aligned}
\end{align*} 
we rewrite it as
\begin{align*}
\begin{aligned}
&q(S_{u}^1(s),u_r) - \sigma_{u}^1(s) \eta(S_{u}^1(s)|u_r) \\
&\quad = \Big(-q(u_r,S_{u}^1(s_0)) + \sigma_{u}^1(s) \eta(u_r|S_{u}^1(s_0))\Big)+\Big(q(S_{u}^1(s),S_{u}^1(s_0)) - \sigma_{u}^1(s) \eta(S_{u}^1(s)|S_{u}^1(s_0))\Big)\\
&\qquad + \Big(\nabla\eta (u_r) -\nabla\eta (S_{u}^1(s_0))  \Big) \Big(f(u_l)-f(u) -\sigma_u^1 (s) (u_l-u) + (\sigma_{u_l}^1 (s_0)-\sigma_u^1 (s)) (u_r-u_l) \Big)\\
&\quad=:I_1 +I_2+I_3.
\end{aligned}
\end{align*} 
Since $u\rightarrow \sigma_{u}^1(s_0)$ and $u\rightarrow S_{u}^1(s_0)$ are $C^1$ and bounded in $B_{\eps_0}(u_l)$, we have
\begin{align*}
\begin{aligned}
& |q(S_{u_l}^1(s_0),S_{u}^1(s_0))|\le C |S_{u_l}^1(s_0)-S_{u}^1(s_0)|^2\le C |u_l- u|^2,\\
& \eta(S_{u_l}^1(s_0)|S_{u}^1(s_0))\le C |S_{u_l}^1(s_0)-S_{u}^1(s_0)|^2\le C |u_l- u|^2,\\
&|\nabla\eta (S_{u_l}^1(s_0)) -\nabla\eta (S_{u}^1(s_0))| \le C |S_{u_l}^1(s_0)-S_{u}^1(s_0)|\le C |u_l- u|,\\
& |\sigma_{u_l}^1 (s_0)-\sigma_u^1 (s_0)| \le C |u_l- u|.
\end{aligned}
\end{align*} 
This yields
\begin{align*}
\begin{aligned}
I_1 &=  -q(S_{u_l}^1(s_0),S_{u}^1(s_0)) + \sigma_{u}^1(s_0) \eta(S_{u_l}^1(s_0)|S_{u}^1(s_0))+(\sigma_{u}^1(s)-\sigma_{u}^1(s_0) )\eta(S_{u_l}^1(s_0)|S_{u}^1(s_0))\\
&\le C |u_l- u|^2 (1+ |\sigma_{u}^1(s)-\sigma_{u}^1(s_0)|),\\
\end{aligned}
\end{align*} 
and
\begin{align*}
\begin{aligned}
I_3 &= \Big(\nabla\eta (S_{u_l}^1(s_0)) -\nabla\eta (S_{u}^1(s_0))  \Big) \Big(f(u_l)-f(u) -\sigma_u^1 (s_0) (u_l-u)
-(\sigma_u^1 (s)-\sigma_u^1 (s_0)) (u_l-u)\\
&\quad+ (\sigma_{u_l}^1 (s_0)-\sigma_u^1 (s_0)) (u_r-u_l) -(\sigma_{u}^1 (s)-\sigma_u^1 (s_0)) (u_r-u_l) \Big),\\
&\le C |u_l- u| ( |u_l- u| + |\sigma_u^1 (s)-\sigma_u^1 (s_0)| |u_l- u| +|\sigma_u^1 (s)-\sigma_u^1 (s_0)| ).
\end{aligned}
\end{align*}
Thus using \eqref{delta-k}, if $|s-s_0|< \delta$, we have
\begin{align*}
\begin{aligned}
&q(S_{u}^1(s),u_r) - \sigma_{u}^1(s) \eta(S_{u}^1(s)|u_r)\\
&\quad\le -k |\sigma_{u}^1(s)-\sigma_{u}^1(s_0)|^2 \\
&\qquad + C|u-u_l|^2 (1+|\sigma_{u}^1(s)-\sigma_{u}^1(s_0)|) + C|u-u_l| |\sigma_{u}^1(s)-\sigma_{u}^1(s_0)|\\
&\quad\le C(|u-u_l|^2 +|u-u_l|^4) \le C|u-u_l|^2, 
\end{aligned}
\end{align*} 
where we have used Young's inequality and $u\in B_{\eps_0}(u_l)$.\\
If $|s-s_0|\ge \delta$, then we have
\begin{align*}
\begin{aligned}
&q(S_{u}^1(s),u_r) - \sigma_{u}^1(s) \eta(S_{u}^1(s)|u_r)\\
&\quad\le -k |\sigma_{u}^1(s)-\sigma_{u}^1(s_0)| \\
&\qquad + C|u-u_l|^2 (1+|\sigma_{u}^1(s)-\sigma_{u}^1(s_0)|) + C|u-u_l| |\sigma_{u}^1(s)-\sigma_{u}^1(s_0)|\\
&\quad\le C|u-u_l|^2, 
\end{aligned}
\end{align*} 
where we have used smallness of $\eps_0$ as $\eps_0\ll k$.\\
Therefore, we use \eqref{lem-5} and \eqref{sigma-con} to get
\begin{align*}
\begin{aligned}
&a \Big(q(S_{u}^1(s),u_r) - \sigma_{u}^1(s) \eta(S_{u}^1(s)|u_r) \Big) - q(u|u_l) +\sigma_{u}^1(s) \eta(u|u_l)\\
&\le aC|u-u_l|^2 - q(u|u_l) +\sigma_0 \eta(u|u_l)\\
&\le aC|u-u_l|^2 + \beta \eta(u|u_l).   
\end{aligned}
\end{align*}
Using Lemma \ref{lem-L2} and taking $a_*$ small enough such that $Ca_* \le \beta$ and still $R_{a_*}\subset B_{\eps_0}(u_l)$, we end up with \eqref{lem-rh} for all $a<a_*$ and $s\ge \frac{s_0}{2}$.\\
On the other hand, when $s< \frac{s_0}{2}$, since $\delta<\frac{s_0}{2}$ as in \eqref{delta-k}, we have $|s-s_0|>\delta$. Thus using smallness of $\eps_0$, we have
\begin{align*}
\begin{aligned}
&q(S_{u}^1(s),u_r) - \sigma_{u}^1(s) \eta(S_{u}^1(s)|u_r)\\
&\quad\le -k |\sigma_{u}^1(s)-\sigma_{u}^1(s_0)| \\
&\qquad + C|u-u_l|^2 (1+|\sigma_{u}^1(s)-\sigma_{u}^1(s_0)|) + C|u-u_l| |\sigma_{u}^1(s)-\sigma_{u}^1(s_0)|\\
&\quad\le -\frac{k}{2} |\sigma_{u}^1(s)-\sigma_{u}^1(s_0)|+ C|u-u_l|^2. 
\end{aligned}
\end{align*} 
Using \eqref{Liu-0} and smallness of $\eps_0$ again, we have that for all $s< \frac{s_0}{2}$,
\begin{align*}
\begin{aligned}
&q(S_{u}^1(s),u_r) - \sigma_{u}^1(s) \eta(S_{u}^1(s)|u_r)\\
&\quad\le -\frac{k}{2} |\sigma_{u}^1(\frac{s_0}{2})-\sigma_{u}^1(s_0)|+ C|u-u_l|^2\\
&\quad\le -\frac{k}{4} |\sigma_{u}^1(\frac{s_0}{2})-\sigma_{u}^1(s_0)|.
\end{aligned}
\end{align*}
Since $u\in R_a\subset B_{\eps_0}(u_l)$, we use \eqref{Liu-0} and \eqref{lem-5} to estimate
\begin{align}
\begin{aligned}\label{C_1}
- q(u|u_l) +\sigma_{u}^1(s) \eta(u|u_l) &\le - q(u|u_l) +\lambda_1(u) \eta(u|u_l)\\
&= - q(u|u_l) +\sigma_0 \eta(u|u_l) +(\lambda_1(u)-\sigma_0)\eta(u|u_l) \\
&\le (\lambda_1(u)-\sigma_0)\eta(u|u_l)\\
&\le a(\lambda_1(u)-\sigma_0)\eta(u|u_r)\\
&\le aC_1(\lambda_1(u)-\sigma_0).
\end{aligned}
\end{align} 
Therefore, for all $s<\frac{s_0}{2}$,
\begin{align*}
\begin{aligned}
&a \Big(q(S_{u}^1(s),u_r) - \sigma_{u}^1(s) \eta(S_{u}^1(s)|u_r) \Big) - q(u|u_l) +\sigma_{u}^1(s) \eta(u|u_l)\\
&\quad\le -a\Big(\frac{k}{4} |\sigma_{u}^1(\frac{s_0}{2})-\sigma_{u}^1(s_0)| - C(\lambda_1(u)-\sigma_0) \Big).
\end{aligned}
\end{align*} 
Hence we use \eqref{later-c} to conclude \eqref{lem-rh}. 
\end{proof}

\subsection{Main result}
We here revisit the main result in \cite{Vasseur-2013} showing that the hypotheses $(\mathcal{H})$ (resp. $(\mathcal{H}^*)$) implies {\it{a-SRES}}, thus $a$-contraction of $1$-shock (resp. $n$-shock) thanks to Theorem \ref{thm-cont-1}. 
\begin{theorem}\label{thm-ext}
Suppose that a system \eqref{main} satisfies $(\mathcal{H})$ and $(u_l,u_r, \sigma_{l,r})$ is a $1$-shock such that $u_r=S_{u_l}^1(s_0)$, $\sigma_{l,r}=\sigma_{u_l}^1(s_0)$ for some $s_0>0$. Then, there exists a small constant $0<a_*<1$ such that for all $0<a\le a_*$, $(u_l,u_r, \sigma_{l,r})$ is {\it{a-SRES}}, thus satisfies $a$-contraction.\\
As a dual result, if we suppose that a system \eqref{main} satisfies $(\mathcal{H}^*)$ and $(u_l,u_r, \sigma_{l,r})$ is a $n$-shock such that $u_l=S_{u_r}^n(s_0)$, $\sigma_{l,r}=\sigma_{u_r}^n(s_0)$ for some $s_0>0$. Then, there exists a large constant $a^*>1$ such that for all $a\ge a^*$, $(u_l,u_r, \sigma_{l,r})$ is {\it{a-SRES}}, thus satisfies $a$-contraction.
\end{theorem}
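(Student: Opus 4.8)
The plan is to prove the theorem by verifying the two conditions $(\mathcal{H}1)$ and $(\mathcal{H}2^*)$ in the definition of \emph{a-SRES}, so that Theorem \ref{thm-cont-1} applies to give the $a$-contraction. The key technical input is Lemma \ref{lem-v2}, whose estimates \eqref{lem-5} and \eqref{lem-rh} have been established precisely for this purpose. First I would fix $a_*$ to be the constant produced by Lemma \ref{lem-v2} (shrinking it further if needed), and verify $(\mathcal{H}1)$: for $u\in\Sigma_a$, i.e. $\eta(u|u_l)=a\eta(u|u_r)$, one has $u\in R_a\subset B_{\eps_0}(u_l)$ by \eqref{domain-r}, and I would combine the second and third inequalities of \eqref{lem-5} to bound $D_{sm}(u_{l,r};u)=aq(u,u_r)-q(u,u_l)$. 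Indeed, writing $aq(u,u_r)-q(u,u_l)=a\big(q(u,u_r)-\sigma_0\eta(u|u_r)\big)-\big(q(u,u_l)-\sigma_0\eta(u|u_l)\big)+\sigma_0\big(a\eta(u|u_r)-\eta(u|u_l)\big)$, the last term vanishes on $\Sigma_a$, and \eqref{lem-5} makes the first two terms negative, giving $D_{sm}\le -\beta(a\eta(u|u_r)+\eta(u|u_l))\le 0$.

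The heart of the proof is $(\mathcal{H}2^*)$, and this is where I expect the main obstacle to lie. I would take an arbitrary entropic discontinuity $(u_-,u_+,\sigma_\pm)$ with $u_-,u_+$ on the same side of $\Sigma_a$ and estimate $D_{RH}(u_{l,r},u_\pm)=aq(u_+,u_r)-q(u_-,u_l)-\sigma_\pm\big(a\eta(u_+|u_r)-\eta(u_-|u_l)\big)$. The natural splitting is $D_{RH}=a\big(q(u_+,u_r)-\sigma_\pm\eta(u_+|u_r)\big)-\big(q(u_-,u_l)-\sigma_\pm\eta(u_-|u_l)\big)$. The difficulty is that there is no a priori control on where $u_-,u_+$ sit unless they lie inside $R_a$, and that a general entropic discontinuity need not be a $1$-shock of the Hugoniot family $S_{u}^1$. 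Here I would exploit the hypothesis $(\mathcal{H})$ together with the Lax/Liu ordering \eqref{Lax-0}: since $\sigma_0\le\lambda_1(u)$ for all $u\in B_{\eps_0}(u_l)$ by \eqref{lem-5}, any entropic discontinuity with left state in $R_a$ must either be a $1$-shock connecting $u_-$ to $u_+=S_{u_-}^1(s)$, to which \eqref{lem-rh} applies directly, or have speed $\sigma_\pm\ge\lambda_1\ge\sigma_0$, which is the regime controlled by \eqref{lem-5}.

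Concretely, for the $1$-shock case I would apply \eqref{lem-rh} with $u=u_-\in R_a$ and $S_{u_-}^1(s)=u_+$, which yields $a\big(q(u_+,u_r)-\sigma_\pm\eta(u_+|u_r)\big)-q(u_-|u_l)+\sigma_\pm\eta(u_-|u_l)\le 0$, i.e. exactly $D_{RH}\le 0$; note that \eqref{lem-rh} was proved for \emph{all} $s>0$ without the constraint $\sigma_u^1(s)\le\sigma_0$, which is why the improved Lemma \ref{lem-v2} is needed here rather than the original Proposition 2 of \cite{Vasseur-2013}. For discontinuities that are not $1$-shocks, the Lax condition \eqref{Lax-0} forces $\sigma_\pm\ge\lambda_1\ge\sigma_0$, so I would replace $\sigma_\pm$ by $\sigma_0$ using the sign of the relative entropies (both $u_\pm$ being in $R_a$ makes $a\eta(u_+|u_r)-\eta(u_+|u_l)$ and $a\eta(u_-|u_r)-\eta(u_-|u_l)$ nonnegative) and then close the estimate with the pointwise bounds \eqref{lem-5}. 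In all cases $D_{RH}\le 0$, so $(\mathcal{H}2^*)$ holds and $(u_l,u_r,\sigma_{l,r})$ is \emph{a-SRES}; Theorem \ref{thm-cont-1} then gives $a$-contraction. The dual $n$-shock statement follows immediately from Remark \ref{remark-dual}, applying the $1$-shock result to $\tilde u(t,x)=u(t,-x)$, which converts an \emph{a-SRES} $1$-shock at weight $a$ into an \emph{$(1/a)$-SRES} $n$-shock, so the large-weight threshold is $a^*=1/a_*$.
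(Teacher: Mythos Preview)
Your verification of $(\mathcal{H}1)$ is correct and matches the paper. The genuine gap is in your treatment of $(\mathcal{H}2^*)$, and it stems from a misreading of the condition. The requirement in $(\mathcal{H}2^*)$ is that $\eta(u_-|u_l)-a\eta(u_-|u_r)$ and $a\eta(u_+|u_r)-\eta(u_+|u_l)$ have the \emph{same} sign; since the second expression reverses the roles, this means exactly that $u_-$ and $u_+$ lie on \emph{opposite} sides of $\Sigma_a$ (see the paper's Remark after Definition~\ref{def-res}). Thus your assertion that ``both $u_\pm$ being in $R_a$'' is never the case: exactly one of $u_-,u_+$ lies in $R_a\subset B_{\eps_0}(u_l)$, and the other may be arbitrarily far from $u_l$. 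This invalidates your plan to apply the pointwise bounds \eqref{lem-5} to both states, because \eqref{lem-5} is only available on $B_{\eps_0}(u_l)$. It also breaks your 1-shock argument in the subcase $u_-\notin R_a$, since \eqref{lem-rh} requires the left state to lie in $R_a$.

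The ingredient you are missing is the combination of the metric identity \eqref{metric} with the entropy inequality for the discontinuity $(u_-,u_+,\sigma_\pm)$ itself, namely $q(u_+,u_-)-\sigma_\pm\eta(u_+|u_-)\le 0$. In the case $u_-\in R_a$, $u_+\notin R_a$ (the paper's Step~B), one applies \eqref{metric} to write $q(u_+,u_r)-\sigma_\pm\eta(u_+|u_r)$ in terms of $q(u_+,u_-)-\sigma_\pm\eta(u_+|u_-)$ plus quantities depending only on $u_-$; the entropy inequality discards the $u_+$-term and everything that remains is controlled because $u_-\in R_a$. In the reversed case $u_-\notin R_a$, $u_+\in R_a$ (Step~C), one instead expands $-q(u_-,u_l)+\sigma_\pm\eta(u_-|u_l)$ via \eqref{metric} through $u_+$, uses $-q(u_-,u_+)+\sigma_\pm\eta(u_-|u_+)\le 0$, and is left with terms depending only on $u_+\in R_a$. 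In both cases one then needs the quantitative negativity $q(u_l,u_r)-\lambda_1(u_l)\eta(u_l|u_r)=-C_{l,r}<0$ (from \eqref{vasseur-1}) together with $\sigma_\pm\ge\lambda_1$ from \eqref{Lax-0} and a Taylor estimate of order $|u-u_l|^3$ for $-q(u,u_l)+\lambda_1(u)\eta(u|u_l)$; these produce a leading term $-\tfrac{a}{2}C_{l,r}$ that absorbs the $O(a^{3/2})$ errors for $a$ small. None of this is captured by your proposed substitution $\sigma_\pm\to\sigma_0$ plus \eqref{lem-5}. The dual statement for $n$-shocks via Remark~\ref{remark-dual} is fine.
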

\begin{proof}
By Remark \ref{remark-dual}, we only prove that a given $1$-shock $(u_l,u_r, \sigma_{l,r})$ is {\it{a-SRES}}, which implies $a$-contraction by Theorem \ref{thm-cont-1}.\\
$\bullet$ {\bf Step A} (Verifying $(\mathcal{H}1)$) : 
We take $\sigma_0$, $\eps_0$ and $a_*$ as in Lemma \ref{lem-v2} such that for any $0<a<a_*$, \eqref{domain-r} is satisfied, which yields 
\[
\Sigma_a:=\{ u ~|~ \eta(u|u_l) = a\eta(u|u_r) \} \subset B_{\eps_0}(u_l).
\]
Therefore, using \eqref{lem-5}, we have that for any $u\in \Sigma_a$,
 \begin{align*}
\begin{aligned}
D_{sm}(u_{l,r};u) &= a q(u|u_r) - q(u|u_l)\\
&= a q(u|u_r) - q(u|u_l) +\sigma_0 (\eta(u|u_l) - a\eta(u|u_r)) \\
&= a \Big(q(u|u_r) -\sigma_0 \eta(u|u_r)\Big) -q(u|u_l)+\sigma_0 \eta(u|u_l)\\
&\le 0.
\end{aligned}
\end{align*}
$\bullet$ {\bf Step B} (Verifying $(\mathcal{H}2)$) : 
For any $1\le i\le n$, let $(u_-,u_+,\sigma_{\pm})$ be any $i$-th entropic discontinuity satisfying
\beq\label{1-case}
\eta(u_-|u_l) < a \eta(u_-|u_r) ~\mbox{and}~ \eta(u_+|u_l) > a \eta(u_+|u_r).
\eeq
Then by Lemma \ref{lem-L2}, the distance $|u_- - u_l|$ is estimated by weight $a$ as follows: 
\beq\label{a-small}
|u_- - u_l| \le C \sqrt{\eta(u_- | u_l)} < C \sqrt{a \eta(u_-|u_r)} \le C\sqrt{a}.
\eeq
For such shocks $(u_-,u_+,\sigma_{\pm})$, we consider
\begin{align}
\begin{aligned}\label{D-RH}
D_{RH}(u_{l,r},u_{\pm}) &= a \underbrace{\Big(q(u_+|u_r) - \sigma_{\pm}  \eta(u_+|u_r) \Big)}_{\mathcal{R}}+ \underbrace{\Big(- q(u_-|u_l) +\sigma_{\pm} \eta(u_-|u_l)\Big)}_{\mathcal{L}}.
\end{aligned}
\end{align}
{\bf Case of the $1$-shock $(u_-,u_+,\sigma_{\pm})$} : 
If $(u_-,u_+,\sigma_{\pm})$ is a $1$-shock, we take $\eps_0>0$ and $a_*>0$ as in Lemma \ref{lem-v2} such that \eqref{lem-rh} holds for all $u_-\in R_a\subset B_{\eps_0}(u_l)$, which yields
\[
D_{RH}(u_{l,r},u_{\pm})\le 0.
\]
where $u_+=S_{u_-}^1(s)$ and $\sigma_{\pm}=\sigma_{u_-}^1(s)$.\\
{\bf Case of other families $i\ge 2$} :
If $(u_-,u_+,\sigma_{\pm})$ is any $i$-th entropic discontinuity for $i\ge 2$, we need to compute $\mathcal{L}$ and $\mathcal{R}$ in \eqref{D-RH} further as follows.\\
To estimate $\mathcal{L}$, we use Taylor expansion with respect to $u_-$ at $u_l$, to get
\begin{align}
\begin{aligned}\label{same-1}
&-q(u_-|u_l) +  \lambda_1(u_-)  \eta(u_-|u_l) \\
&\quad=  (u_- -u_l)^{T} \nabla^2 \eta(u_l) (\lambda_1(u_l) I -\nabla f(u_l)) (u_- -u_l) + \mathcal{O}(|u_--u_l|^3).
\end{aligned}
\end{align}
Since the entropy $\eta$ is strictly convex, $\nabla^2\eta(u_l)$ is symmetric, strictly positive and $ \nabla^2 \eta(u_l)\nabla f(u_l))$ is symmetric. Thus those matrices are diagonalizable in the same basis, which gives
\[ 
\nabla^2 \eta(u_l)\nabla f(u_l)) \ge \lambda_1(u_l) \nabla^2 \eta(u_l).
\]
Thus together with \eqref{a-small}, we have
\[
-q(u_-|u_l) +  \lambda_1(u_-)  \eta(u_-|u_l) \le Ca^{3/2},
\]
which yields
\begin{align}
\begin{aligned}\label{L-est}
\mathcal{L} &=-q(u_-|u_l) +  \lambda_1(u_-)  \eta(u_-|u_l) + (\sigma_{\pm} -  \lambda_1(u_-))  \eta(u_-|u_l)  \\
&\le (\sigma_{\pm} -  \lambda_1(u_-))  \eta(u_-|u_l) +Ca^{3/2}.
\end{aligned}
\end{align}
On the other hand, we use the identity \eqref{metric} and Rankine-Hugoniot condition to have
\begin{align*}
\begin{aligned}
\mathcal{R} &= \Big( q(u_+|u_-) - \sigma_{\pm}  \eta(u_+|u_-) \Big)+ \Big( q(u_-|u_r) - \sigma_{\pm}  \eta(u_-|u_r)\Big)\\
&\quad -  (d\eta(u_-)-d\eta(u_r)) \cdot \Big(f(u_-)-f(u_+) -\sigma_{\pm} (u_--u_+) \Big)\\
&= \Big( q(u_+|u_-) - \sigma_{\pm}  \eta(u_+|u_-) \Big)+ \Big( q(u_-|u_r) - \sigma_{\pm}  \eta(u_-|u_r)\Big)\\
&=: I_1 +I_2.
\end{aligned}
\end{align*}
Since $(u_-,u_+,\sigma_{\pm})$ is entropic discontinuity, it follows from \eqref{f-q} that $I_1\le 0$.\\
Using \eqref{metric} again, we have
\begin{align*}
\begin{aligned}
I_2 &= \Big( q(u_-|u_l) - \sigma_{\pm}  \eta(u_-|u_l) \Big)+ \Big( q(u_l|u_r) - \sigma_{\pm}  \eta(u_l|u_r)\Big)\\
&\quad -  (d\eta(u_l)-d\eta(u_r)) \cdot \Big(f(u_l)-f(u_-) -\sigma_{\pm} (u_l -u_-) \Big)\\
&=: I_{21} +I_{22} +I_{23}.  
\end{aligned}
\end{align*}
We use \eqref{a-small} to get
\begin{align*}
\begin{aligned}
&I_{21} \le C|u_- -u_l|^2\le Ca,\\
&I_{23} \le C|u_- -u_l|\le C\sqrt{a}.
\end{aligned}
\end{align*}
To estimate $I_{22}$, we apply \eqref{vasseur-1} with $u=u_l$ and $s=0$ to the first shock $(u_l,u_r,\sigma_{l,r})$ so that
\[
q(u_l|u_r) - \lambda_1(u_l)  \eta(u_l|u_r)= \int^0_{s_0} \frac{d}{dt}\sigma^1_{u_1}(t)\Big( \eta(u_l|S_{u_l}^1(t))-\eta(u_l|S_{u_l}^1(s_0)) \Big) dt. 
\]
By the conditions $\frac{d}{dt}\sigma^1_{u_1}(t)>0$ and $\frac{d}{dt}\eta(u|S_{u_l}^1(t))<0$ in \eqref{Liu-0}, we have
\beq\label{good}
q(u_l,u_r)   -  \lambda_1(u_l)  \eta(u_l|u_r)=:-C_{l,r} <0.
\eeq
This and the smoothness of $\lambda_1$ yields
\begin{align*}
\begin{aligned}
I_{22}&= q(u_l|u_r) -  \lambda_1(u_l)  \eta(u_l|u_r)  +(\lambda_1(u_l)-\lambda_1(u_-)) \eta(u_l|u_r) +(\lambda_1(u_-)-\sigma_{\pm}) \eta(u_l|u_r)  \\
&= -C_{l,r} + C|u_- -u_l| + (\lambda_1(u_-)-\sigma_{\pm}) \eta(u_l|u_r)\\
&\le -C_{l,r} + C\sqrt{a} + (\lambda_1(u_-)-\sigma_{\pm}) \eta(u_l|u_r).
\end{aligned}
\end{align*}
Therefore, by smallness of $a$, we have
\beq\label{R}
\mathcal{R} \le -\frac{1}{2}C_{l,r} + (\lambda_1(u_-)-\sigma_{\pm}) \eta(u_l|u_r).
\eeq
Therefore we combine \eqref{L-est} and \eqref{R} to get
\begin{align}
\begin{aligned}\label{rh-1}
D_{RH}(u_{l,r},u_{\pm})&\le -\frac{a}{2}C_{l,r} + a(\lambda_1(u_-)-\sigma_{\pm})\eta(u_l|u_r)\\
&\quad +(\sigma_{\pm} -  \lambda_1(u_-))  \eta(u_-|u_l) + Ca^{3/2}. 
\end{aligned}
\end{align}
We use \eqref{a-small} to estimate
\begin{align}
\begin{aligned}\label{a-uu}
\eta(u_-|u_l) &< a \eta(u_-|u_r) \\
& = a \eta(u_l|u_r) + a (\eta(u_-) -\eta(u_l) -d\eta(u_r)\cdot(u_--u_l))\\
& =a \eta(u_l|u_r) + aC|u_- - u_l|\\
& <a \eta(u_l|u_r) + Ca^{3/2}.
\end{aligned}
\end{align}
Since $\lambda_1(u_-)\le \sigma_{\pm}$ by \eqref{Lax-0} for $i\ge 2$, we combine  \eqref{rh-1} and \eqref{a-uu} to estimate
\begin{align*}
\begin{aligned}
D_{RH}(u_{l,r},u_{\pm})&< -\frac{a}{2}C_{l,r} + a(\lambda_1(u_-)-\sigma_{\pm})\eta(u_l|u_r)\\
&\quad +(\sigma_{\pm} -  \lambda_1(u_-))\Big( a\eta(u_l|u_r)+ Ca^{3/2}\Big) + Ca^{3/2}\\
&=-\frac{a}{2}C_{l,r} + Ca^{3/2}\\
&\le 0.
\end{aligned}
\end{align*}
$\bullet$ {\bf Step C} (Verifying $(\mathcal{H}2^*)$) : 
It remains to consider the case that for $1\le i\le n$, $(u_-,u_+,\sigma_{\pm})$ is the $i$-th the entropic discontinuity satisfying 
\beq\label{con-1}
\eta(u_-|u_l) > a \eta(u_-|u_r) ~\mbox{and}~ \eta(u_+|u_l) < a \eta(u_+|u_r).
\eeq
Then by Lemma \ref{lem-L2}, the distance $|u_- - u_l|$ is estimated by weight $a$ as follows.
\beq\label{a-s}
|u_+ - u_l| \le C \sqrt{\eta(u_+ | u_l)} \le C \sqrt{a \eta(u_+|u_r)} = C\sqrt{a}.
\eeq
We apply the identity \eqref{metric} to $\mathcal{R}$ in \eqref{D-RH} to get
\begin{align*}
\begin{aligned}
\mathcal{R} &= \Big( q(u_+|u_l) - \sigma_{\pm}  \eta(u_+|u_l) \Big)+ \Big( q(u_l|u_r) - \sigma_{\pm}  \eta(u_l|u_r)\Big)\\
&\quad -  (d\eta(u_l)-d\eta(u_r)) \cdot \Big(f(u_l)-f(u_+) -\sigma_{\pm} (u_l -u_+) \Big)\\
&=: I_1 +I_2+I_3.
\end{aligned}
\end{align*}
By \eqref{a-s}, we have
\begin{align*}
\begin{aligned}
&I_{1} \le C|u_+ -u_l|^2\le C a,\\
&I_{3} \le C|u_+ -u_l|\le C\sqrt{a}.
\end{aligned}
\end{align*}
We use \eqref{good}, \eqref{a-s} the smoothness of $\lambda_1$ to get
\begin{align*}
\begin{aligned}
I_{2}&= q(u_l|u_r) -  \lambda_1(u_l)  \eta(u_l|u_r)  +(\lambda_1(u_l)-\lambda_1(u_+)) \eta(u_l|u_r) +(\lambda_1(u_+)-\sigma_{\pm}) \eta(u_l|u_r)  \\
&= -C_{l,r} +C|u_+ -u_l| + (\lambda_1(u_+)-\sigma_{\pm}) \eta(u_l|u_r)\\
&\le -C_{l,r} + C\sqrt{a} + (\lambda_1(u_+)-\sigma_{\pm}) \eta(u_l|u_r).
\end{aligned}
\end{align*}
Thus by smallness of $a$, we have
\beq\label{R-1}
\mathcal{R}\le -\frac{C_{l,r}}{2} + (\lambda_1(u_+)-\sigma_{\pm}) \eta(u_l|u_r). 
\eeq
On the other hand, applying the identity \eqref{metric} to $\mathcal{L}$ in \eqref{D-RH}, we have
\begin{align*}
\begin{aligned}
\mathcal{L} &= \Big( -q(u_-|u_+) + \sigma_{\pm}  \eta(u_-|u_+) \Big) +\Big( -q(u_+|u_l) + \sigma_{\pm}  \eta(u_+|u_l)\Big)\\
&\quad +  (d\eta(u_+)-d\eta(u_-)) \cdot \Big(f(u_+)-f(u_-) -\sigma_{\pm} (u_+ -u_-) \Big)\\
&=: J_1 + J_2 + J_3.
\end{aligned}
\end{align*}
By \eqref{f-q} for entropic discontinuity $(u_-,u_+,\sigma_{\pm})$, we have 
\[J_1\le 0,\quad J_3=0,
\]
thus
\[
\mathcal{L} \le - q(u_+|u_l) + \sigma_{\pm}  \eta(u_+|u_l).
\]
We now combine this and \eqref{R-1} to have
\begin{align*}
\begin{aligned}
D_{RH}(u_{l,r},u_{\pm})\le -\frac{a}{2}C_{l,r} + a(\lambda_1(u_+)-\sigma_{\pm})\eta(u_l|u_r)- q(u_+|u_l) + \sigma_{\pm}  \eta(u_+|u_l).
\end{aligned}
\end{align*}
Since \eqref{con-1} and \eqref{a-s} yield
\begin{align}
\begin{aligned}\label{a-11}
\eta(u_+|u_l) &< a \eta(u_+|u_r) \\
& = a \eta(u_l|u_r) + a (\eta(u_+) -\eta(u_l) -d\eta(u_r)\cdot(u_+ -u_l))\\
& =a \eta(u_l|u_r) + aC|u_+ - u_l|\\
& \le a \eta(u_l|u_r) + Ca^{3/2},
\end{aligned}
\end{align}
we use the fact that $\lambda_1(u_+)\le \sigma_{\pm}$ by \eqref{Lax-0}, to have
\begin{align*}
\begin{aligned}
D_{RH}(u_{l,r},u_{\pm})&< -\frac{a}{2}C_{l,r} + (\lambda_1(u_+)-\sigma_{\pm})\eta(u_+|u_l)+ Ca^{3/2} 
- q(u_+|u_l)+ \sigma_{\pm}  \eta(u_+|u_l)\\
&=-\frac{a}{2}C_{l,r} - q(u_+|u_l) + \lambda_1(u_+)\eta(u_+|u_l)+ Ca^{3/2} 
\end{aligned}
\end{align*}
Moreover, using the same computation as \eqref{same-1}, we get
\begin{align}
\begin{aligned}\label{same-1}
&-q(u_+|u_l) +  \lambda_1(u_+)  \eta(u_+|u_l) \\
&\quad =  (u_+ -u_l)^{T} \nabla^2 \eta(u_l) (\lambda_1(u_+) I -\nabla f(u_l)) (u_+ -u_l) + C|u_+-u_l|^3\\
&\quad \le (\lambda_1(u_+) -\lambda_1(u_l))  (u_+ -u_l)^{T} \nabla^2 \eta(u_l) (u_+ -u_l) + C|u_+ -u_l|^3\\
&\quad \le C|u_+ -u_l|^3,
\end{aligned}
\end{align}
which yields
\begin{align*}
\begin{aligned}
D_{RH}(u_{l,r},u_{\pm})&\le -\frac{a}{2}C_{l,r} +Ca^{3/2} \\
& \le 0.
\end{aligned}
\end{align*}
\end{proof}

\section{Criteria preventing  $a$-contractions for intermediate entropic shocks}\label{no-cont-shock}
In this section, as an application of Theorem \ref{thm-cont-2}, we construct some sufficient conditions, which provide no $a$-contraction for the intermediate entropic shocks. For that, we consider the following hypotheses $(\mathcal{A}_1)$.
\bigskip

\begin{itemize}
\item $(\mathcal{A}_1)$ : We suppose that for some $1<i<n$, the $i$-th intermediate characteristic field $(\lambda_i, r_i)$ is genuinely nonlinear on $\mathcal{U}$. For a given $u \in \mathcal{V}$, suppose that there is a $C^1$ $i$-th Hugoniot curve $S_{u}^i(s) \in \mathcal{U}$ defined on an interval $[0,s_u)$ (possibly $s_u=\infty$), such that 
$S_{u}^i(0)=u$ and the Rankine-Hugoniot condition:
\[f(S_{u}^i(s)) -f(u) = \sigma_{u}(s) (S_{u}^i(s) - u),\]
where $\sigma_{u}(s)$ is a $C^1$ velocity function satisfying the Liu entropy condition:
\beq\label{Liu-1}
\sigma_{u}^{\prime}(s) < 0,
\eeq
and $\sigma_{u}(0)=\lambda_i(u)$.\\
Moreover, we suppose that the system \eqref{main} satisfies the extended Lax condition:
\beq\label{Lax}
\lambda_{l-1} (u_-) \le \sigma \le \lambda_{l+1} (u_+),\quad \mbox{for any $l$-th shock $(u_-,u_+,\sigma)$}.
\eeq
\end{itemize}

\subsection{Conditions via neighboring genuinely nonlinear fields}
The following theorem is on the case where there is a genuinely nonlinear field except for the intermediate entropic shock. For this case, we construct some sufficient conditions, which provide no $a$-contraction for intermediate entropic shock. 
\begin{theorem}\label{thm-shock}
Consider the system \eqref{main} satisfying $(\mathcal{A}_1)$. Let $(u_l,u_r, \sigma_{l,r})$ be a given $i$-th the entropic shock such that $u_r=S_{u_l}^i(s_0)$, $\sigma_{u_l}(s_0)=\sigma_{l,r}$ for some $s_0>0$ and the conditions \eqref{Liu-1} and \eqref{Lax} are satisfied. Then the following statements holds.\\
\begin{itemize}
\item (1) For $0<a<1$, we assume that there is a $C^1$ $j$-th rarefaction curve $R_{u_l}^j (s)$ with $j<i$ such that $R_{u_l}^j(0)=u_l$ and the backward curve $R_{u_l}^{j,-} (s)$ of $R_{u_l}^j (s)$, i.e., $\lambda_j (R_{u_l}^{j,-} (s)) < \lambda_j (u_l) $, intersects with the $(n-1)$-dimensional surface $\Sigma_a$.
Then, the entropic shock $(u_l,u_r, \sigma_{l,r})$ does not satisfy $a$-contraction.\\

\item (2) For $a>1$, we assume that there is a $C^1$ $k$-th rarefaction curve $R_{u_r}^k (s)$ with $k>i$ such that $R_{u_r}^k(0)=u_r$ and the forward curve $R_{u_r}^{k,+} (s)$ of $R_{u_r}^k (s)$, i.e., $\lambda_k (R_{u_r}^{k,+} (s)) > \lambda_k (u_r) $, intersects with the $(n-1)$-dimensional surface $\Sigma_a$.
Then, the entropic shock $(u_l,u_r, \sigma_{l,r})$ does not satisfy $a$-contraction.\\

\item  (3) For $a=1$, we assume that one of the assumptions of (1) and (2) is satisfied. Then, the entropic shock $(u_l,u_r, \sigma_{l,r})$ does not satisfy $a$-contraction.\\
\end{itemize}
\end{theorem}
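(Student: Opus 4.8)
The plan is to argue by contraposition through Theorem \ref{thm-cont-2}: since $a$-contraction forces the discontinuity to be \emph{a-RES}, it suffices to exhibit a violation of one of $(\mathcal{H}1)$, $(\mathcal{H}2)$. I would violate $(\mathcal{H}1)$ directly, by producing a point $\bar u\in\Sigma_a$ with $D_{sm}(u_{l,r};\bar u)>0$; note that $(\mathcal{H}1)$ tests \emph{every} point of $\Sigma_a$, so it is legitimate to use a point lying on a rarefaction curve rather than a shock curve. For case (1), set $R(s):=R_{u_l}^{j,-}(s)$ with $R(0)=u_l$ and $\phi(s):=a\eta(R(s)|u_r)-\eta(R(s)|u_l)$, so that $R(s)\in\Sigma_a\iff\phi(s)=0$; let $\bar s>0$ be the first zero of $\phi$ (positive since $\phi(0)=a\eta(u_l|u_r)>0$) and $\bar u:=R(\bar s)$. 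Writing $g(s):=D_{sm}(u_{l,r};R(s))=a\,q(R(s)|u_r)-q(R(s)|u_l)$, the goal is $g(\bar s)>0$.

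The computational engine is the identity, valid along any integral curve $R(s)$ of the $\ell$-th eigenvector field $r_\ell$ and for any fixed $v$,
\[
\frac{d}{ds}q(R(s)|v)=\lambda_\ell(R(s))\,\frac{d}{ds}\eta(R(s)|v),
\]
which follows from $\nabla_u q(u|v)=(\nabla\eta(u)-\nabla\eta(v))\nabla f(u)$ and $\nabla_u\eta(u|v)=\nabla\eta(u)-\nabla\eta(v)$, together with $\nabla f(R)\,R'=\lambda_\ell(R)\,R'$. It gives $g'(s)=\lambda_j(R(s))\,\phi'(s)$, and an integration by parts (using $\phi(\bar s)=0$, $\phi(0)=a\eta(u_l|u_r)$ and $q(u_l|u_l)=\eta(u_l|u_l)=0$) yields
\[
g(\bar s)=a\big[q(u_l|u_r)-\lambda_j(u_l)\,\eta(u_l|u_r)\big]-\int_0^{\bar s}\Big(\tfrac{d}{ds}\lambda_j(R(s))\Big)\phi(s)\,ds.
\]

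It remains to sign the two terms, and this is where the hypotheses and the main difficulty lie. Applying \eqref{vasseur} to the $i$-shock with $u=u_l$, $v=u_r$, $s=s_0$ gives $q(u_l|u_r)-\sigma_{l,r}\eta(u_l|u_r)=-\int_0^{s_0}\tfrac{d}{dt}\sigma_{u_l}^i(t)\,\eta(u_l|S_{u_l}^i(t))\,dt\ge0$ by the Liu condition \eqref{Liu-1}; combined with the extended Lax inequality \eqref{Lax}, which for $j<i$ forces $\lambda_j(u_l)\le\lambda_{i-1}(u_l)\le\sigma_{l,r}$, the bracket is nonnegative. For the integral, $\phi>0$ on $(0,\bar s)$ while $\lambda_j$ strictly decreases along the backward branch, so the integral is strictly negative and its negative strictly positive; hence $g(\bar s)>0$ and $(\mathcal{H}1)$ fails. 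I expect the sign bookkeeping to be the crux: the boundary term is nonnegative only because the Liu condition on the $i$-shock (through \eqref{vasseur}) and the correct Lax ordering of $\sigma_{l,r}$ against the neighboring characteristic speed cooperate, and the \emph{strict} sign rests on the genuine nonlinearity of the auxiliary field implicit in the rarefaction-curve hypothesis.

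Case (2) is the mirror image: base the curve at $u_r$, take $R(s)=R_{u_r}^{k,+}(s)$ with $\phi(0)=-\eta(u_r|u_l)<0$, and run the same steps with $\lambda_k$ replacing $\lambda_j$, obtaining $g(\bar s)=-\big[q(u_r|u_l)-\lambda_k(u_r)\eta(u_r|u_l)\big]-\int_0^{\bar s}\big(\tfrac{d}{ds}\lambda_k(R(s))\big)\phi(s)\,ds$. Here \eqref{vasseur} with $u=v=u_l$, $s=s_0$ gives $q(u_r|u_l)-\sigma_{l,r}\eta(u_r|u_l)\le0$, while \eqref{Lax} for $k>i$ yields $\lambda_k(u_r)\ge\lambda_{i+1}(u_r)\ge\sigma_{l,r}$, so the bracket is nonpositive; the forward branch has $\lambda_k$ strictly increasing and $\phi<0$ on $(0,\bar s)$, so the integral again contributes strictly positively and $g(\bar s)>0$. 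For case (3), $a=1$, neither sign argument used $a\lessgtr1$ except through the standing crossing hypothesis, so whichever of the two assumptions holds the corresponding computation applies verbatim. In every case $(\mathcal{H}1)$ is violated at $\bar u\in\Sigma_a$, so the discontinuity is not \emph{a-RES}, and Theorem \ref{thm-cont-2} rules out $a$-contraction.
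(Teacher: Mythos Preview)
Your proof is correct and follows essentially the same approach as the paper. The paper packages the computation by defining the auxiliary function $F(s)=a\big[q(R(s)|u_r)-\lambda_j(R(s))\eta(R(s)|u_r)\big]-\big[q(R(s)|u_l)-\lambda_j(R(s))\eta(R(s)|u_l)\big]$ (which equals your $g(s)-\lambda_j(R(s))\phi(s)$, hence $F(\bar s)=g(\bar s)$) and then shows $F(0)>0$ and $F'(s)>0$ directly; your integration-by-parts on $g$ produces the identical decomposition $g(\bar s)=F(0)+\int_0^{\bar s}F'(s)\,ds$, so the two arguments are algebraically the same and both conclude by violating $(\mathcal{H}1)$ and invoking Theorem~\ref{thm-cont-2}.
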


\begin{remark}
The reason of splitting $a>0$ into the three ranges as above is motivated from the geometric observation. More precisely, the $(n-1)$-dimensional surface $\Sigma_1$ for $a=1$ becomes hyperplane as
\[
(d\eta(u_r) - d\eta(u_l))\cdot u = d\eta(u_r)\cdot u_ r -\eta(u_r) -d\eta(u_l)\cdot u_ l +\eta(u_l),
\]
which separates $u_l$ and $u_r$ in the phase space, whereas the surface $\Sigma_a$ for $0<a<1$ (resp. $a>1$) is strictly convex that belongs to the region including $u_l$ (resp. $u_r$). Thus, for the case of $0<a<1$ (resp. $a>1$), the rarefaction curve $R_{u_l}^{j,-} (s)$ (resp. $R_{u_r}^{k,+} (s)$) issued from $u_l$ (resp. $u_r$) is likely to intersect with $\Sigma_a$. On the other hand, since $\Sigma_a$ shrinks to $u_l$ (resp. $u_r$) as $a\rightarrow 0$ (resp. $a\rightarrow\infty$), the rarefaction curve $R_{u_l}^{j,-} (s)$ (resp. $R_{u_r}^{k,+} (s)$) hardly intersects with $\Sigma_a$ for $a\gg 1$ (resp. $a\ll 1$). In fact, the proof of Theorem \ref{thm-contact-2} does not depend on the strength of weight $a$. 
\end{remark}

{\bf Proof of Theorem \ref{thm-contact-2}} 
{\bf(1) Case of $0<a<1$ :} Let $\bar{u}$ be the first intersection point of the $j$-th the backward rarefaction curve 
$R_{u_l}^{j,-} (s)$ and the surface $\Sigma_a$. Let us put $\bar{u}=R_{u_l}^{j,-} (\bar{s})$ for some $\bar{s}>0$.
We show that the shock $(u_l,u_r, \sigma_{l,r})$ does not satisfies the first condition $(\mathcal{H} 1)$ of {\it{a-RES}} as
\begin{align*}
\begin{aligned}
D_{sm}(u_{l,r};\bar{u}) = a q(\bar{u},u_r) -q(\bar{u},u_l) >0.
\end{aligned} 
\end{align*}
For this end, we consider a differentiable function $f : \bbr_+ \rightarrow \bbr$ defined by
\begin{align}
\begin{aligned}\label{function}
F(s) &= a \Big(q(R_{u_l}^{j,-} (s), u_r) -\lambda_j(R_{u_l}^{j,-} (s)) \eta(R_{u_l}^{j,-} (s) | u_r) \Big)\\
&\qquad -q(R_{u_l}^{j,-} (s),u_l) 
+ \lambda_j(R_{u_l}^{j,-} (s)) \eta(R_{u_l}^{j,-} (s) | u_l).
\end{aligned} 
\end{align}
Note that we have
\[
F(\bar{s}) = D_{sm}(u_{l,r};\bar{u}),
\]
because $a\eta(\bar{u} | u_r)=\eta(\bar{u} | u_l)$.\\
Since the $i$-th shock $(u_l, S_{u_l}^i(s),\sigma_{u_l}^i(s))$ satisfies \eqref{Liu-1} and \eqref{Lax}, by the inequality
\eqref{vasseur} and the fact $\lambda_j(u_l)<\lambda_i(u_l)$, we have
\begin{align}
\begin{aligned}\label{since-1}
& q(u_l, u_r) -\sigma_{l,r}\eta(u_l | u_r) =- \int_0^{s_0} \frac{d}{dt}\sigma^{i}_{u_l}(t)\eta(u_l|S_{u_l}(t))dt > 0,\\
& \sigma_{l,r} \ge \lambda_{i-1} (u_l) \ge \lambda_j (u_l),
\end{aligned} 
\end{align}
which implies the positivity of $f(0)$ by
\begin{align}
\begin{aligned}\label{f-zero}
F(0) & = a \Big(q(u_l, u_r) -\lambda_j(u_l) \eta(u_l | u_r) \Big)\\
& = a \Big(q(u_l, u_r) -\sigma_{l,r} \eta(u_l | u_r) \Big)  +a(\sigma_{l,r}- \lambda_j(u_l)) \eta(u_l | u_r) \\
&> 0.
\end{aligned} 
\end{align}
Let us show 
\beq\label{f-prime}
F^{\prime} (s) >0,\quad 0< s < \bar{s}.
\eeq
First of all, since $\frac{d}{ds}R_{u_l}^{j,-} (s) = r_j (R_{u_l}^{j,-} (s))$, we have
\[
\nabla f (R_{u_l}^{j,-} (s)) \frac{d}{ds}R_{u_l}^{j,-} (s) = \lambda_j(R_{u_l}^{j,-} (s))\frac{d}{ds}R_{u_l}^{j,-} (s),
\]
which yields
\begin{align*}
\begin{aligned}
F^{\prime}(s) & = a \Big[ (d\eta (R_{u_l}^{j,-} (s))-d\eta(u_r))\Big(\nabla f (R_{u_l}^{j,-} (s)) -\lambda_j(R_{u_l}^{j,-} (s)) I \Big) \frac{d}{ds}R_{u_l}^{j,-} (s) \Big]\\
&\quad -(d\eta (R_{u_l}^{j,-} (s))-d\eta(u_l))\Big(\nabla f (R_{u_l}^{j,-} (s)) -\lambda_j(R_{u_l}^{j,-} (s)) I \Big) \frac{d}{ds}R_{u_l}^{j,-} (s)\\
&\quad +\frac{d}{ds}\lambda_j(R_{u_l}^{j,-} (s)) \Big( \eta (R_{u_l}^{j,-} (s) | u_l) - a\eta (R_{u_l}^{j,-} (s) | u_r) \Big)\\
&=\frac{d}{ds}\lambda_j(R_{u_l}^{j,-} (s)) \Big( \eta (R_{u_l}^{j,-} (s) | u_l) - a\eta (R_{u_l}^{j,-} (s) | u_r) \Big).
\end{aligned} 
\end{align*}
Here, since $\frac{d}{ds}\lambda_j(R_{u_l}^{j,-} (s))$ is continuous and $\lambda_j(R_{u_l}^{j,-} (s)) < \lambda_j(u_l)$ besides
\[
\frac{d}{ds}\lambda_j(R_{u_l}^{j,-} (s)) = \nabla \lambda_j(R_{u_l}^{j,-} (s)) \cdot r_j (R_{u_l}^{j,-} (s))\neq 0,
\]
we have
\[
\frac{d}{ds}\lambda_j(R_{u_l}^{j,-} (s)) <0,\quad s>0.
\]
Since $R_{u_l}^{j,-} (\bar{s})=\bar{u}$ is the first intersection point of the continuous curve $R_{u_l}^{j,-} (s)$ and the surface $\Sigma_a$, we have
\[
\eta (R_{u_l}^{j,-} (s) | u_l)  - a\eta (R_{u_l}^{j,-} (s) | u_r) <0,\quad 0<s<\bar{s}.
\]
Thus we have shown \eqref{f-prime}, which implies together with \eqref{f-zero} that
\[
F(\bar{s}) = D_{sm}(u_{l,r};\bar{u})>0.
\]
Hence the shock $(u_l,u_r, \sigma_{l,r})$ is not {\it{a-RES}}, which provides the conclusion by Theorem \ref{thm-cont-2}.\\
{\bf(2) Case of $a>1$ :} We follow the same argument as above by considering $R_{u_r}^{k,+}$ instead of $R_{u_l}^{j,-}$. Indeed, we use the same computations to have
\begin{align*}
\begin{aligned}
f(0) & = -q(u_r, u_l) + \lambda_k(u_r) \eta(u_r | u_l) \\
& = - \Big(q(u_r, u_l) -\sigma_{l,r} \eta(u_r | u_l) \Big)  +(\lambda_k(u_r)-\sigma_{l,r}) \eta(u_r | u_l) \\
&>0,
\end{aligned} 
\end{align*}
and 
\begin{align*}
\begin{aligned}
f^{\prime}(s) & =\frac{d}{ds}\lambda_j(R_{u_r}^{k,+} (s)) \Big( \eta (R_{u_r}^{j,+} (s) | u_l) - a\eta (R_{u_l}^{j,+} (s) | u_r) \Big)\\
&>0,\quad  0\le s\le \tilde{s},
\end{aligned} 
\end{align*}
where $\tilde{u}:=R_{u_r}^{k,+} (\tilde{s})$ is the first intersection point of the forward curve $R_{u_r}^{k,+} (s)$ and the surface $\Sigma_a$. Thus we have the same conclusion as
\[
f(\tilde{s}) = D_{sm}(u_{l,r};\tilde{u}) > 0.
\]
{\bf(3) Case of $a=1$ :} Since the proof of two cases above does not depend on the strength of $a$, if one of the assumptions of (1) and (2) is satisfied, we end up with no $a$-contraction of the entropic shock $(u_l,u_r, \sigma_{l,r})$.

\subsection{Application to magnetohydrodynamics}
As an application of Theorem \ref{thm-shock}, we here show that there is no contraction property of certain intermediate shocks for two-dimensional (planar) isentropic MHD in Lagrangian coordinates: 
\begin{align}
\begin{aligned}\label{MHD}
\left\{ \begin{array}{ll}
       \partial_t v - \partial_x u =0\\
        \partial_t (vB) - \beta \partial_x w = 0\\
         \partial_t u + \partial_x (p+\frac{1}{2}B^2) = 0\\
          \partial_t w - \beta \partial_x B = 0, \end{array} \right.
\end{aligned}
\end{align}
where $v$ denotes specific volume, and two-dimensional fluid velocity $(u, w)$ and magnetic field $(\beta,B)$ only depend on a single direction $e_1$ measured by $x$. This behavior of two-dimensional vector fields with spatially one-dimensional dependence is achieved when the initial condition is so. Thus the divergence-free condition of magnetic field of full MHD reduces that $\beta$ is constant (See for example \cite{B-L-Zumbrun} for study on \eqref{MHD}).  As a perfect fluid, the pressure $p$ is assumed to satisfies
\beq\label{pressure-MHD}
p(v)=v^{-\gamma},\quad \gamma >1.
\eeq
This system has an entropy $\eta$ as
\[
\eta(U)=\int_{v}^{\infty} p(s) ds + \frac{1}{2}(u^2 + w^2) + \frac{q^2}{2v}
\]
in terms of the conservative variables $U:=(v,q,u,w)$ where $q:=vB$.\\
For simplicity of computation, we use non-conservative variable $W:=(v,B,u,w)$ and rewrite \eqref{MHD} as a quasilinear form:
\[
 \partial_t W + A\partial_x W =0,
\]
where the $4\times 4$ matrix $A$ is given by
\[
A:= \left( \begin{matrix} 0&0&-1&0\\ 0&0&\frac{B}{v}&-\frac{\beta}{v}\\-c^2& B& 0&0\\ 0&-\beta &0&0
 \end{matrix} \right).
\]
where $c:=\sqrt{-p^{\prime}(v)}$ denotes the sound speed.\\
Since the eigenvalues of $A$ solves the characteristic polynomial
\[
\lambda^4 -\Big(\frac{B^2 + \beta^2}{v} + c^2 \Big) \lambda^2 + \frac{\beta^2}{v}c^2 =0,
\]
we have four eigenvalues
\[
\lambda_1=-\sqrt{\alpha_+},\quad \lambda_2=-\sqrt{\alpha_-},\quad \lambda_3=\sqrt{\alpha_-},\quad \lambda_4=\sqrt{\alpha_+},    
\]
where 
\[
\alpha_{\pm}:=\frac{1}{2} \Big[ \frac{B^2+\beta^2}{v} +c^2 \pm\sqrt{ \Big( \frac{B^2+\beta^2}{v}+c^2 \Big)^2-4\beta^2\frac{c^2}{v}} \Big].
\]
By a straightforward computation, we have the corresponding eigenvectors
\begin{align*}
\begin{aligned}
&r_1=\Big(1,-\frac{(\alpha_{+}-c^2)}{B}, \sqrt{\alpha_{+}},  -\frac{\beta(\alpha_{+}-c^2)}{B\sqrt{\alpha_+}} \Big)^T,\\
&r_2=\Big(1,-\frac{(\alpha_{-}-c^2)}{B}, \sqrt{\alpha_{-}},  -\frac{\beta(\alpha_{-}-c^2)}{B\sqrt{\alpha_-}} \Big)^T,\\
&r_3=\Big(-1,\frac{(\alpha_{-}-c^2)}{B}, \sqrt{\alpha_{-}},  -\frac{\beta(\alpha_{-}-c^2)}{B\sqrt{\alpha_-}} \Big)^T,\\
&r_4=\Big(-1,\frac{(\alpha_{+}-c^2)}{B}, \sqrt{\alpha_{+}},  -\frac{\beta(\alpha_{+}-c^2)}{B\sqrt{\alpha_+}} \Big)^T.
\end{aligned}
\end{align*}
Here, we restrict our study to the case of $B\neq0$. Using the strict convexity of pressure $p(v)$, $d\lambda_i\cdot r_i > 0$ for all $1\le i\le 4$, which means that all characteristic fields are genuinely non-linear. Indeed since
\[
\alpha_{\pm} -c^2=\frac{1}{2} \Big[ \frac{B^2+\beta^2}{v} -c^2 \pm\sqrt{ \Big( \frac{B^2+\beta^2}{v}-c^2 \Big)^2+4\frac{B^2c^2}{v}} \Big]
\]
yields
\beq\label{alpha-c}
\alpha_{-} < c^2 < \alpha_+,
\eeq
thus we have
\begin{align*}
\begin{aligned}
d\lambda_1\cdot r_1 =\frac{1}{2\sqrt{\alpha_+}}\Big[-\partial_v\alpha_+ +\partial_B\alpha_+\frac{\alpha_{+}-c^2}{B} \Big],
\end{aligned}
\end{align*}
\begin{align*}
\begin{aligned}
-\partial_v\alpha_+ &=\frac{1}{2}\Big[\frac{B^2+\beta^2}{v^2}+p^{\prime\prime}+\frac{(\frac{B^2+\beta^2}{v}-c^2)(\frac{B^2+\beta^2}{v^2}-p^{\prime\prime})+\frac{2B^2c^2}{v^2}+\frac{2B^2p^{\prime\prime}}{v}}{\sqrt{ \Big( \frac{B^2+\beta^2}{v}-c^2 \Big)^2+4\frac{B^2c^2}{v}}} \Big]\\
&>\frac{1}{2}\frac{\frac{2B^2c^2}{v^2}+\frac{2B^2p^{\prime\prime}}{v}}{\sqrt{ \Big( \frac{B^2+\beta^2}{v}-c^2 \Big)^2+4\frac{B^2c^2}{v}}}>0,
\end{aligned}
\end{align*}
and 
\begin{align*}
\begin{aligned}
\partial_B\alpha_+\frac{\alpha_{+}-c^2}{B} &= \Big[\frac{1}{v}+\frac{(\frac{B^2+\beta^2}{v}-c^2)\frac{1}{v}+\frac{2c^2}{v}}{\sqrt{ \Big( \frac{B^2+\beta^2}{v}-c^2 \Big)^2+4\frac{B^2c^2}{v}}}  \Big](\alpha_{+}-c^2)\\
&>\frac{\frac{2c^2}{v}}{\sqrt{ \Big( \frac{B^2+\beta^2}{v}-c^2 \Big)^2+4\frac{B^2c^2}{v}}}(\alpha_{+}-c^2)>0.
\end{aligned}
\end{align*}
Similarly we have $d\lambda_i\cdot r_i > 0$ for $i=2,3,4$.\\
Let $(U_l,U_r,\sigma_2)$ be the 2-shock wave satisfying the Rankine-Hugoniot condition:
\begin{align}
\begin{aligned}\label{MHD-RH}
-[u]&=\sigma_2[v],\\
-\beta [w]&= \sigma_2 [q],\\
[p]+\Big[\frac{q^2}{2v^2}\Big]&=\sigma_2 [u],\\
-\beta \Big[\frac{q}{v}\Big]&= \sigma_2 [w],
\end{aligned}
\end{align}
where $[f]:=f_r-f_l$.\\
By Lax condition, $d\lambda_2\cdot r_2 >0$ implies that $-r_2(U_l)$ is a tangent vector at $U_l$ of the 2-shock curve $S_{U_l}^2$ issuing from $U_l$. Thus since $dv\cdot(-r_2)<0$ and $du\cdot(-r_2)<0$, we have
\beq\label{2-shock}
[v]<0\quad \mbox{and}\quad [u]<0. 
\eeq
Since
\begin{align*}
\begin{aligned}
dB\cdot (-r_2)=\frac{(\alpha_{-}-c^2)}{B}=\left\{ \begin{array}{ll}
       <0\quad\mbox{if} ~B>0\\
        >0\quad\mbox{if} ~B<0, \end{array} \right.
\end{aligned}
\end{align*}
we have that $[B]<0$ for $B_l>0$, and $[B]>0$ for $B_l<0$. In particular, we here consider the case where the 2-shock wave satisfies
\beq\label{2-B}
\mbox{either}\quad B_l>B_r>0\quad \mbox{or}\quad B_l<B_r<0. 
\eeq
Similarly for a given 3-shock wave $(\tilde{U}_l, \tilde{U}_r, \sigma_3)$, we have $[\tilde{v}]>0$, $[\tilde{u}]<0$, and
$[\tilde{B}]>0$ for $\tilde{B}_l>0$, and $[\tilde{B}]<0$ for $\tilde{B}_l<0$. Also, we consider the case where the 3-shock wave 
satisfies
\beq\label{3-B}
\mbox{either}\quad \tilde{B}_r>\tilde{B}_l>0\quad \mbox{or}\quad \tilde{B}_r<\tilde{B}_l<0. 
\eeq

We are now ready to show that for any $a>0$, there is no $a$-contraction of such intermediate shocks as follows.\\\\
\begin{theorem}\label{thm-MHD}
Let $(U_l, U_r, \sigma_2)$ be a given 2-shock wave of the system \eqref{MHD}-\eqref{pressure-MHD}
satisfying \eqref{2-B}. Then there is no weight $a>0$ such that $(u_l,u_r)$ satisfies $a$-contraction. Likewise, this result holds for a given 3-shock wave $(\tilde{U}_l, \tilde{U}_r, \sigma_3)$ satisfying \eqref{3-B}.
\end{theorem}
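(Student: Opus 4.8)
The plan is to obtain the whole statement as an application of Theorem~\ref{thm-shock}, verifying its hypotheses for the $2$-shock (intermediate index $i=2$) and the $3$-shock ($i=3$), and doing so for \emph{every} weight $a>0$. The preceding computation already shows that all four fields are genuinely nonlinear. Under \eqref{2-B} (resp. \eqref{3-B}) the magnetic component keeps a constant sign along the shock, so $B\neq 0$; the discriminant defining $\alpha_\pm$ is then strictly positive, whence by \eqref{alpha-c} one has $\alpha_-<c^2<\alpha_+$ and
\[
\lambda_1=-\sqrt{\alpha_+}<\lambda_2=-\sqrt{\alpha_-}<\lambda_3=\sqrt{\alpha_-}<\lambda_4=\sqrt{\alpha_+},
\]
so the system is strictly hyperbolic there. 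Consequently $(\mathcal{A}_1)$ holds: the $i$-th field is genuinely nonlinear, the Liu condition \eqref{Liu-1} holds for the Lax shock, and strict hyperbolicity yields the extended Lax condition \eqref{Lax}. Identifying the endpoints with the $u_l,u_r$ of Theorem~\ref{thm-shock}, it then suffices, for each $a>0$, to exhibit a rarefaction branch meeting $\Sigma_a=\{\,\eta(\cdot|U_l)=a\,\eta(\cdot|U_r)\,\}$.

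For $0<a\le 1$ I would use the $1$-rarefaction field issued from $U_l$ (admissible since $1<i$ for both $i=2$ and $i=3$) and its backward branch $R_{U_l}^{1,-}$; for $a\ge 1$ I would use the $4$-rarefaction issued from $U_r$ (admissible since $4>i$) and its forward branch $R_{U_r}^{4,+}$. With the normalisation $r_1=(1,\dots)^T$ and $r_4=(-1,\dots)^T$ given in the text, both branches satisfy $dv/ds=-1$, so $v(s)=v_l-s$ (resp. $v_r-s$) decreases to the vacuum value $0$ at a finite parameter. Along the way $B$ keeps its sign, since $dB/ds=(\alpha_+-c^2)/B$ with $\alpha_+-c^2>0$, so the branches stay in the strictly hyperbolic region where they are well defined.

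The crux is the intersection. As $v\downarrow 0$ the entropy diverges: the pressure contribution $\int_v^\infty s^{-\gamma}\,ds=\frac{v^{1-\gamma}}{\gamma-1}\to+\infty$ for $\gamma>1$, and the velocity component also blows up, so $\eta\to+\infty$ along each branch. Setting
\[
g(s):=\eta\big(R_{U_l}^{1,-}(s)\,\big|\,U_l\big)-a\,\eta\big(R_{U_l}^{1,-}(s)\,\big|\,U_r\big),
\]
one has $g(0)=-a\,\eta(U_l|U_r)<0$. For $0<a<1$ the two relative entropies share the common leading rate $\eta(\cdot)$, so $g(s)=(1-a)\,\eta(\cdot)+(\text{affine, lower order})\to+\infty$; by continuity $g$ vanishes at a first $\bar s$, i.e. $R_{U_l}^{1,-}(\bar s)\in\Sigma_a$, which is exactly the hypothesis of Theorem~\ref{thm-shock}(1). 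The dual estimate handles $a>1$: the forward branch $R_{U_r}^{4,+}$ gives a function starting at $\eta(U_r|U_l)>0$ and tending to $-\infty$ (since now $1-a<0$), so it crosses $\Sigma_a$, matching Theorem~\ref{thm-shock}(2).

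The genuinely delicate point, and the main obstacle, is the borderline $a=1$, where the quadratic entropy terms cancel exactly and $g$ reduces to an \emph{affine} function of the conservative state $(v,q,u,w)$. Here I would track the leading divergence as $v=\varepsilon\downarrow 0$ component by component: the $v$- and $q$-contributions stay bounded (indeed $q=vB=O(1)$ since $B\sim\varepsilon^{-1}$), the $w$-velocity contribution converges because its integrand is integrable for $\gamma>1$, while the $u$-velocity diverges (driven by $du/ds=-\sqrt{\alpha_+}<0$) and dominates; as its coefficient in $g$ equals $u_r-u_l=[u]<0$ by \eqref{2-shock}, the product tends to $+\infty$, forcing $g\to+\infty$ and hence a crossing of the hyperplane $\Sigma_1$, so Theorem~\ref{thm-shock}(3) applies. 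The $3$-shock is treated identically, with $(U_l,U_r)$ replaced by $(\tilde U_l,\tilde U_r)$, using $[\tilde u]<0$ and \eqref{3-B}; the admissible ranges $j=1<3$ and $k=4>3$ persist, so Theorem~\ref{thm-shock} again rules out $a$-contraction for every $a>0$.
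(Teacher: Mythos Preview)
Your overall strategy---applying Theorem~\ref{thm-shock} by showing that an extremal rarefaction branch crosses $\Sigma_a$---is exactly the paper's. For $0<a<1$ (backward $1$-rarefaction from $U_l$) and for $a>1$ (forward $4$-rarefaction from $U_r$) your argument via $(1-a)\eta(\cdot)\to\pm\infty$ is correct and matches the paper's in spirit; the paper phrases the $a<1$ case by observing that the region $\{\eta(\cdot|U_l)\le a\,\eta(\cdot|U_r)\}$ is bounded in the conservative variables while $v\to 0$ along the branch, and for $a>1$ it reduces to $a=1$ by the monotonicity $F_a\le F_1$.

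The genuine gap is your treatment of $a=1$. You assert that along $R_{U_l}^{1,-}$ one has $B\sim\varepsilon^{-1}$ (hence $q=vB=O(1)$) and that $w$ stays bounded, but neither claim is justified: these require analysing the coupled ODE for $(v,B)$ and the resulting asymptotics of $\alpha_+$, which depend nontrivially on the regime $\gamma\lessgtr 2$. Without this, you cannot conclude that the $u$-term dominates the affine expression for $g$. The paper sidesteps this entirely: it treats $a\ge 1$ together by working on the \emph{forward $4$-rarefaction from $U_r$} and computing, via the Rankine--Hugoniot relations \eqref{MHD-RH} for the fixed $2$-shock,
\[
\frac{dF_1(R_{U_r}^{4,+}(v_+))}{dv_+}=(\sigma_2+\sqrt{\alpha_+})\,[u]\;+\;\frac{v_+(\alpha_+-c^2)}{q_+}\Bigl(1-\frac{\beta^2}{\sigma_2\sqrt{\alpha_+}}\Bigr)\Bigl[\frac{q}{v}\Bigr].
\]
The first term tends to $-\infty$ because $[u]<0$ and $\sqrt{\alpha_+}\to\infty$; the second is negative for small $v_+$ precisely under the sign condition \eqref{2-B}. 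Hence $F_1$ becomes negative somewhere on the branch, and then $F_a\le F_1<0$ for every $a\ge 1$. This use of the Rankine--Hugoniot conditions to collapse the affine case is the key idea you are missing; it also explains why the hypothesis \eqref{2-B} (resp.\ \eqref{3-B}) enters the statement.
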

\begin{proof}
First of all, we show that for any $0<a<1$, the backward 1-rarefaction wave $R_{U_l}^{1,-}$ issuing from $U_l$ intersects with the three dimensional surface $\Sigma_a$, i.e.,
\[
\Sigma_a:=\{U~|~\eta(U|U_l)=a\eta(U|U_r) \}.
\]
Since $dv\cdot r_1=1>0$, $v$ is strictly monotone along the integral curve of $r_1$, which means that the 1-rarefaction wave can be parameterized by $v$. Moreover since $d\lambda_1\cdot r_1>0$, $-r_1$ is the tangent vector of the backward 1-rarefaction wave $R_{U_l}^{1,-}$, which implies that $v$ decreases along $R_{U_l}^{1,-}$. That is, $v_+\le v_l$ for all parameters $v_+$ of $R_{U_l}^{1,-}$. Notice that $R_{U_l}^{1,-}$ is well-defined for all $v_+\in(0,v_l]$, because $-r_1(W)$ is smooth for all $W\in (0,\infty)\times (\bbr-\{0\})\times \bbr^2$. Indeed, since 
\beq\label{alpha-c}
\alpha_+ -c^2=\frac{1}{2} \Big[ \frac{B^2+\beta^2}{v} -c^2 +\sqrt{ \Big( \frac{B^2+\beta^2}{v}-c^2 \Big)^2+4\frac{B^2c^2}{v}} \Big]>0,
\eeq
we have
\begin{align}
\begin{aligned}\label{B-sign}
dB\cdot (-r_1)=\frac{(\alpha_{+}-c^2)}{B}=\left\{ \begin{array}{ll}
       >0\quad\mbox{if} ~B>0\\
        <0\quad\mbox{if} ~B<0, \end{array} \right.
\end{aligned}
\end{align}
which implies that $B_+\neq 0$ along $R_{U_l}^{1,-}$ due to $B_l\neq0$, thus $-r_1(W)$ is smooth for all $W\in (0,\infty)\times (\bbr-\{0\})\times \bbr^2$.\\
We now use the fact that
\begin{align*}
\begin{aligned}
&\mbox{for}~a<1,\quad\eta(U|U_l) \le a\eta(U|U_r) \quad\mbox{is equivalent to}\\
&\eta(U) \le \frac{1}{1-a} (\eta(U_l)-a\eta(U_r) -\nabla \eta(U_l)\cdot U_l + a\nabla \eta(U_r)\cdot U_r +(\nabla \eta(U_l)-a\nabla \eta(U_r))\cdot U),
\end{aligned} 
\end{align*}
which is rewritten as
\beq\label{quad-mhd}
\int_{v}^{\infty} p(s) ds + \frac{1}{2}(u^2 + w^2) + \frac{q^2}{2v}\le c_1+c_2 (v+q+u+w),  
\eeq
for some constants $c_1, c_2$. This implies that 
\[
\eta(U|U_l) \le a\eta(U|U_r) \Longleftrightarrow v>c_*~\mbox{and}~ |q|+ |u|+ |w|\le c^*\quad\mbox{for some constants}~c_*,~c^*>0, 
\]
since $\int_{0}^{\infty}p(s) ds =+\infty$, and the positive terms on $u$, $w$ and $q$ are quadratic in the left-hand side of \eqref{quad-mhd}. Therefore there exists $0<v_* \ll c_*$ such that
\[
\eta(R_{U_l}^{1,-}(v_*)|U_l) > a\eta(R_{U_l}^{1,-}(v_*)|U_r),
\]
which implies that $R_{U_l}^{1,-}$ intersects with $\Sigma_a$ for $a<1$, because $R_{U_l}^{1,-}$ is a continuous curve issuing from $U_l\in \{U~|~\eta(U|U_l) < a\eta(U|U_r)\}$.\\

On the other hand, we show that the forward 4-rarefaction wave $R_{U_r}^{4,+}$ issuing from $U_r$ intersects with the surface $\Sigma_a$ for any $a\ge 1$. Since $d\lambda_4\cdot r_4>0$ and $dv\cdot r_4<0$,
\beq\label{-r}
r_4~\mbox{ is the tangent vector of the forward 4-rarefaction wave}~ R_{U_r}^{4,+},
\eeq
and the parameter $v_+$ decreases along $R_{U_r}^{4,+}$. Moreover $R_{U_r}^{4,+}$ is well-defined for all $v_+\in(0,v_r]$ by the same reason as above.\\
Let us consider a continuous function 
\[
F_a(U):=\eta(U|U_l)-a\eta(U|U_r).
\]
We claim that 
\beq\label{positive-F}
F_1(R_{U_r}^{4,+}(v_*)) < 0 \quad\mbox{for some}~v_*\in(0,v_r].
\eeq
Using \eqref{-r}, we have
\begin{align*}
\begin{aligned}
\frac{dF_1(R_{U_r}^{4,+}(v_+))}{dv_+}&=(\nabla \eta(U_r)-\nabla \eta(U_l))\cdot \frac{d R_{U_r}^{4,+}(v_+)}{dv_+}\\
&=[p]+\Big[\frac{q^2}{2v^2}\Big] +\Big[\frac{q}{v}\Big] \frac{v(\alpha_+ -c^2)}{q_+} +[u]\sqrt{\alpha_+} -[w]\frac{v_+\beta(\alpha_{+}-c^2)}{q_+\sqrt{\alpha_+}}.
\end{aligned}
\end{align*}
And \eqref{MHD-RH} yields
\[
\frac{dF_1(R_{U_r}^{4,+}(v_+))}{dv_+}=\underbrace{(\sigma_2 +\sqrt{\alpha_+})[u]}_{I_1} + \underbrace{\frac{v_+(\alpha_+ -c^2)}{q_+}\Big(1- \frac{\beta^2}{\sigma_2\sqrt{\alpha_+} } \Big)\Big[\frac{q}{v}\Big]}_{I_2}. 
\] 
Since \eqref{alpha-c} yields
\beq\label{alpha-to}
\sqrt{\alpha_+} > \sqrt{-p^{\prime}(v_+)} =\sqrt{\gamma v_+^{-\gamma-1}} \to \infty\quad\mbox{as}~v_+\to0+,
\eeq
it follows from \eqref{2-shock} that $I_1 \to -\infty$ as $v_+\to0+$. \\
To control $I_2$, we use the condition \eqref{2-B}. Since
\begin{align*}
\begin{aligned}
dB\cdot r_4=\frac{(\alpha_{+}-c^2)}{B}=\left\{ \begin{array}{ll}
       >0\quad\mbox{if} ~B>0\\
        <0\quad\mbox{if} ~B<0, \end{array} \right.
\end{aligned}
\end{align*}
if $B_l> B_r>0$, $\Big[\frac{q}{v}\Big]=[B]<0$ and $q_+>0$. Since $\alpha_+\to+\infty$ by \eqref{alpha-to}, we have
\[
I_2 <0 \quad\mbox{for}~ v_+\ll 1.
\]
This is also true in the case of $B_l\le B_r<0$ because of $\Big[\frac{q}{v}\Big]>0$ and $q_+<0$.\\
Thus we have
\[
\frac{dF(R_{U_r}^{4,+}(v_+))}{dv_+} \to -\infty \quad\mbox{as}~v_+\to0+,
\]
which implies \eqref{positive-F}.\\
Therefore we conclude that $R_{U_r}^{4,+}$ intersects with $\Sigma_a$ for any $a\ge 1$, because $F_a(U_r)>0$ and $F_a(R_{U_r}^{4,+}(v_*))<F_1(R_{U_r}^{4,+}(v_*))<0$ for all $a\ge1$.\\

Hence for all $a>0$, the 2-shock wave $(U_l,U_r,\sigma_2)$ does not satisfies $a$-contraction property thank to Theorem \ref{thm-shock}.\\
Similarly we use the same arguments as above to show non-contraction for 3-shock wave $(\tilde{U}_l, \tilde{U}_r, \sigma_3)$ satisfying \eqref{3-B}. More precisely, we can show that the backward 1-rarefaction wave $R_{\tilde{U}_l}^{1,-}$ intersects with 
\[
\tilde{\Sigma}_a:=\{U~|~\eta(U|\tilde{U}_l)=a\eta(U|\tilde{U}_r) \}\quad \mbox{for any}~0<a\le 1,
\]
and the forward 4-rarefaction wave $R_{\tilde{U}_r}^{4,+}$ intersects with $\tilde{\Sigma}_a$ for any $a>1$. We omit the details.
\end{proof}

\subsection{Conditions via neighboring linearly degenerate fields}
The following criterion is on the case where there are linearly degenerate fields as neighboring families of the intermediate entropic shock. 
\begin{theorem}\label{thm-shock-2}
Consider the system \eqref{main} satisfying $(\mathcal{A}_1)$. Let $(u_l,u_r, \sigma_{l,r})$ be a given $i$-th the entropic shock such that $u_r=S_{u_l}^i(s_0)$, $\sigma_{u_l}(s_0)=\sigma_{l,r}$ for some $s_0>0$ and the conditions \eqref{Liu-1} and \eqref{Lax} are satisfied. Then the following statements holds.\\
\begin{itemize}
\item (1) For $0<a<1$, we assume that for some $j<i$, the $j$-th characteristic field is linearly degenerate, thus there exists the $j$-th Hugoniot curve $S_{u_l}^j(s)$ as the integral curve of the vector field $r_j$ with $S_{u_l}^j(0)=u_l$ such that the contact discontinuity $(u_l, S_{u_l}^j(s), \sigma_{u_l}^j(s))$ satisfies the Rankine-Hugoniot condition and
\beq\label{Liu-2}
\frac{d}{ds}\sigma_{u_l}^j (s) = 0,\quad \sigma_{u_l}^{j}(0)=\lambda_j(u_l).
\eeq
Moreover if there is $s_1>0$ such that
\beq\label{outer-0}
 \eta(S_{u_l}^j(s_1)|u_l) > a \eta(S_{u_l}^j(s_1)|u_r),
\eeq
then the entropic shock $(u_l,u_r, \sigma_{l,r})$ does not satisfy $a$-contraction.\\

\item (2) For $a>1$, we assume that for some $k>i$, the $k$-th characteristic field is linearly degenerate, thus there exists the $k$-th Hugoniot curve $S_{u_r}^k(s)$ as the integral curve of the vector field $r_k$ with $S_{u_r}^k(0)=u_r$ such that the contact discontinuity $(S_{u_r}^k(s), u_r, \sigma_{u_r}^k(s))$ satisfies the Rankine-Hugoniot condition and
\beq\label{Liu-3}
\frac{d}{ds}\sigma_{u_r}^k (s) = 0,\quad \sigma_{u_r}^{k}(0)=\lambda_k(u_r).
\eeq
Moreover if there is $s_2>0$ such that
\beq\label{inner-0}
 \eta(S_{u_r}^k(s_2)|u_l) < a \eta(S_{u_r}^k(s_2)|u_r),
\eeq
then the entropic shock $(u_l,u_r, \sigma_{l,r})$ does not satisfy $a$-contraction.\\

\item  (3) For $a=1$, we assume that one of the assumptions of (1) and (2) is satisfied. Then, the entropic shock $(u_l,u_r, \sigma_{l,r})$ does not satisfy $a$-contraction.\\
\end{itemize}
\end{theorem}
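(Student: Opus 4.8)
The plan is to show that under the stated hypotheses the shock $(u_l,u_r,\sigma_{l,r})$ fails to be \emph{a-RES}, so that the absence of $a$-contraction follows at once from the necessary condition of Theorem \ref{thm-cont-2}. In contrast to Theorem \ref{thm-shock}, where one exhibits a point of $\Sigma_a$ violating $(\mathcal{H}1)$, here the neighboring linearly degenerate field supplies a genuine entropic discontinuity, so I would violate $(\mathcal{H}2)$ directly, using the contact discontinuity itself as the test pair $(u_-,u_+,\sigma_{\pm})$.

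For case (1), I would take $u_-=u_l$, $u_+=S_{u_l}^j(s_1)$ and $\sigma_{\pm}=\lambda_j(u_l)$. Since the $j$-th field is linearly degenerate, \eqref{Liu-2} gives $\sigma_{u_l}^j(s)\equiv\lambda_j(u_l)$, and the contact discontinuity conserves entropy, so $(u_-,u_+,\sigma_{\pm})$ is admissible. The sign requirements of $(\mathcal{H}2)$ hold because $\eta(u_-|u_l)=\eta(u_l|u_l)=0<a\eta(u_l|u_r)$, while $\eta(u_+|u_l)>a\eta(u_+|u_r)$ is exactly \eqref{outer-0}; thus $u_-$ and $u_+$ lie on opposite sides of $\Sigma_a$. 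Using $u_-=u_l$ to annihilate the $q(u_-|u_l)$ and $\eta(u_-|u_l)$ terms, $D_{RH}(u_{l,r},u_{\pm})$ collapses to $a\bigl(q(S_{u_l}^j(s_1)|u_r)-\lambda_j(u_l)\,\eta(S_{u_l}^j(s_1)|u_r)\bigr)$. Applying \eqref{vasseur} along the $j$-contact curve, where $\frac{d}{dt}\sigma_{u_l}^j\equiv 0$ kills the integral, this equals $a\bigl(q(u_l|u_r)-\lambda_j(u_l)\,\eta(u_l|u_r)\bigr)$, independently of $s_1$.

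The crux is the positivity of that quantity. I would split it as $\bigl(q(u_l|u_r)-\sigma_{l,r}\eta(u_l|u_r)\bigr)+(\sigma_{l,r}-\lambda_j(u_l))\,\eta(u_l|u_r)$: the first bracket is strictly positive by applying \eqref{vasseur} to the $i$-shock together with the Liu condition \eqref{Liu-1}, exactly as in \eqref{since-1}, and the second term is nonnegative since \eqref{Lax} forces $\sigma_{l,r}\ge\lambda_{i-1}(u_l)\ge\lambda_j(u_l)$ for $j<i$ while $\eta(u_l|u_r)\ge 0$. Hence $D_{RH}>0$, $(\mathcal{H}2)$ fails, and the shock is not \emph{a-RES}. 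Case (2) I would treat dually with $u_-=S_{u_r}^k(s_2)$, $u_+=u_r$, $\sigma_{\pm}=\lambda_k(u_r)$, so that $\eta(u_+|u_l)=\eta(u_r|u_l)>0=a\eta(u_r|u_r)$ and the other sign is \eqref{inner-0}; killing the $u_+=u_r$ terms reduces $D_{RH}$ to $-\bigl(q(S_{u_r}^k(s_2)|u_l)-\lambda_k(u_r)\eta(S_{u_r}^k(s_2)|u_l)\bigr)$, which by \eqref{vasseur} along the linearly degenerate $k$-curve equals $-\bigl(q(u_r|u_l)-\lambda_k(u_r)\eta(u_r|u_l)\bigr)$. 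Applying \eqref{vasseur} to the $i$-shock with $v=u_l$ yields $q(u_r|u_l)-\sigma_{l,r}\eta(u_r|u_l)=\int_0^{s_0}\frac{d}{dt}\sigma^i_{u_l}(t)\,\eta(u_l|S_{u_l}^i(t))\,dt<0$ by \eqref{Liu-1}, and \eqref{Lax} gives $\lambda_k(u_r)\ge\lambda_{i+1}(u_r)\ge\sigma_{l,r}$ for $k>i$, so $q(u_r|u_l)-\lambda_k(u_r)\eta(u_r|u_l)<0$ and again $D_{RH}>0$.

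Case (3) requires no new work: neither argument uses the magnitude of $a$, only the sign conditions \eqref{outer-0}/\eqref{inner-0}, so at $a=1$ either hypothesis suffices. I expect the main obstacle to be purely organizational, namely lining up the two uses of \eqref{vasseur} — one along the linearly degenerate curve to eliminate $s_1,s_2$, one along the $i$-shock to fix the sign — with the correct Lax inequalities, and verifying the orientation of $(u_-,u_+)$ relative to $\Sigma_a$ so that the chosen contact discontinuity genuinely qualifies for $(\mathcal{H}2)$.
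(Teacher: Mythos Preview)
Your proposal is correct and follows essentially the same approach as the paper: both arguments violate $(\mathcal{H}2)$ by using the contact discontinuity itself as the test pair $(u_-,u_+,\sigma_{\pm})$, and both establish $D_{RH}>0$ via \eqref{vasseur} applied once along the linearly degenerate curve and once along the $i$-shock, combined with the Lax ordering \eqref{Lax}. The only difference is cosmetic: you apply \eqref{vasseur} directly with $v=u_r$ (resp.\ $v=u_l$) to collapse $D_{RH}$ in one step, whereas the paper first inserts the triangle identity \eqref{metric} and then shows two of the three resulting pieces vanish --- your route is slightly more economical but the content is identical.
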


\begin{remark}
For example, the condition \eqref{outer-0} (resp. \eqref{inner-0}) is geometrically satisfied by a case that $C^1$ curve $S_{u_l}^j(s)$ (resp. $S_{u_r}^k(s)$) transversally intersects with the $(n-1)$-dimensional surface $\Sigma_a$.
\end{remark}
{\bf Proof of Theorem \ref{thm-shock-2}} 
{\bf(1) Case of $0<a<1$ :}
Let us put $u_-:=u_l$, $u_+ :=  S_{u_-}^j(s_1)$ and $\sigma_{\pm}:= \sigma_{u_-}^{j}(s_1)$, then the contact discontinuity $(u_-,u_+,\sigma_{\pm})$ satisfies
\[
\eta(u_-|u_l) =0 < a \eta(u_-|u_r)\quad \mbox{and}\quad\eta(u_+|u_l) > a \eta(u_+|u_r).
\]
We are going to show that the shock $(u_l,u_r, \sigma_{l,r})$ does not satisfies the second condition $(\mathcal{H} 2)$ of {\it{a-RES}} as
\begin{align*}
\begin{aligned}
D_{RH}(u_{l,r};u_{\pm}) := a q(u_+,u_r) - q(u_-,u_l) - \sigma_{\pm} ( a\eta(u_+|u_r) -\eta(u_-,u_l)) >0.
\end{aligned} 
\end{align*}
In fact, since $q(u_-,u_l)=\eta(u_-|u_l)=0$ by $u_-=u_l$, we have
\[
D_{RH}(u_{l,r};u_{\pm}) = a \Big(q(u_+,u_r) - \sigma_{\pm} \eta(u_+|u_r) \Big).
\]
We use the identity \eqref{metric} and the Rankine-Hugoniot condition for the discontinuity $(u_-,u_+,\sigma_{\pm})$, to get 
\begin{align*}
\begin{aligned}
D_{RH}(u_{l,r};u_{\pm}) &= a \Big[ q(u_+,u_l) + q(u_l,u_r) + (d\eta(u_l)-d\eta(u_r)) \cdot(f(u_+)-f(u_l)) \\
&\quad-\sigma_{\pm} \Big(\eta(u_+|u_l) + \eta(u_l|u_r) + (d\eta(u_l)-d\eta(u_r)) \cdot(u_+ -u_l) \Big) \Big]\\
&= a \Big[ q(u_+,u_l) + q(u_l,u_r)-\sigma_{\pm} \Big(\eta(u_+|u_l) + \eta(u_l|u_r) \Big) \Big],
\end{aligned}
\end{align*} 
where the fact $u_-=u_l$ is used above.\\
We decompose the above relation into three parts by
\begin{align*}
\begin{aligned}
D_{RH}(u_{l,r};u_{\pm}) &= a \Big( q(u_l,u_r) -\lambda_{j}(u_l) \eta(u_l|u_r) \Big) +a(\lambda_{j}(u_l) -\sigma_{\pm})\eta(u_l|u_r) \\
&\quad + a \Big( q(u_+,u_l) -\sigma_{\pm} \eta(u_+|u_l) \Big)\\
&=: I_1 +I_2 +I_3.
\end{aligned}
\end{align*} 
Following the same argument as \eqref{since-1} by using \eqref{vasseur} and the fact $\lambda_j(u_l)<\lambda_i(u_l)$, we have
\begin{align*}
\begin{aligned}
I_1=a \Big[\Big( q(u_l,u_r) -\sigma_{l,r}\eta(u_l|u_r) \Big) +(\sigma_{l,r} - \lambda_{j}(u_l)) \eta(u_l|u_r) \Big] >0.
\end{aligned}
\end{align*} 
Since $\lambda_{j}(u_l) =\sigma_{u_l}^j(0)=\sigma_{u_l}^j(s_1)=\sigma_{\pm}$ by \eqref{Liu-2}, we have
\[
I_2 = a(\lambda_{j}(u_l) -\sigma_{\pm})\eta(u_l|u_r)=0.
\]
We use \eqref{vasseur} together with \eqref{Liu-2} to get
\[
I_3 = a \Big( q(u_+,u_-) -\sigma_{\pm} \eta(u_+|u_-)\Big) =a \int_0^{s_1} \frac{d}{ds}\sigma_{u_-}^j(s)\eta(u_-|S_{u_-}^j(s))ds =0.
\]
Therefore we have 
\[D_{RH}(u_{l,r};u_{\pm}) >0.\]
Hence the shock $(u_l,u_r, \sigma_{l,r})$ is not {\it{a-RES}}, which provides the conclusion by Theorem \ref{thm-cont-2}.\\
{\bf(2) Case of $a>1$ :} We follow the same argument as above by considering $S_{u_r}^{k}$ instead of $S_{u_l}^{j}$. By \eqref{inner-0}, we have 
\[
\eta(u_-|u_l)  < a \eta(u_-|u_r)\quad \mbox{and}\quad\eta(u_+|u_l) > 0=a \eta(u_+|u_r).
\]
for the contact discontinuity $(u_-,u_+,\sigma_{\pm})$ where $u_+:=u_r$, $u_- :=  S_{u_+}^k(s_2)$ and $\sigma_{\pm}:= \sigma_{u_+}^{k}(s_2)$.\\
Since $q(u_+,u_r)=\eta(u_+,u_r)=0$ by $u_+=u_r$, we have
\[
D_{RH}(u_{l,r};u_{\pm}) = - q(u_-,u_l) + \sigma_{\pm} \eta(u_-,u_l).
\]
Using \eqref{metric} and the Rankine-Hugoniot condition of the discontinuity $(u_-,u_+,\sigma_{\pm})$, we have
\begin{align*}
\begin{aligned}
D_{RH}(u_{l,r};u_{\pm}) = - q(u_-,u_+) - q(u_r,u_l)+\sigma_{\pm} \Big(\eta(u_-|u_+) + \eta(u_r|u_l) \Big) ,
\end{aligned}
\end{align*} 
where the fact $u_+=u_r$ is used above.\\
We decompose the above relation into three parts by
\begin{align*}
\begin{aligned}
D_{RH}(u_{l,r};u_{\pm}) &= - \Big( q(u_r,u_l) -\lambda_{k}(u_r) \eta(u_r|u_l) \Big) +(\sigma_{\pm}-\lambda_{k}(u_r) )
\eta(u_r|u_l) \\
&\quad - \Big( q(u_-,u_+) -\sigma_{\pm} \eta(u_-|u_+) \Big)\\
&=: J_1 +J_2 +J_3.
\end{aligned}
\end{align*} 
By the same argument as the first case, we have
\begin{align*}
\begin{aligned}
J_1=- \Big( q(u_r,u_l) -\sigma_{l,r}\eta(u_r,u_l) \Big) +( \lambda_{k}(u_r)-\sigma_{l,r} ) \eta(u_r,u_l)  >0.
\end{aligned}
\end{align*} 
Since $\lambda_{k}(u_r) =\sigma_{u_r}^k(0)=\sigma_{u_r}^k(s_2)=\sigma_{\pm}$ by \eqref{Liu-3}, we have $J_2=0$.\\
We use \eqref{vasseur} together with \eqref{Liu-3} to get
\[
J_3 =- \int_0^{s_2} \frac{d}{ds}\sigma_{u_+}^k(s)\eta(u_+|S_{u_+}^k(s))ds =0.
\]
Therefore we have 
\[D_{RH}(u_{l,r};u_{\pm}) >0.\]
{\bf(3) Case of $a=1$ :} Since the proof of two cases above does not depend on the strength of $a$, if one of the assumptions of (1) and (2) is satisfied, we end up with no $a$-contraction of the entropic shock $(u_l,u_r, \sigma_{l,r})$.

\section{Criteria preventing  $a$-contractions for intermediate contact discontinuities}\label{no-cont-inter}
In this section, as an application of Theorem \ref{thm-cont-2}, we consider some sufficient conditions, which provides no $a$-contraction for intermediate contact discontinuities. Then we apply the sufficient condition to the full Euler system in order to find a range of weights $a$, on which that intermediate contact discontinuities do not satisfy $a$-contraction. 

\subsection{Condition preventing $a$-contractions for intermediate contact discontinuities}
In this part, we consider the case that the system \eqref{main} has a genuinely nonlinear field, which has same conditions as Theorem \ref{thm-shock}. In fact, the proof of Theorem \ref{thm-shock} still holds in the case of the contact discontinuity $(u_l,u_r, \sigma_{l,r})$ instead of shock. Thus we state the following theorem without proof.
\bigskip

\begin{itemize}
\item $(\mathcal{A}_2)$ : Suppose that for some $1< i < n$, the $i$-th characteristic field $(\lambda_i, r_i)$ is linearly degenerate. Then, for a given $i$-th contact discontinuity $(u_l,u_r,\sigma_{l,r})$, there exists the $i$-th Hugoniot curve $S_{u_l}^i(s)$ as the integral curve
of the vector field $r_i$ such that
$S_{u_l}^i(0)=u_l$ and $S_{u_l}^i(s_0)=u_r$ for some $s_0>0$, and the Rankine-Hugoniot condition:
\[f(S_{u_l}^i(s)) -f(u_l) = \sigma_{u_l}^i(s) (S_{u_l}^i(s) - u_l).\]
where $\sigma_{u_l}^i(s)$ is a velocity function satisfying the Liu entropy condition:
\beq\label{Liu}
\frac{d}{ds}\sigma_{u_l}^{i}(s) = 0, \quad 0\le s\le s_0,
\eeq
and $\sigma_{u_l}^i(0)=\lambda_i(u_l)$, $\sigma_{u_l}^i(s_0)=\sigma_{l,r}$.\\
We also suppose the extended Lax condition:
\beq\label{Lax-00}
\lambda_{l-1} (u_-) \le \sigma \le \lambda_{l+1} (u_+),\quad \mbox{for any $l$-th shock $(u_-,u_+,\sigma)$}.
\eeq
\end{itemize}

\begin{theorem}\label{thm-contact-2}
Consider the system \eqref{main} satisfying $(\mathcal{A}_2)$ and \eqref{Lax-00}. Let $(u_l,u_r, \sigma_{l,r})$ be a given $i$-th intermediate contact discontinuity (i.e., $1<i<n$) such that $u_r=S_{u_l}^i(s_0)$, $\sigma_{u_l}^i(s_0)=\sigma_{l,r}$ for some $s_0>0$. Then the following statements holds.\\
\begin{itemize}
\item (1) For $0<a<1$, we assume that there is a $C^1$ $j$-th rarefaction curve $R_{u_l}^j (s)$ with $j<i$ such that $R_{u_l}^j(0)=u_l$ and the backward curve $R_{u_l}^{j,-} (s)$ of $R_{u_l}^j (s)$, i.e., $\lambda_j (R_{u_l}^{j,-} (s)) < \lambda_j (u_l) $, intersects with the $(n-1)$-dimensional surface $\Sigma_a$.
Then, the contact discontinuity $(u_l,u_r, \sigma_{l,r})$ does not satisfy $a$-contraction.\\

\item (2) For $a>1$, we assume that there is a $C^1$ $k$-th rarefaction curve $R_{u_r}^k (s)$ with $k>i$ such that $R_{u_r}^k(0)=u_r$ and the forward curve $R_{u_r}^{k,+} (s)$ of $R_{u_r}^k (s)$, i.e., $\lambda_k (R_{u_r}^{k,+} (s)) > \lambda_k (u_r) $, intersects with the $(n-1)$-dimensional surface $\Sigma_a$.
Then, the contact discontinuity $(u_l,u_r, \sigma_{l,r})$ does not satisfy $a$-contraction.\\

\item  (3) For $a=1$, we assume that one of the assumptions of (1) and (2) is satisfied. Then, the contact discontinuity $(u_l,u_r, \sigma_{l,r})$ does not satisfy $a$-contraction.\\
\end{itemize}
\end{theorem}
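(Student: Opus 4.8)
The plan is to show that under either hypothesis the discontinuity fails condition $(\mathcal{H}1)$ of Definition \ref{def-res}, and then to invoke Theorem \ref{thm-cont-2}, which states that $a$-RES is \emph{necessary} for $a$-contraction. The argument is a direct transcription of the proof of Theorem \ref{thm-shock}: the rarefaction-curve machinery there never used that the intermediate wave was a genuinely nonlinear shock, only that it had a definite speed and satisfied an entropy inequality. The single structural change is that the intermediate wave is now a contact discontinuity obeying \eqref{Liu}, $\frac{d}{ds}\sigma^i_{u_l}\equiv 0$, in place of \eqref{Liu-1}, and this change affects exactly one line of the estimate, as explained below.

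Concretely, for case (1) with $0<a<1$, I would let $\bar u=R_{u_l}^{j,-}(\bar s)$ be the first intersection of the backward $j$-rarefaction curve with $\Sigma_a$, and introduce the scalar function $F(s)$ exactly as in \eqref{function}. Since $\bar u\in\Sigma_a$ forces $\eta(\bar u\,|\,u_l)=a\eta(\bar u\,|\,u_r)$, the two relative-entropy terms in $F(\bar s)$ cancel the speed factor and one gets $F(\bar s)=D_{sm}(u_{l,r};\bar u)$. It then suffices to prove $F(0)>0$ and $F'(s)>0$ on $(0,\bar s)$. Splitting $F(0)$ as in \eqref{f-zero},
\[
F(0)=a\bigl(q(u_l,u_r)-\sigma_{l,r}\eta(u_l\,|\,u_r)\bigr)+a\bigl(\sigma_{l,r}-\lambda_j(u_l)\bigr)\eta(u_l\,|\,u_r),
\]
I apply the identity \eqref{vasseur} as in \eqref{since-1}: here is the one genuine difference from Theorem \ref{thm-shock}. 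For a contact discontinuity $\frac{d}{ds}\sigma^i_{u_l}\equiv 0$, so the dissipation integral vanishes and the first bracket is now \emph{zero} rather than strictly positive. Positivity of $F(0)$ must therefore come entirely from the second term: by \eqref{Liu} the contact speed is $\sigma_{l,r}=\sigma^i_{u_l}(s_0)=\lambda_i(u_l)$, and strict hyperbolicity gives $\lambda_j(u_l)<\lambda_i(u_l)$ for $j<i$, so $a(\sigma_{l,r}-\lambda_j(u_l))\eta(u_l\,|\,u_r)>0$ and hence $F(0)>0$.

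The derivative computation \eqref{f-prime} carries over verbatim, since it uses only that $\frac{d}{ds}R_{u_l}^{j,-}=r_j$ and $\nabla f\,r_j=\lambda_j r_j$, yielding
\[
F'(s)=\tfrac{d}{ds}\lambda_j\bigl(R_{u_l}^{j,-}(s)\bigr)\Bigl(\eta\bigl(R_{u_l}^{j,-}(s)\,|\,u_l\bigr)-a\,\eta\bigl(R_{u_l}^{j,-}(s)\,|\,u_r\bigr)\Bigr),
\]
where the first factor is negative (genuine nonlinearity of the $j$-field along the backward curve) and the second is negative because $R_{u_l}^{j,-}(s)$ lies strictly inside $R_a$ up to the first crossing. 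Thus $F(\bar s)=D_{sm}(u_{l,r};\bar u)>0$, so $(\mathcal{H}1)$ fails and Theorem \ref{thm-cont-2} rules out $a$-contraction. Case (2) is the symmetric dual about $u_r$ using the forward $k$-rarefaction $R_{u_r}^{k,+}$, where again $\sigma_{l,r}=\lambda_i(u_r)<\lambda_k(u_r)$ supplies the needed sign; case (3) follows since none of the above used the magnitude of $a$. The main point to get right — the only place the contact-discontinuity hypothesis bites — is the verification of $F(0)>0$: unlike the shock case, the strictly dissipative Liu integral is absent, and one must extract positivity purely from the spectral gap $\lambda_i(u_l)-\lambda_j(u_l)>0$ together with the identity $\sigma_{l,r}=\lambda_i(u_l)$ furnished by \eqref{Liu}.
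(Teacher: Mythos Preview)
Your proposal is correct and matches the paper's own treatment: the paper simply states that ``the proof of Theorem \ref{thm-shock} still holds in the case of the contact discontinuity $(u_l,u_r,\sigma_{l,r})$ instead of shock'' and gives no separate argument, so your explicit transcription---with the one modification at the $F(0)$ step---is exactly what is intended. One small refinement: you invoke strict hyperbolicity to get $\lambda_j(u_l)<\lambda_i(u_l)$ and hence $F(0)>0$, but the paper does not assume strict hyperbolicity; fortunately only $F(0)\ge 0$ is needed (from the eigenvalue ordering $\lambda_j\le\lambda_i$), since $F'(s)>0$ on $(0,\bar s)$ already forces $F(\bar s)>F(0)\ge 0$.
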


{Application to gas dynamics}
As an application of Theorem \ref{thm-contact-2}, we here find weights $a>0$, on which the contact discontinuity of full Euler system for a perfect gas does not satisfy $a$-contraction. The $3\times 3$ full Euler system reads
\begin{align}
\begin{aligned}\label{euler}
\left\{ \begin{array}{ll}
       \partial_t \rho + \partial_x(\rho v) =0\\
        \partial_t (\rho v) + \partial_x (\rho v^2 + p) = 0\\
         \partial_t (\rho (\frac{1}{2}v^2 + e)) + \partial_x (\rho (\frac{1}{2}v^2 + e)v + pv) = 0, \end{array} \right.
\end{aligned}
\end{align}
Here, the equation of state for a perfect gas is given by
\beq\label{pressure}
p=(\gamma -1)\rho e,
\eeq
where $\gamma > 1$.\\ 
We consider the conservative variables $\rho$, $q:=\rho v$ and $\mathcal{E}:=\rho (\frac{1}{2}v^2 + e)$ and put $u:=(\rho, q, \mathcal{E})$, and the entropy
\beq\label{euler-ent}
\eta (u) = (\gamma -1) \rho \mbox{ln} \rho -\rho \mbox{ln} e,
\eeq
where $e$ is given by $e=\frac{\mathcal{E}}{\rho} - \frac{q^2}{2\rho^2}$ in conservative variables.\\
By a straightforward computation (See for example \cite{Serre_book}), the three characteristic fields are given as
\begin{align*}
\begin{aligned}
&\lambda_1=v-c,\quad  r_1=\Big(-\rho, c, -\rho^{-1}p \Big)^T,\\
&\lambda_2=v,\quad  r_2=\Big(-\partial_e p, 0, -\partial_{\rho}p \Big)^T,\\
&\lambda_3=v+c,\quad  r_3=\Big(\rho, c, \rho^{-1}p \Big)^T,
\end{aligned}
\end{align*}
where $c:=\sqrt{\gamma(\gamma-1)e}$ denotes the sound speed.\\
Thus since $d\lambda_i\cdot r_i>0$ for $i=1,3$ and $d\lambda_2\cdot r_2=0$, the first and third characteristic fields are genuinely non-linear, whereas the second characteristic field is linearly degenerate. Let $(u_l, u_r)$ be 2-contact discontinuity, then we have
\beq\label{second}
v_l = v_r,\quad p_l =p_r.
\eeq
We refer to \cite{Serre_book} for these relations.\\

\begin{theorem}\label{thm-euler}
Let $(u_l, u_r)$ be the 2-contact discontinuity for the system \eqref{euler}-\eqref{pressure}. Then, there is no weight $a$ for the range of either
\begin{align}
\begin{aligned}\label{a-range}
&0<a < \frac{e_r}{e_l}, ~a>1, \quad \mbox{when} ~e_l>e_r,\quad \mbox{or}\\
& a<1, ~a > \frac{e_r}{e_l},\quad \mbox{when} ~ e_l<e_r, 
\end{aligned} 
\end{align}
such that $(u_l,u_r)$ satisfies $a$-contraction.
\end{theorem}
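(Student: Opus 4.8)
The plan is to invoke Theorem \ref{thm-contact-2} for the $2$-contact discontinuity, so that $i=2$, $n=3$ and the requirement $1<i<n$ holds. As computed just above, $d\lambda_1\cdot r_1>0$ and $d\lambda_3\cdot r_3>0$, hence the first and third fields are genuinely nonlinear, and the neighboring families $j=1<i$ and $k=3>i$ required by parts (1) and (2) of Theorem \ref{thm-contact-2} are available. It therefore suffices to exhibit, for each $a$ in the stated ranges, an intersection of the appropriate rarefaction curve with $\Sigma_a=\{u~|~\eta(u|u_l)=a\eta(u|u_r)\}$; the absence of $a$-contraction is then immediate.

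Introduce $F_a(u):=\eta(u|u_l)-a\eta(u|u_r)$, so that $\Sigma_a=\{F_a=0\}$, and note $F_a(u_l)=-a\eta(u_l|u_r)<0$ while $F_a(u_r)=\eta(u_r|u_l)>0$. Expanding both relative entropies gives $F_a(u)=(1-a)\eta(u)-\big(\nabla\eta(u_l)-a\nabla\eta(u_r)\big)\cdot u+C$ for a constant $C$. Using the entropy \eqref{euler-ent} one finds $\partial_{\mathcal{E}}\eta(u)=-1/e$, so the $\mathcal{E}$-component of $\nabla\eta(u_l)-a\nabla\eta(u_r)$ equals $a/e_r-1/e_l=(ae_l-e_r)/(e_le_r)$. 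The sign of this single quantity, which reverses exactly at $a=e_r/e_l$, is what produces the threshold in \eqref{a-range}.

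For $0<a<1$ I would follow part (1) and trace $F_a$ along the backward $1$-rarefaction curve $R_{u_l}^{1,-}$. Since the physical entropy is constant along a $1$-rarefaction, parametrizing by $\rho$ gives $e=e_l(\rho/\rho_l)^{\gamma-1}$, whence $\eta=\rho\,\eta(u_l)/\rho_l$ stays linear in $\rho$, while $\mathcal{E}=\rho e+\rho v^2/2\sim\rho^{\gamma}$ and $q\sim\rho^{(\gamma+1)/2}$; moreover $\lambda_1=v-c$ decreases as $\rho$ grows, so the backward branch runs toward $\rho\to\infty$ and stays in the physical region $\rho,e>0$. Because $\gamma>1$ forces $\gamma>(\gamma+1)/2>1$, the energy coordinate dominates in $F_a$, giving $F_a\sim-\frac{ae_l-e_r}{e_le_r}\,\mathcal{E}$, hence $F_a\to+\infty$ precisely when $a<e_r/e_l$; together with $F_a(u_l)<0$ this forces a crossing of $\Sigma_a$ for $0<a<\min(1,e_r/e_l)$. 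Symmetrically, for $a>1$ I would apply part (2) to the forward $3$-rarefaction curve $R_{u_r}^{3,+}$, along which $\lambda_3=v+c$ increases and $\rho\to\infty$; the identical asymptotics give $F_a\to-\infty$ exactly when $a>e_r/e_l$, so with $F_a(u_r)>0$ one gets a crossing for $a>\max(1,e_r/e_l)$. Distinguishing $e_l>e_r$ (then $e_r/e_l<1$) from $e_l<e_r$ (then $e_r/e_l>1$) reproduces precisely the two lines of \eqref{a-range}.

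The main obstacle is the asymptotic bookkeeping along the rarefaction curves: one must verify that the energy coordinate $\mathcal{E}\sim\rho^{\gamma}$ genuinely outgrows the contributions $q\sim\rho^{(\gamma+1)/2}$ and $\eta\sim\rho$, so that the sign of $F_a$ at $\rho=\infty$ is controlled solely by $\sgn(e_r-ae_l)$, and that the curves indeed extend to $\rho=\infty$ without reaching vacuum. Once this is secured, the statement is a direct application of Theorem \ref{thm-contact-2}.
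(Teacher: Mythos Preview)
Your proposal is correct and follows the same global strategy as the paper: invoke Theorem~\ref{thm-contact-2} by showing that the backward $1$-rarefaction from $u_l$ (for the small-$a$ range) and the forward $3$-rarefaction from $u_r$ (for the large-$a$ range) must cross $\Sigma_a$. The difference lies only in how the crossing is established. The paper argues indirectly: it shows that when the coefficient $c_3=-\frac{1}{1-a}\bigl(\tfrac{a}{e_r}-\tfrac{1}{e_l}\bigr)$ is positive the sublevel set $\{\eta(\cdot|u_l)\le a\eta(\cdot|u_r)\}$ is bounded (handling the cross term $c_2 q$ via Young's inequality), and separately that the rarefaction curve is unbounded in $(\rho,\mathcal{E})$, forcing an exit. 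You instead compute the asymptotics of $F_a$ directly along the curve, using the isentropic relations $e=e_l(\rho/\rho_l)^{\gamma-1}$, $\eta\propto\rho$, $q\sim\rho^{(\gamma+1)/2}$, $\mathcal{E}\sim\rho^{\gamma}$, and the inequality $\gamma>(\gamma+1)/2>1$ to isolate the dominant $\mathcal{E}$-contribution. Your route is more explicit about the mechanism producing the threshold $a=e_r/e_l$, at the cost of the growth-rate comparison you flagged; the paper's route is slightly more robust since it only needs unboundedness of the curve, not the precise hierarchy of exponents.
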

\begin{proof}
We show that the assumptions of Theorem \ref{thm-contact-2} are satisfied for $a$ belonging to \eqref{a-range}. For that, we separate the range of weight $a>0$ into two cases.\\
{\bf i) Case of either $0<a < \frac{e_r}{e_l}$ when $e_l>e_r$ or $a<1$ when $e_l<e_r$}.
For such $a$, we are going to show that the first backward rarefaction wave $R_{u_l}^{1,-}$ issuing from $u_l$ intersects with the surface $\Sigma_a$.  
As in \cite{Serre_book}, it turned out that the first backward rarefaction wave $R_{u_l}^{1,-}$ starting from $u_l$ can be parametrized by the pressure $p_+$ as follows.
\begin{align}
\begin{aligned}\label{1-rare}
&R_{u_l}^{1,-}~\mbox{is parametrized as}~p_l \le p_+,\\
&\rho_+=\phi(p_+ ; \rho_l,p_l), \quad \mbox{where $\phi$ is increasing in the first argument},\\
& v_+ = v_l - \int_{p_l}^{p_+} \frac{dp}{\rho c},
\end{aligned} 
\end{align}
where $c$ is the speed of sound, in particular $c=\sqrt{\gamma(\gamma-1)e}$ for a perfect gas. That is, along $R_{u_l}^{1,-}$, the pressure $p_+$ and density $\rho_+$ increase while the velocity $v_+$ decrease. Then, using 
\beq\label{rho-c}
\rho c = \rho \sqrt{\gamma(\gamma-1)e} = \sqrt{\gamma\rho p} \quad\mbox{by \eqref{pressure} and \eqref{1-rare}}, 
\eeq
we have
\[
v_+ =v_l  -\frac{1}{\sqrt{\gamma}}\int_{p_l}^{p_+} \frac{dp}{\sqrt{\rho p}}.
\]
Multiplying by $\sqrt{\rho_+}$, we have
\begin{align*}
\begin{aligned}
\sqrt{\rho_+}v_+  &= \sqrt{\rho_+}v_l -\frac{1}{\sqrt{\gamma}}\int_{p_l}^{p_+} \sqrt{\frac{\rho_+}{\rho}}\frac{dp}{\sqrt{ p}}\\
&\le \sqrt{\rho_+}v_l -\frac{1}{\sqrt{\gamma}}\int_{p_l}^{p_+} \frac{dp}{\sqrt{ p}}\\
&=\sqrt{\rho_+}v_l -\frac{2}{\sqrt{\gamma}} (\sqrt{p_+}-\sqrt{p_l}),
\end{aligned} 
\end{align*}
where we have used $\rho_+ \ge \rho$ thanks to \eqref{1-rare}.\\
If $\rho_+$ is bounded in $p_+$, then we have
\[
\sqrt{\rho_+}v_+ \rightarrow -\infty\quad \mbox{as}~p_+\rightarrow \infty,
\]
which implies
\beq\label{energy-2}
\mbox{either}\quad\rho_+\rightarrow \infty\quad\mbox{or} \quad\mathcal{E}_+\rightarrow \infty\quad \mbox{as}~p_+\rightarrow \infty.
\eeq
This means that the either density or energy should be unbounded along the first backward rarefaction wave $R_{u_l}^{1,-}$ issuing from $u_l$.\\
On the other hand, it turns out that $u_l$ is enclosed by the surface $\Sigma_a$ in the case of either $0<a < \frac{e_r}{e_l}$ when $e_l>e_r$ or $a<1$ when $e_l<e_r$ as follows.\\
Recall \eqref{a-1}, i.e.,
\begin{align*}
\begin{aligned}
&\mbox{for}~a<1,\quad\eta(u|u_l) \le a\eta(u|u_r) \quad\mbox{is equivalent to}\\
&\eta(u) \le \frac{1}{1-a} (\eta(u_l)-a\eta(u_r) -\nabla \eta(u_l)\cdot u_l + a\nabla \eta(u_r)\cdot u_r +(\nabla \eta(u_l)-a\nabla \eta(u_r))\cdot u).
\end{aligned} 
\end{align*}
Since the entropy \eqref{euler-ent} can be rewritten as
\beq\label{ent-1}
\eta (u) = \gamma\rho\mbox{ln}\rho -\rho \mbox{ln}( \mathcal{E} -\frac{q^2}{2\rho}),
\eeq
using $\rho e = \mathcal{E} -\frac{q^2}{2\rho}$, we have
\begin{align}
\begin{aligned}\label{diff}
&\partial_{\rho}\eta (u) = \gamma (\mbox{ln}\rho  +1) -\mbox{ln}(\rho e) - \frac{v^2}{2e},  \\
&\partial_{q}\eta (u) = \frac{v}{e},\\ 
&\partial_{\mathcal{E}}\eta (u) = -\frac{1}{e}.
\end{aligned} 
\end{align}
We use \eqref{pressure}, \eqref{second} and \eqref{diff} to compute
\begin{align}
\begin{aligned}\label{nabla-eta}
(\nabla \eta(u_l)-a\nabla \eta(u_r))\cdot u &= (\partial_{\rho}\eta(u_l)-a\partial_{\rho}\eta(u_r)) \rho\\
 &\quad + (\partial_{q}\eta(u_l)-a\partial_{q}\eta(u_r)) q + \Big(\frac{a}{e_r}-\frac{1}{e_l}\Big)\mathcal{E}.
\end{aligned} 
\end{align}
By \eqref{a-1}, \eqref{ent-1} and \eqref{nabla-eta}, we have
\begin{align*}
\begin{aligned}
\Big(\gamma\mbox{ln}\rho - \mbox{ln}( \mathcal{E} -\frac{q^2}{2\rho}) - c_1 \Big)\rho+c_3\mathcal{E} \le c_2 q + c_4,
\end{aligned} 
\end{align*}
where 
\begin{align*}
\begin{aligned}
& c_1 = \frac{1}{1-a}(\partial_{\rho}\eta(u_l)-a\partial_{\rho}\eta(u_r)),\\
& c_2=\frac{1}{1-a}(\partial_{q}\eta(u_l)-a\partial_{q}\eta(u_r)),\\
&c_3=-\frac{1}{1-a} \Big(\frac{a}{e_r}-\frac{1}{e_l}\Big),\\
&c_4 =   \frac{1}{1-a} (\eta(u_l)-a\eta(u_r) -\nabla \eta(u_l)\cdot u_l + a\nabla \eta(u_r)\cdot u_r).
\end{aligned} 
\end{align*}
Since $c_3>0$ in the case of either $0<a < \frac{e_r}{e_l}$ when $e_l>e_r$ or $a<1$ when $e_l<e_r$, we use $c_2 q\le \frac{c_3 \rho v^2}{4}+ c_5\rho $ by Young's inequality, to get
\[
\underbrace{\Big(\gamma\mbox{ln}\rho - \mbox{ln}( \mathcal{E} -\frac{q^2}{2\rho}) - c_1-c_5 \Big)\rho+\frac{c_3}{2}\mathcal{E}}_{L} \le  c_4.  
\]
Notice that $c_4$ is constant while
\[
L\to \infty \quad \mbox{as} ~\mathcal{E} \to \infty~\mbox{or} ~\rho\to \infty,
\]
which implies that $\{u~|~\eta(u|u_l) \le a\eta(u|u_r)\}$ is bounded. Thus $u_l$ is enclosed by $\Sigma_a$.
Therefore by this and \eqref{energy-2}, we conclude that the first backward rarefaction wave $R_{u_l}^{1,-}$ has to intersect with the $(n-1)$-dimensional surface $\Sigma_a$.\\

 {\bf ii) Case of either $a>1$ when $e_l>e_r$ or $a > \frac{e_r}{e_l}$ when $e_l<e_r$}.
For such $a$, we use the same argument as previous to show that the third forward rarefaction wave $R_{u_r}^{3,+}$ issuing from $u_r$ intersects with the surface $\Sigma_a$. Since the third forward rarefaction wave $R_{u_r}^{3,+}$ starting from $u_r$ can be parametrized by the pressure $p_+$ as:
\begin{align}
\begin{aligned}\label{3-rare}
&R_{u_r}^{3,+}~\mbox{is parametrized as}~p_r \le p_+,\\
&\rho_+=\phi(p_+ ; \rho_r,p_r), \quad \mbox{where $\phi$ is increasing in the first argument},\\
& v_+ = v_r + \int_{p_r}^{p_+} \frac{dp}{\rho c},
\end{aligned} 
\end{align}
the pressure $p_+$, density $\rho_+$ and the velocity $v_+$ all increase along $R_{u_r}^{3,+}$.
Then by \eqref{rho-c}, 
\[
v_+ - v_r = \frac{1}{\sqrt{\gamma}}\int_{p_r}^{p_+} \frac{dp}{\sqrt{\rho p}}.
\]
Multiplying by $\sqrt{\rho_+}$, we have
\begin{align*}
\begin{aligned}
\sqrt{\rho_+}v_+ &=\sqrt{\rho_+}v_r + \frac{1}{\sqrt{\gamma}}\int_{p_r}^{p_+} \sqrt{\frac{\rho_+}{\rho}}\frac{dp}{\sqrt{ p}}\\
&\ge \sqrt{\rho_+}v_r +  \frac{1}{\sqrt{\gamma}}\int_{p_r}^{p_+} \frac{dp}{\sqrt{ p}}\\
&=\sqrt{\rho_+}v_r +\frac{2}{\sqrt{\gamma}} (\sqrt{p_+}-\sqrt{p_r}),
\end{aligned} 
\end{align*}
where we have used $\rho_+ \ge \rho$ thanks to \eqref{3-rare}. If $\rho_+$ is bounded in $p_+$, then we have
\[
\sqrt{\rho_+}v_+ \rightarrow \infty\quad \mbox{as}~p_+\rightarrow \infty,
\]
which implies
\beq\label{energy-3}
\mbox{either}\quad\rho_+\rightarrow \infty\quad\mbox{or} \quad\mathcal{E}_+\rightarrow \infty\quad \mbox{as}~p_+\rightarrow \infty.
\eeq
This means that the either density or energy should be unbounded along the third forward rarefaction wave $R_{u_r}^{3,+}$ issuing from $u_r$.\\
On the other hand, by the same computation as previous case, $u_r$ is enclosed by $\Sigma_a$. Indeed, for $a>1$, $\eta(u|u_l) \ge a\eta(u|u_r)$ is equivalent to \eqref{a-1}, besides $c_3>0$ in the case of either $a>1$ when $e_l>e_r$ or $a > \frac{e_r}{e_l}$ when $e_l<e_r$. Therefore by this and \eqref{energy-3}, we conclude that the first backward rarefaction wave $R_{u_r}^{3,+}$ has to intersect with $\Sigma_a$.
\end{proof}

\begin{remark}\label{remark-serre}
Recently in \cite{Serre-Vasseur_2015}, the authors has showed that the 2-contact discontinuity of full Euler system \eqref{euler} satisfies $a$-contraction for the specific weight $a=\frac{e_r}{e_l}$ in the borderline of weights as in \eqref{a-range}. More precisely, Euler system is written in Lagrangian mass coordinates is considered and it turns out that the 2-contact discontinuity $(u_l,u_r)$ satisfies $\frac{\theta_r}{\theta_l}$-contraction, where $\theta_l$ and $\theta_r$ denote the temperature corresponding to the left end state $u_l$ and right one $u_r$, respectively. In fact, since $e=c\theta$ for some positive constant $c$ for the perfect gas, the weight $a=\frac{\theta_r}{\theta_l}$ is equal to $a=\frac{e_r}{e_l}$, moreover, since it is known that the $a$-contraction property is invariant under a transformation between the Eulerian coordinates and Lagrangian coordinates in \cite{Serre-Vasseur} (See also \cite{Serre-1991} and \cite{Wagner}), Serre's result also holds full Euler system \eqref{euler} in Eulerian coordinates. 
\end{remark}

\section{Appendix}
We here give the proof of \eqref{delta-k} in Lemma \ref{lem-v1}. Since $\frac{d}{ds}\sigma_{u}^1(s)$ and $\frac{d}{ds}\eta(u|S_{u}^1(s))$ are both continuous and non zero at $s=s_0$ by \eqref{Liu-0}, there exists $\delta\in(0,s_0)$ such that for all $s$ with $|s-s_0|<\delta$,
\begin{align}
\begin{aligned}\label{help}
&\Big|\frac{d}{ds}\sigma_{u}^1(s)-\frac{d}{ds}\sigma_{u}^1(s_0)\Big| \le \frac{1}{2}\Big|\frac{d}{ds}\sigma_{u}^1(s_0)\Big|,\\
&\Big|\frac{d}{ds}\eta(u|S_{u}^1(s))-\frac{d}{ds}\eta(u|S_{u}^1(s_0)) \Big| \le \frac{1}{2} \Big| \frac{d}{ds}\eta(u|S_{u}^1(s_0)) \Big|,
\end{aligned} 
\end{align} 
which can be estimates as
\begin{align}
\begin{aligned}\label{help-0}
&\frac{d}{dt}\sigma^1_u(s) = - \Big|\frac{d}{dt}\sigma^1_u(s)\Big| \le \frac{1}{2} \Big|\frac{d}{dt}\sigma^1_u(s_0)\Big|,\\
&\frac{d}{ds}\eta(u|S_{u}^1(s))=\Big|\frac{d}{ds}\eta(u|S_{u}^1(s))\Big| \ge \frac{1}{2} \Big|\frac{d}{ds}\eta(u|S_{u}^1(s_0))\Big|.
\end{aligned} 
\end{align} 
Thus, using \eqref{vasseur-1} and \eqref{help-0}, we have that for all $s$ with $|s-s_0|<\delta$,
\begin{align*}
\begin{aligned}
q(S_u^1(s),S_u^1(s_0))   -  \sigma_u^1(s)  \eta(S_u^1(s)|S_u^1(s_0)) &= \int^s_{s_0} \frac{d}{dt}\sigma^1_u(t)\Big( \eta(u|S_u^1(t))-\eta(u|S_u^1(s_0)) \Big) dt\\
&\le -\frac{1}{4}\Big|\frac{d}{dt}\sigma^1_u(s_0)\Big| \frac{d}{ds}\eta(u|S_{u}^1(s_0)) \int^s_{s_0}  (t-s_0) dt\\
&\le -\frac{1}{8}\Big|\frac{d}{dt}\sigma^1_u(s_0)\Big| \frac{d}{ds}\eta(u|S_{u}^1(s_0)) |s-s_0|^2.
\end{aligned} 
\end{align*} 
Since it follows from \eqref{help} that 
\[
\Big|\frac{d}{ds}\sigma_{u}^1(s)\Big| \le \frac{3}{2}\Big|\frac{d}{ds}\sigma_{u}^1(s_0)\Big|,
\]
we have
\[
|\sigma^1_u(s)-\sigma^1_u(s_0)| \le \frac{3}{2}\Big|\frac{d}{ds}\sigma_{u}^1(s_0)\Big| |s-s_0|,
\]
which yields that for all $s$ with $|s-s_0|<\delta$,
\begin{align*}
\begin{aligned}
q(S_u^1(s),S_u^1(s_0))   -  \sigma_u^1(s)  \eta(S_u^1(s)|S_u^1(s_0)) \le -k_1 |\sigma^1_u(s)-\sigma^1_u(s_0)|^2,
\end{aligned} 
\end{align*} 
where $k_1$ is a positive constant as
\[
k_1 =\min_{u\in B} \Big( \frac{1}{18}\Big|\frac{d}{ds}\sigma_{u}^1(s_0)\Big|^{-1}  \frac{d}{ds}\eta(u|S_{u}^1(s_0)) \Big).
\]
On the other hand, let us show \eqref{delta-k} for $|s-s_0|\ge\delta$. For all $s$ with $s\le s_0-\delta$, we use \eqref{vasseur-1} to get
\begin{align*}
\begin{aligned}
q(S_u^1(s),S_u^1(s_0))   -  \sigma_u^1(s)  \eta(S_u^1(s)|S_u^1(s_0)) &= \int_{s}^{s_0-\delta} \frac{d}{dt}\sigma^1_u(t)\Big( \eta(u|S_u^1(s_0))-\eta(u|S_u^1(t)) \Big) dt\\
&\quad +\int_{s_0-\delta}^{s_0} \frac{d}{dt}\sigma^1_u(t)\Big( \eta(u|S_u^1(s_0))-\eta(u|S_u^1(t)) \Big) dt\\
&=:I_1 + I_2.
\end{aligned} 
\end{align*} 
If we consider a positive constant $c_1$ satisfying
\beq\label{small-c1}
c_1 \le \min_{u\in B}  \Big(  \eta(u|S_u^1(s_0))-\eta(u|S_u^1(s_0-\delta)) \Big),
\eeq
using \eqref{Liu-0}, we have
\begin{align*}
\begin{aligned}
I_1 &\le \int_{s}^{s_0-\delta}  \frac{d}{dt}\sigma^1_u(t) \Big(  \eta(u|S_u^1(s_0))-\eta(u|S_u^1(s_0-\delta)) \Big) dt \\
&\le -c_1| \sigma(s_0-\delta)-\sigma(s) |\\
&\le -c_1| \sigma(s_0)-\sigma(s) | + c_1|\sigma(s_0)- \sigma(s_0-\delta) |.
\end{aligned} 
\end{align*} 
Since $I_2<0$ for all $u\in\mathcal{U}$, we choose $c_1$ sufficiently small such that \eqref{small-c1} and 
\[
c_1|\sigma(s_0)- \sigma(s_0-\delta) | \le -\min_{u\in B}I_2,
\]
which yields
\begin{align*}
\begin{aligned}
q(S_u^1(s),S_u^1(s_0))   -  \sigma_u^1(s)  \eta(S_u^1(s)|S_u^1(s_0)) \le -c_1| \sigma(s_0)-\sigma(s) |.
\end{aligned} 
\end{align*} 
Similarly, choosing $c_2>0$ sufficiently small such that
\begin{align*}
\begin{aligned}
&c_2 \le \min_{u\in B}  \Big(  \eta(u|S_u^1(s_0+\delta))-\eta(u|S_u^1(s_0)) \Big),\\
&c_2|\sigma(s_0)- \sigma(s_0+\delta) | \le -\min_{u\in B}\int_{s_0}^{s_0+\delta} \frac{d}{dt}\sigma^1_u(t)\Big(\eta(u|S_u^1(t))- \eta(u|S_u^1(s_0)) \Big) dt,
\end{aligned} 
\end{align*} 
we have that for all $s$ with $s\ge s_0+\delta$,
\begin{align*}
\begin{aligned}
q(S_u^1(s),S_u^1(s_0))   -  \sigma_u^1(s)  \eta(S_u^1(s)|S_u^1(s_0)) \le -c_2| \sigma(s_0)-\sigma(s) |.
\end{aligned} 
\end{align*}


\bibliography{Kang-Vasseur2015}
\end{document}